\renewcommand{\dim}{\text{\footnotesize\sf dim}}
\renewcommand{\max}{\text{\footnotesize\sf max}}
\newcommand{\cdim}{\text{\footnotesize\sf codim}}
\newcommand{\rank}{\text{\footnotesize\sf rank}}
\newcommand{\vf}{\vfill\end{document}}
\newcommand{\explain}[1]{\text{\scriptsize\sf [#1]}}
\newcommand{\dformal}{{\text{\usefont{T1}{qcs}{m}{sl}d}}}
\let\mathcal\mathscr
\newtheorem{The}{Theorem}[section]
\newtheorem{Theorem}[The]{Theorem}
\newtheorem{Proposition}[The]{Proposition}
\newtheorem{Observation}[The]{Observation}
\theoremstyle{definition}
\newtheorem{Definition}[The]{Definition}
\newtheorem{Remark}[The]{Remark}
\def\noqed{\let\QED@stack\@empty}
\definecolor{black}{cmyk}{1.,1.,1.,1.0
}
\definecolor{blue}{cmyk}{1.,1.,0.,0.63}
\definecolor{red}{cmyk}{0.,1.,1.,0.63}
\definecolor{green}{cmyk}{1.,0.,1.,0.63}
\begin{document}

\title{
Generalized Brotbek's 
\\
symmetric differential forms
\\ 
and applications
}

\author{Song-Yan Xie}

\thanks{This work was supported by the {\sl Fondation Math\'ematique Jacques Hadamard} through the grant  N\textsuperscript{o} ANR-10-CAMP-0151-02 within the ``Programme des Investissements d'Avenir''.}

\address{Laboratoire de Math\'ematiques d'Orsay,
 Univ. Paris-Sud, 
CNRS, Universit\'e Paris-Saclay, 
91405 Orsay, France.
 }

\email{songyan.xie@math.u-psud.fr}

\subjclass[2010]{14M10}

\keywords{
Debarre Ampleness Conjecture,
Cotangent bundle,  
Symmetric differential form, Moving Coefficients Method (MCM), Product coup, Almost proportional, Formal matrices}

\maketitle

\begin{abstract}
Over an algebraically closed field $\mathbb{K}$
with any characteristic, on
an $N$-dimensional smooth projective $\mathbb{K}$-variety $\mathbf{P}$ equipped with $c\geqslant N/2$ very ample line bundles $\mathcal{L}_1,\dots,\mathcal{L}_c$,
we study the {\sl General Debarre Ampleness Conjecture}, which expects that for all large degrees
$d_1,\dots,d_c\geqslant \texttt{d}\gg 1$,
for generic $c$ hypersurfaces
$
H_1\in \big|\mathcal{L}_1^{\,\otimes\,d_1}\big|$, $\dots$, 
$H_c\in \big|\mathcal{L}_c^{\,\otimes\,d_c}\big|$,
the complete intersection 
$X:=H_1 \cap \cdots \cap H_c$ 
 has
ample cotangent bundle $\Omega_X$.

First, we introduce a notion of {\sl formal matrices}
and a {\sl dividing device}
to produce negatively twisted symmetric differential forms, which extend the previous constructions of
Brotbek and the author.
Next, we adapt the moving coefficients method (MCM),
and we establish that, if $\mathcal{L}_1,\dots,\mathcal{L}_c$ are {\sl almost proportional} to each other, then the above conjecture holds true. 
Our method is effective:
for instance, in the simple case $\mathcal{L}_1=\cdots=\mathcal{L}_c$, we provide an explicit lower degree bound $\texttt{d}=N^{N^2}$. 
\end{abstract}

\section{\bf Introduction}
Smooth projective varieties having ample cotangent bundle 
suit well with the phenomenon\big/phil\-osophy that `{\em geometry governs arithmetic}', in the sense that,
on one hand, over the complex number field $\mathbb{C}$,
none of them contain any entire curve, on the other hand, over a number field $K$, each of them is expected to possess only finitely many $K$-rational points (Lang's conjecture). For instance in the one-dimensional case,
the first property is due to the Uniformization Theorem
and the Liouville's Theorem, while the second assertion
is the famous Mordell Conjecture\big/Faltings's Theorem.

For a long time, 
few such varieties were known, even though
they were expected to be reasonably abundant.
In this aspect, Debarre 
conjectured
in~\cite{Debarre-2005}
that the intersection of $c\geqslant N/2$ generic hypersurfaces of large degrees in $\mathbb{P}_{\mathbb{C}}^N$ should have ample cotangent bundle.

By introducing the moving coefficients method (MCM)
and the product coup,
the Debarre Ampleness Conjecture was first established in~\cite{Xie-2015-arxiv}, with an additional effective lower degree bound.

\begin{Theorem}[\cite{Xie-2015-arxiv}]
\label{Debarre Ampleness Conjecture}
 The cotangent bundle 
$\Omega_X$ 
 of the complete intersection 
$X:=H_1 \cap \cdots \cap H_c 
\subset\mathbb{ 
P}_{\mathbb{C}}^N$
 of $c\geqslant N/2$ generic hypersurfaces 
$H_1,\dots,H_c$ 
 with degrees $d_1,\dots,d_c\geqslant N^{N^2}$ is ample.
\end{Theorem}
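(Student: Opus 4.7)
The plan is to reduce the ampleness of $\Omega_X$ to the existence of many twisted symmetric differential forms with controlled base locus. Indeed, $\Omega_X$ is ample if and only if the tautological line bundle $\mathcal{O}_{\mathbb{P}(\Omega_X)}(1)$ on $\pi\colon\mathbb{P}(\Omega_X)\to X$ is ample, which amounts to producing, for some $m\gg 1$, a finite family
\[
\omega_1,\ldots,\omega_\ell\in H^0\bigl(X,\,S^m\Omega_X\otimes\mathcal{O}_X(-1)\bigr)
\]
whose common zero locus in $\mathbb{P}(\Omega_X)$ is empty. The problem therefore splits into two parts: constructing candidate sections and controlling their common base locus.

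For the construction I would use the generalized Brotbek symmetric forms announced in the paper. On a complete intersection $X=\{F_1=\cdots=F_c=0\}\subset\mathbb{P}^N_{\mathbb{C}}$, differentiating the relations $F_i\equiv 0$ along $X$ gives $dF_i|_X=0$, and iterating produces linear relations inside $S^m\Omega_X$. The formal matrices package these relations symbolically, while the dividing device clears the denominators introduced in intermediate steps and outputs honest global sections of $S^m\Omega_X\otimes\mathcal{O}_X(-a)$ for suitable $a>0$. Because many free parameters can be built into the formal matrices, one obtains a combinatorially rich family of forms indexed by auxiliary choices.

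To control the common base locus for generic hypersurfaces of degree $\geq N^{N^2}$, I would apply the moving coefficients method (MCM) together with the product coup, treating the coefficients of each $F_i$ as parameters. The strategy is to degenerate first to a highly structured configuration --- a Fermat-like choice equipped with a product stratification --- for which the base locus can be analyzed stratum by stratum in closed form. A semicontinuity argument then transfers the conclusion to a Zariski-open neighborhood in the parameter space of $(H_1,\ldots,H_c)$. The product coup is the decisive technical move: by multiplying carefully chosen sub-configurations one enlarges the supply of sections enough to cover the full stratification, so that the intersection of their zero loci in $\mathbb{P}(\Omega_X)$ shrinks to empty.

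The main obstacle I anticipate is precisely the base-locus control. The constructed forms vanish automatically along certain ``wrong'' strata --- typically coordinate subspaces where the defining polynomials degenerate --- and one must prove, stratum by stratum, that the MCM-deformed family collectively escapes each of them. Making this quantitative enough to extract the explicit bound $d_i\geq N^{N^2}$ requires a careful count of how many independent coefficient directions must be moved on each stratum; the combinatorial depth of this count --- roughly the number of nested strata times the number of formal matrix configurations --- is what pushes the degree threshold up to $N^{N^2}$.
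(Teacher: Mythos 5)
Your outline follows the same general blueprint as the paper (determinantal symmetric forms, specialization to a structured Fermat-type family, MCM, openness of ampleness in families), but as a proof it has two concrete gaps. First, the base-locus control, which you yourself identify as the main obstacle, is precisely the part you do not supply, and the criterion you set up is not the one the construction can verify: the method does not proceed by producing sections of $\mathsf{Sym}^m\Omega_X\otimes\mathcal{O}_X(-1)$ with empty common zero locus on $\mathbb{P}(\Omega_X)$. What it produces are forms of the fixed symmetric degree $n=N-c$ with various negative twists whose base locus is only \emph{discrete} over the coordinate-nonvanishing locus $\{\zeta_0\cdots\zeta_N\neq 0\}$, together with separate families of ``hidden'' forms of lower symmetric degree $n-\eta$ living only on each coordinate-vanishing stratum, plus a Bertini-type statement making the intersection of $X$ with at least $n$ coordinate hyperplanes finite; ampleness is then extracted by a nef-plus-ample argument on the relevant subscheme of $\mathbb{P}(\Omega_{\mathbf{P}})$, not by global generation. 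Moreover the generic discreteness is not a soft semicontinuity statement: it requires the specially designed moving-coefficient sections \eqref{F_i-moving-coefficient-method-full-strenghth} with exponents growing according to the Algorithm, and the quantitative Core Lemma $\cdim\,\mathscr{M}_{2c+r}^{N}\geqslant N+2c+r-1$ fed into an incidence-variety dimension count as in Proposition~\ref{why the core lemma is essential}; ``moving enough coefficient directions on each stratum'' names this step but does not prove it.

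Second, you misassign the product coup. It is not a device for enlarging the supply of forms to cover the stratification; it is the step that passes from the special degrees reachable by MCM to \emph{all} degrees $d_i\geqslant N^{N^2}$, independently in each $i$. Concretely, the MCM theorem (Theorem~\ref{general theorem of almost the same degree case}) only covers degrees of the form $d+1$ or $d+2$ with $d\geqslant\texttt{d}_0=N^{N^2/2}-1$; one then writes any $d_i\geqslant\widetilde{s}_i(\widetilde{s}_i-1)$ as $p_i\widetilde{s}_i+q_i(\widetilde{s}_i+1)$, takes $F_i$ to be a product of generic sections of the two special degrees, decomposes the corresponding subscheme of $\mathbb{P}(\Omega_{\mathbf{P}})$ into a union indexed by the choices of factors, and applies the special-degree theorem to every piece of the union. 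This is also where the exponent doubles: the Algorithm yields $\texttt{d}_0\approx N^{N^2/2}$, and the Frobenius-type threshold $\widetilde{s}(\widetilde{s}-1)\approx\texttt{d}_0(\texttt{d}_0+1)<N^{N^2}$ is what produces the stated bound. As written, your argument would at best give ampleness for the special degrees built by the MCM algorithm and offers no route to arbitrary $d_1,\dots,d_c\geqslant N^{N^2}$, nor an explanation of where that bound comes from.
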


The proof there extends the approach of~\cite{Brotbek-2014-arxiv}, by adding
four major ingredients as follows.

\begin{itemize}

\smallskip\item[\bf {(1)}]\,
Generalizations of Brotbek's symmetric differential forms~\cite[Lemma~4.5]{Brotbek-2014-arxiv}
by means of a geometric approach, and also by a
scheme-theoretic approach.

\smallskip\item[{\bf(2)}]\,
Make use of `hidden' symmetric differential forms constructed over any intersection of 
Fermat-type hypersurfaces with coordinate hyperplanes:
\[
\underbrace{H_1 \cap \cdots \cap H_c}
_{=\,X}\, 
\cap\,
\{z_{\nu_1}=\cdots=z_{\nu_\eta}=0\}
\qquad
{\scriptstyle(\forall\,
\eta\,=\,1\,\cdots\,N-c-1;\,\,
0\,\leqslant\,\nu_1\,<\,\cdots\,<\,\nu_\eta\,\leqslant\,N)}.
\]

\smallskip\item[{\bf(3)}]\,
`Flexible' hypersurfaces designed by MCM, 
which produce many more negatively twisted symmetric differential forms than pure Fermat-type ones.

\smallskip\item[{\bf(4)}]\,
The {\sl product coup}, which produces ample examples of all large degrees $d_1,\dots,d_c$.

\end{itemize}

\medskip
Recently, Brotbek and Darondeau~\cite{Brotbek-Lionel-2015} provided another approach to  the Debarre Ampleness Conjecture, by
means of
new constructions and
deep theorems in algebraic geometry.
As mentioned in~\cite[p.~2]{Brotbek-Lionel-2015},
 it is tempting to extend the Debarre Ampleness Conjecture from projective spaces to projective varieties, equipped with several very ample line bundles.   

\medskip
\noindent
{\bf General Debarre Ampleness Conjecture.}\,
For any smooth projective $\mathbb{K}$-variety $\mathbf{P}$ of dimension
$
N
\geqslant
1
$,
for any positive integer
$c\geqslant N/2$,
for any very ample line bundles $\mathcal{L}_1,\dots,\mathcal{L}_{c}$ over $\mathbf{P}$,
there exists some lower bound:
\[
\texttt{d}\,
=\,
\texttt{d}\,(\mathbf{P}, \mathcal{L}_1,\dots,\mathcal{L}_{c})\
\gg\
1
\] 
such that,
for all large degrees
$
d_1,
\dots,
d_c
\geqslant
\texttt{d}
$,
for $c$ generic
 hypersurfaces: 
\[H_1\in \big{|}\mathcal{L}_1^{\,\otimes\,d_1}\big|, \dots, 
H_c\in \big|\mathcal{L}_c^{\,\otimes\,d_c}\big|,
\]
the complete intersection 
$X:=H_1 \cap \cdots \cap H_c$ 
 has
ample cotangent bundle $\Omega_X$.

\medskip
Sharing the same flavor as~\cite[p.~6, Conjecture 1.5]{Xie-2015-arxiv},
this general conjecture attracts our interest.
To this aim, we develop further our previous method in~\cite{Xie-2015-arxiv}, and generalize
several results.

We work over an algebraically closed field $\mathbb{K}$
with any characteristic. 
First of all,
by adapting the techniques in~\cite{Xie-2015-arxiv}, we can confirm the General Debarre Ampleness Conjecture
in the case $\mathcal{L}_1=\cdots=\mathcal{L}_c=\mathcal{L}$.

\begin{Theorem}
\label{Gentle generalization of Debarre conjecture}
Let $\mathbf{P}$ be an $N$-dimensional smooth projective $\mathbb{K}$-variety, equipped with a very ample line bundle $\mathcal{L}$.
For any positive integer
$c\geqslant N/2$,
for all large degrees
$
d_1,
\dots,
d_c
\geqslant
N^{N^2}
$,
for $c$ generic
 hypersurfaces 
$H_1\in \big|\mathcal{L}_1^{\,\otimes\,d_1}\big|, \dots, 
H_c\in \big|\mathcal{L}_c^{\,\otimes\,d_c}\big|
$,
the complete intersection 
$X:=H_1 \cap \cdots \cap H_c$ 
 has
ample cotangent bundle $\Omega_X$.
\end{Theorem}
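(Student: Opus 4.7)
The plan is to transplant the four-ingredient strategy of Theorem~\ref{Debarre Ampleness Conjecture} from $\mathbb{P}^N_{\mathbb{C}}$ to the pair $(\mathbf{P},\mathcal{L})$. Since $\mathcal{L}$ is very ample, choosing a basis of $H^0(\mathbf{P},\mathcal{L})$ gives a closed immersion $\iota\colon\mathbf{P}\hookrightarrow\mathbb{P}^M$ with $\iota^{*}\mathcal{O}(1)=\mathcal{L}$; the corresponding global sections $\sigma_0,\dots,\sigma_M\in H^{0}(\mathbf{P},\mathcal{L})$ will substitute for the homogeneous coordinates $z_0,\dots,z_N$ used in the original construction. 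Accordingly, the Fermat-type hypersurfaces of degree $d_i$ are replaced by sections of $\mathcal{L}^{\otimes d_i}$ of the shape $\sum a_I\sigma^{I}$, the ``coordinate hyperplanes'' by the zero loci $\{\sigma_\nu=0\}\subset\mathbf{P}$, and the stratification of the ambient space by intersections $\{\sigma_{\nu_1}=\cdots=\sigma_{\nu_\eta}=0\}$.

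First, I would apply the generalized Brotbek construction together with the formal-matrix dividing device from the excerpt to build, for each Fermat-type choice of $H_1,\dots,H_c$, abundantly many negatively twisted symmetric differential forms on $X=H_1\cap\cdots\cap H_c$, together with the analogous \emph{hidden} symmetric forms living on every stratum $X\cap\{\sigma_{\nu_1}=\cdots=\sigma_{\nu_\eta}=0\}$, as in ingredient~(2). Next, I would invoke the moving coefficients method (MCM) to deform each $H_i$ coefficient-by-coefficient inside $|\mathcal{L}^{\otimes d_i}|$, propagating the existence of these negatively twisted forms to a Zariski-open subset of the parameter space $|\mathcal{L}^{\otimes d_1}|\times\cdots\times|\mathcal{L}^{\otimes d_c}|$, on every stratum simultaneously. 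Finally, the product coup matches arbitrary large degrees $d_1,\dots,d_c \geqslant N^{N^2}$ to a single reference degree. Ampleness of $\Omega_X$ then follows from the standard criterion that every positive-dimensional subvariety of $X$ must carry a nonzero symmetric differential form twisted by a negative power of the polarization.

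The main obstacle I anticipate is preserving the explicit bound $\texttt{d}=N^{N^2}$ in the new setting, where the number $M+1$ of sections of $\mathcal{L}$ may greatly exceed $N+1$. In the projective-space case, the dimension counts underlying the MCM and the non-vanishing of the constructed forms are driven by $N$ because the ambient space has exactly $N+1$ coordinates; here the same counts must be recovered by working at each stratum of $\mathbf{P}$ with only $N+1$ ``effective'' sections chosen among $\sigma_0,\dots,\sigma_M$, which is legitimate since $\dim\mathbf{P}=N$ allows any sufficiently generic $(N+1)$-tuple of sections to be treated as a local coordinate system. The delicate point is thus an intrinsic reinterpretation of the linear-algebra steps of~\cite{Xie-2015-arxiv} so that they depend only on $\dim\mathbf{P}$ and not on $\dim H^{0}(\mathbf{P},\mathcal{L})$; once this localization is carried out stratum by stratum, the effective bound transfers essentially verbatim, and the product coup delivers the full range $d_1,\dots,d_c\geqslant N^{N^2}$.
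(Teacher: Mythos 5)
Your overall route is the same as the paper's (generalized Brotbek forms via formal matrices and the dividing trick, hidden forms on coordinate strata, MCM, product coup), but the point you yourself flag as delicate is a genuine gap, and the repair you sketch would not work as stated. If the Fermat-type/MCM sections are written in all $M+1$ sections $\sigma_0,\dots,\sigma_M$ of the complete linear system, then the depth of the coordinate stratification and, crucially, the MCM Algorithm of Subsection~\ref{Subsection: Delayed Algorithm} (whose exponents $\mu_{l,k}$ grow exponentially in the number of coordinate sections) are governed by $M$, not by $N$, so the effective bound produced this way is of size roughly $M^{M^2}$ rather than $N^{N^2}$. Choosing $N+1$ ``effective'' sections \emph{stratum by stratum} cannot repair this: the hypersurfaces $F_i$ are fixed global sections built out of one definite set of coordinate sections, and the hidden symmetric forms of Proposition~\ref{the hidden general symmetric differential forms} exist only on the strata cut out by those very sections, so the coordinate system cannot be re-selected after the $F_i$ have been written down. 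The paper resolves this at the source and more simply: by Bertini it fixes, once and for all, $N+1$ simple normal crossing sections $\zeta_0,\dots,\zeta_N\in\mathsf{H}^0(\mathbf{P},\mathcal{L})$ (automatically with empty common base locus since $\dim\mathbf{P}=N$), never invokes the embedding by the complete linear system, and runs the entire construction of Subsection~\ref{subsection: Adaptation} only with these $\zeta_j$; then every count --- the Core Lemma, the base-locus control, the Algorithm --- involves $N$ alone, and the bound $\texttt{d}_0\,(\texttt{d}_0+1)<N^{N^2}$ with $\texttt{d}_0=N^{N^2/2}-1$ comes out exactly as in the projective-space case.

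A secondary imprecision: the product coup cannot run on ``a single reference degree'' (a single $s$ only reaches multiples of $s$); one needs the two consecutive degrees $\texttt{d}_0+1$ and $\texttt{d}_0+2$, and the fixed-degree input must then apply to intersections that mix factors of both degrees and contain more than $c$ factors, some of them appearing without their differentials. This is precisely why the paper proves Theorem~\ref{general theorem of almost the same degree case} for the restricted bundle $\Omega_V\big\vert_X$ with $2c+r\geqslant N$ and with the auxiliary twists $\mathcal{A}_i\otimes\mathcal{L}\otimes\mathcal{L}^{d}$, specialized in the pure case $\mathcal{L}_1=\cdots=\mathcal{L}_c=\mathcal{L}$ to $\mathcal{A}_i$ equal to the trivial bundle or to $\mathcal{L}$; without this strengthened, mixed-degree and $r>0$ form of the fixed-degree theorem, the pieces of the product-coup decomposition are not covered by the statement you propose to apply, even though the final statement of Theorem~\ref{Gentle generalization of Debarre conjecture} has $r=0$.
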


In fact, we will prove a stronger result, in the case that
the $c$ rays:
\[
\mathbb{R}_{+}\cdot [\mathcal{L}_1],\,
\dots,\,
\mathbb{R}_{+}\cdot [\mathcal{L}_c]\
\subset\
\text{Ample Cone of }\mathbf{P}
\]  
have small pairwise angles.
More rigorously, we introduce the

\begin{Definition}
\label{almost parallel}
Let $\mathbf{P}$ be an $N$-dimensional projective variety, and let $\mathcal{L}$, $\mathcal{S}$ be two ample line bundles on $\mathbf{P}$.
Then $\mathcal{S}$ is said to be {\sl almost proportional} to $\mathcal{L}$, if there exist
two elements $\alpha\in \mathbb{R}_{+}\cdot [\mathcal{S}]$
and $\beta\in \mathbb{R}_{+}\cdot [\mathcal{L}]$
such that $\beta<\alpha<(1+\epsilon_0)\,\beta$,
i.e. both $\alpha-\beta, (1+\epsilon_0)\,\beta-\alpha$ lie in the ample cone of $\mathbf{P}$,
where $\epsilon_0:=3/({N^{N^2/2}}-1)$.
\end{Definition}

The value $\epsilon_0$ is due to the effective degree estimates of MCM, see Proposition~\ref{explain the definition of proportional}.

\begin{Theorem}
\label{Main Theorem, 2nd}
Let $\mathbf{P}$ be an $N$-dimensional smooth projective $\mathbb{K}$-variety, equipped with a very ample line bundle $\mathcal{L}$.
For any integers
$c$, $r\geqslant 0$ with
$2c+r\geqslant N$,
for any $c+r$ ample line bundles $\mathcal{L}_1, \dots, \mathcal{L}_{c+r}$ which are almost
proportional to $\mathcal{L}$, there exists some integer:
\[
\texttt{d}\,
=\,
\texttt{d}\,
(\mathcal{L}_{1},\dots,
\mathcal{L}_{c+r}, \mathcal{L})\
\gg\
1
\]
such that,
for all large integers
$
d_1,
\dots,
d_c,
d_{c+1},
\dots,
d_{c+r}\, 
\geqslant\, 
\texttt{d}
$,
for generic $c+r$
 hypersurfaces: 
\[
H_1
\in
\big|\mathcal{L}_1^{\,\otimes\,d_1}\big|,
\dots,
H_{c+r}\in \big|\mathcal{L}_{c+r}^{\,\otimes\,d_{c+r}}\big|,
\]
the cotangent bundle 
$\Omega_V$ of the intersection of the first $c$ hypersurfaces
$
V
:=
H_1
\cap
\cdots
\cap 
H_c
$
restricted 
to the intersection of all the $c+r$ hypersurfaces
$
X
:=
H_1
\cap
\cdots
\cap 
H_c
\cap
H_{c+1}
\cap
\cdots
\cap 
H_{c+r}
$
is ample.
\end{Theorem}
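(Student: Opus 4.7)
The strategy adapts the Brotbek--Xie framework from \cite{Brotbek-2014-arxiv,Xie-2015-arxiv}. Ampleness of $\Omega_V|_X$ is equivalent, via the standard criterion on $\mathbb{P}(\Omega_V|_X)$, to exhibiting an ample line bundle $\mathcal{A}$ on $X$, an integer $m\gg 1$, and a family of sections of $\Sym^m \Omega_V \otimes \mathcal{A}^{-1}$ whose common zero locus in $\mathbb{P}(\Omega_V|_X)$ is empty. The very ampleness of $\mathcal{L}$ fixes an embedding $\mathbf{P}\hookrightarrow\mathbb{P}^n$ and furnishes sections $z_0,\dots,z_n\in H^0(\mathbf{P},\mathcal{L})$ that play the role of projective coordinates in what follows.

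First, I would feed carefully specialized defining equations of $H_1,\dots,H_c$ into the formal matrices and the dividing device developed earlier in this paper, producing generalized Brotbek-type negatively twisted symmetric differential forms on $V=H_1\cap\cdots\cap H_c$. The twist naturally takes the shape $\mathcal{L}_1^{-a_1}\otimes\cdots\otimes\mathcal{L}_c^{-a_c}\otimes\mathcal{L}^{b}$ for explicit exponents $a_i,b$ depending on $m$ and the $d_i$; the almost proportionality of each $\mathcal{L}_i$ to $\mathcal{L}$ is exactly what ensures this twist equals $\Sym^m\Omega_V\otimes\mathcal{A}^{-1}$ with $\mathcal{A}$ ample, the threshold $\epsilon_0=3/(N^{N^2/2}-1)$ being tailored to the degree bound $\texttt{d}\sim N^{N^2}$ already appearing in Theorem~\ref{Gentle generalization of Debarre conjecture}.

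Next, to kill the directions in $\mathbb{P}(\Omega_V|_X)$ that the generic forms fail to see, I would apply the same construction on every coordinate stratum $V\cap\{z_{\nu_1}=\cdots=z_{\nu_\eta}=0\}$, exploiting the hidden forms of ingredient (2) in the introduction. The extra hypersurfaces $H_{c+1},\dots,H_{c+r}$ merely shrink $V$ to $X$, and in this role they strictly help: a fiber direction that survives the $c$-equation construction on $V$ must still survive after imposing the further $r$ generic equations, which a dimension count using $2c+r\geqslant N$ precludes on each coordinate stratum.

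Finally, deploy the moving coefficients method to upgrade pure Fermat-type test equations to flexible ones covering arbitrary large degrees $d_i\geqslant \texttt{d}$, then apply the product coup to conclude for every such degree tuple rather than merely specific ones. The main obstacle will be tracking uniformity in the heterogeneous polarizations $\mathcal{L}_1,\dots,\mathcal{L}_{c+r}$: the MCM degree estimates must close up despite the polarizations differing from bundle to bundle, and almost-proportionality is precisely the quantitative lubricant that permits this, by reducing every numerical inequality arising in the construction to a single comparison against the reference bundle $\mathcal{L}$.
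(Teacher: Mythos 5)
Your overall plan does follow the paper's architecture (formal matrices and the dividing trick, hidden forms on the coordinate strata, MCM, product coup, with almost proportionality as the numerical glue), but it contains a genuine gap in the step that the whole theorem is designed around: the role of $H_{c+1},\dots,H_{c+r}$. You feed only the defining equations of $H_1,\dots,H_c$ into the determinantal device, let the twist involve only $\mathcal{L}_1,\dots,\mathcal{L}_c$, and declare that the last $r$ hypersurfaces ``merely shrink $V$ to $X$,'' with a dimension count using $2c+r\geqslant N$ killing surviving fiber directions. This cannot work in the regime the statement is made for, namely $2c<N\leqslant 2c+r$. The device of Section~\ref{Section: General Symmetric Differential Forms} needs $e$ section rows and $n\leqslant e$ differentiated rows with $e+n+1=N+1$ columns; with $e=c$ this forces $n=N-c\leqslant c$, i.e.\ $2c\geqslant N$, so for $2c<N$ there simply are no Brotbek-type negatively twisted forms built from the first $c$ equations alone (indeed, on $V$ itself no nonzero negatively twisted symmetric forms exist in general when $\dim V>c$; the forms only exist after restriction to $X$). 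In the paper all $c+r$ sections enter the matrix $\mathsf{M}$ of Section~\ref{section: A Dividing Trick} as rows, the symmetric degree drops to $n=N-c-r\leqslant c$, the twist $\heartsuit$ involves all of $d_1,\dots,d_{c+r}$ (Proposition~\ref{corollary of naive forms}), and the hypothesis $2c+r\geqslant N$ is consumed not by slicing a base locus with $r$ generic hypersurfaces but through the Core Lemma bound $\cdim\,\mathscr{M}^{N}_{2c+r}\geqslant N+2c+r-1\geqslant 2N-1=\dim\mathbb{P}(\Omega_{\mathbf{P}})$ in Proposition~\ref{why the core lemma is essential}. Your ``a fiber direction that survives the $c$-equation construction'' argument has no forms to act on when $2c<N$, so the dimension count is vacuous there.

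Two secondary points. First, the method never produces a single family of global sections of $\Sym^m\Omega_V\otimes\mathcal{A}^{-1}$ on $X$ with empty common zero locus: one only gets discrete base locus over each coordinate-nonvanishing stratum, and the hidden forms live only on the strata ${}_{v_1,\dots,v_\eta}X$ with varying symmetric degree $n-\eta$ and varying twist; the paper therefore concludes by the curve-by-curve nefness argument for $\mathcal{O}_{\mathbb{P}(\Omega_{\mathbf{P}})}(1)\otimes\pi^{*}\mathcal{S}^{-1}$ followed by ``nef $+$ ample $=$ ample,'' and your proposal would need an equivalent gluing or intersection-theoretic step. Second, MCM by itself only treats polarizations of the special shape $\mathcal{A}_i\otimes\mathcal{L}\otimes\mathcal{L}^{d}$ with one common $d$ (Theorem~\ref{general theorem of almost the same degree case}); to reach arbitrary tuples $d_i\geqslant\texttt{d}$ you need the simultaneous normalization of Observation~\ref{technical observation} (two consecutive powers $\mathcal{L}_i^{\widetilde{s}_i}$, $\mathcal{L}_i^{\widetilde{s}_i+1}$ of that shape with a common $\widetilde{\mathcal{L}}$ and common $d$) and the product-coup decomposition, whose pieces are again configurations of type $(c',r')$ with $2c'+r'=2c+r$ --- which is exactly why the intermediate theorem must be proved with both parameters $c$ and $r$ and not only in the case $r=0$. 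Your sketch names the product coup but leaves these reductions, which carry real content, unaddressed.
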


We will see that in our proof, the lower degree bound $\texttt{d}=\texttt{d}(\mathcal{L}_{1},\dots,
\mathcal{L}_{c+r}, \mathcal{L})$ is effective. In particular, when $r=0$ and all  $\mathcal{L}_1=\cdots=\mathcal{L}_c=\mathcal{L}$ coincide,
we will obtain the effective degree bound $N^{N^2}$ of
Theorem~\ref{Gentle generalization of Debarre conjecture}. See  Subsection~\ref{subsection: proof of theorem 2} for the details.

\medskip
This paper is organized as follows.
In Section~\ref{section: General Strategy},
we outline the general strategy for the Debarre Ampleness Conjecture, which serves as a guiding principle of our approach. Next, in Section~\ref{Section: General Symmetric Differential Forms},
we introduce a notion of {\sl formal matrices},
and use their determinants to produce symmetric differential forms.
Then, for the purpose of making negative twist, we
play a {\sl dividing trick} in Section~\ref{section: A Dividing Trick}, and thus generalize the aforementioned ingredient {\bf (1)}. 
Consequently,
we are able to generalize {\bf (2)} 
in Section~\ref{section: Hidden Symmetric Differential Forms}. Thus, 
by adapting the ingredients {\bf (3)}, {\bf (4)} as well,
we establish Theorem~\ref{Main Theorem, 2nd}
in Section~\ref{Section: Applications of MCM},
by means of the moving coefficients method developed in~\cite{Xie-2015-arxiv}.
Lastly, we fulfill some technical details in Section~\ref{section: Some Technical Details}.

\medskip
It is worth to mention that,
by means of formal matrices,
we can also construct higher order jet differential forms. Therefore, we can also apply MCM
to study the ampleness of certain jet subbundle
of hypersurfaces in $\mathbb{P}_{\mathbb{C}}^N$, notably when $N=3$.
We will discuss this in our coming paper.

\medskip
\noindent {\bf Acknowledgments.}   
I would like to
thank Damian Brotbek
and Lionel Darondeau for inspiring
 discussions.
Also, I thank my thesis advisor     
Jo\"el Merker for valuable suggestions and remarks.

\section{\bf General Strategy}
\label{section: General Strategy}

It seems that, up to date, there has been only one  strategy to settle the Debarre Ampleness Conjecture.
To be precise,
for fixed degrees $d_1,\dots,d_c$ of hypersurfaces, 
the strategy is firstly to choose a
certain subfamily of $c$ hypersurfaces,
and then secondly to
construct sufficiently many 
negatively twisted symmetric differential forms
over the corresponding  subfamily of intersections,
and lastly to narrow their base locus up to discrete points over a generic intersection. Thus, there exists
one desirable ample example in this subfamily, which
suffices to conclude the generic ampleness of the whole family thanks to a theorem of Grothendieck.

Following this central idea, the first result~\cite{Brotbek-2014} was obtained in the case $c=N-2$ for complex surfaces 
$X=H_1\cap\cdots\cap H_c\subset \mathbb{P}_{\mathbb{C}}^N$, by employing a
 method related to Kobayashi hyperbolicity problems, in which the existence\big/quantity of negatively twisted symmetric differential forms
 was guaranteed\big/measured  by the holomorphic Morse inequality.  
Such an approach would fail in the higher dimensional case,
simply because one could not control the base locus of the
 implicitly given symmetric forms.

To find an alternative approach, the key breakthrough happened when Brotbek 
constructed explicit negatively twisted symmetric differential forms
\cite[Lemma~4.5]{Brotbek-2014-arxiv} by a cohomological approach,
for the subfamily of pure Fermat-type hypersurfaces
of the same degree $d+\epsilon$ defined by: 
\[
F_i\,
=\,
\sum_{j=0}^N\,A_i^j\,z_j^{d}
\qquad
{\scriptstyle{(i\,=\,1\cdots\,c)}},
\] 
where $d, \epsilon\geqslant 1$, and where all  coefficients $A_i^j$ are some homogeneous polynomials 
with $\deg\,A_i^j=\epsilon\geqslant 1$. Then,
in the case $4c\geqslant 3N-2$, Brotbek showed that over a generic intersection $X$, the obtained symmetric forms
have discrete base locus, and hence he established the conjectured ampleness. 

However, when $4c<3N-2$, this approach would not work, because the obtained symmetric differential forms
keep positive dimensional base locus, for instance
in the limiting case $2c=N$, there is only one  obtained symmetric form, 
whereas $\dim\, \mathbb{P}(\Omega_X)=N-1 \gg 1$. 

To overcome this difficulty, the author~\cite{Xie-2015-arxiv}
introduced the moving coefficients method (MCM), the 
cornerstone of which is a generalization of Brotbek's symmetric differential forms
for general Fermat-type hypersurfaces defined by:
\begin{equation}
\label{General c+r Fermat type hypersurfaces}
F_i
=
\sum_{j=0}^N\,
A_i^j\,z_j^{\lambda_j}
\qquad
{\scriptstyle{(i\,=\,1\cdots\,c)}},
\end{equation}
where $\lambda_0,\dots,\lambda_N\geqslant 1$
and where all polynomial coefficients
$A_i^j$ 
satisfy
$
\deg A_i^j+\lambda_j
=
\deg F_i
$.
Then, by employing the other major ingredients {\bf (2)},
{\bf (3)}, {\bf (4)} mentioned before, the Debarre Ampleness Conjecture finally turned into Theorem~\ref{Debarre Ampleness Conjecture}.

Recently, Brotbek and Darondeau~\cite{Brotbek-Lionel-2015} discovered a new way to construct 
negatively twisted symmetric differential forms
for a certain subfamily of hypersurfaces,
using pullbacks of some Pl\"ucker-embedding like morphisms,
and
they successfully controlled the base loci by means of
deep theorems in algebraic geometry. Their approach
together with the product coup gives another proof of the Debarre Ampleness Conjecture. Also, it is expected to achieve an effective lower bound on hypersurface degrees, which would ameliorate the preceding bound $N^{N^2}$ of Theorem~\ref{Debarre Ampleness Conjecture}.

\section{\bf Formal Matrices Produce Symmetric Differential Forms}
\label{Section: General Symmetric Differential Forms}
Aiming at the General Debarre Ampleness Conjecture, and
following the general strategy above,
we would like to 
first construct negatively twisted
symmetric differential forms.
Recalling the determinantal structure of Brotbek's symmetric differential forms~\cite[Lemma~4.5]{Brotbek-2014-arxiv}, in fact, we can take any
{\sl formal matrices} for construction,
regardless of negative twist at the moment.

Take an arbitrary scheme $\mathbf{P}$.
For any positive integers $1\leqslant n\leqslant e$,
for any $e$ line bundles $\mathcal{S}_1, \dots, 
\mathcal{S}_e$ over $\mathbf{P}$, 
we construct an $(e+n)\times (e+n)$
formal matrix $K$ such that, for $p=1\cdots e$ its $p$-th row consists of global sections $F_p^1, \dots, F_p^{e+n}\in \mathsf{H}^0(\mathbf{P}, \mathcal{S}_p)$,
and for $q=1\cdots n$ its $(e+q)$-th row is the {\sl formal
differential} --- to be defined --- of the $q$-th row:
\begin{equation}
\label{formal matrix K}
K
:=
\begin{pmatrix}
F_1^1 & \cdots & F_1^{e+n} \\
\vdots & & \vdots \\
F_e^1 & \cdots & F_e^{e+n} \\[6pt]
\dformal F_1^1 & \cdots & 
\dformal F_1^{e+n} \\
\vdots & & \vdots \\
\dformal F_n^1 & \cdots & 
\dformal F_n^{e+n} 
\end{pmatrix}.
\end{equation}

We will see later that the determinant of $K$ produces a twisted symmetric differential form on $\mathbf{P}$.
First of all, we define the above formal differential entries $\dformal F_i^j$ in a natural way.

\begin{Definition}
\label{define formal differential}
Let $\mathcal{S}$ be a line bundle over $\mathbf{P}$,
with a global section $S$. For any Zariski open set $U\subset\mathbf{P}$
with a trivialization
$
\mathcal{S}\big{\vert}_U
=
\mathcal{O}_U
\cdot
s$ ($s\in \mathsf{H}^0(U,\,\mathcal{S})$ is invertible),
 denote $S/s$ for the unique
$\widetilde{s}\in\mathcal{O}_{\mathbf{P}}(U)$
such that
$
S
=
\widetilde{s}\cdot s
$.
Also, define the formal differential $\dformal\,S$ in the local coordinate $(U,s)$ by:
\[
\dformal
S\,(U,s)\,
:=\,
\mathrm{d}\,(S/s)\cdot s\
\in\
\mathsf{H}^0(U,\,\Omega_{\mathbf{P}}^1\otimes \mathcal{S}),
\]
where `$\mathrm{d}$' stands for the usual differential.
\end{Definition}

Let us check that the above definition works well with the usual Leibniz's rule. Indeed,
let $\mathcal{S}_1$, $\mathcal{S}_2$ be two line bundles over $\mathbf{P}$,
with any two global sections $S_1$, $S_2$ respectively. For any Zariski open set $U\subset\mathbf{P}$
with trivializations
$
\mathcal{S}_1\big{\vert}_U
=
\mathcal{O}_U
\cdot
s_1$ and
$
\mathcal{S}_2\big{\vert}_U
=
\mathcal{O}_U
\cdot
s_2$,
we may compute:
\[
\aligned
\dformal
(S_1\otimes S_2)\,(U,s_1\otimes s_2)\,
&
=\,
\mathrm{d}\,
(S_1/s_1 \cdot S_2/s_2)\cdot s_1\otimes s_2
\\
&
=\,
\mathrm{d}\,
(S_1/s_1)\cdot 
S_2/s_2\,
\cdot\,
s_1\otimes s_2\,
+\,
\mathrm{d}\,
(S_2/s_2)\cdot 
S_1/s_1\,
\cdot\,
s_1\otimes s_2
\\
\explain{\,identify $S_1 \otimes S_2 \cong S_2 \otimes S_1$\,}
\qquad
&
=\,
\mathrm{d}\,
(S_1/s_1)\cdot 
s_1\,
\otimes\,S_2/s_2
\cdot s_2\,
+\,
\mathrm{d}\,
(S_2/s_2)\cdot 
s_2\,
\otimes\,
S_1/s_1
\cdot
s_1
\\
&
=\,
\dformal
S_1\,(U,s_1)\,
\otimes\,S_2\,
+\,
\dformal
S_2\,(U,s_2)\,
\otimes\,
S_1.
\endaligned
\]
Dropping the tensor symbol `$\otimes$'
and coordinates $(U,s_1,s_2)$, we abbreviate the above identity as:
\[
\dformal
(S_1\cdot S_2)\,
=\,
\dformal
S_1\cdot S_2
+
\dformal
S_2
\cdot 
S_1.
\]

Now, let us compute the determinant of~\thetag{\ref{formal matrix K}} in local coordinates.
For any Zariski open set $U\subset\mathbf{P}$
with trivializations
$
\mathcal{S}_1\big{\vert}_U
=
\mathcal{O}_U
\cdot
s_1$,
$\dots$,
$
\mathcal{S}_e\big{\vert}_U
=
\mathcal{O}_U
\cdot
s_e$,
writing $f_i^j:=F_i^j/s_i\in \mathcal{O}_{\mathbf{P}}(U)$,
we may factor:
{\footnotesize
\begin{equation}
\label{definition of formal matrix K}
K\,(U,s_1,\dots,s_e)\,
:=\,
\begin{pmatrix}
f_1^1\cdot s_1 & \cdots & f_1^{e+n}\cdot s_1 \\
\vdots & & \vdots \\
f_e^1\cdot s_e & \cdots & f_e^{e+n}\cdot s_e \\[6pt]
\mathrm{d}\,f_1^1\cdot s_1 & \cdots & 
\mathrm{d}\,f_1^{e+n}\cdot s_1 \\
\vdots & & \vdots \\
\mathrm{d}\,f_n^1\cdot s_n & \cdots & 
\mathrm{d}\,f_n^{e+n}\cdot s_n 
\end{pmatrix}\,
=\,
\underbrace{
\begin{pmatrix}
s_1 &&&&&
\\
& 
\ddots 
&&&&
\\
&&
s_e
&&&  
\\
&&&
s_1
&&
\\
&&&&
\ddots
&
\\
&&&&&
s_n
\end{pmatrix}
}_{
=:\,
\mathsf{T}_{s_1,\dots,s_e}^U
}\,
\cdot\,
\underbrace{
\begin{pmatrix}
f_1^1 & \cdots & f_1^{e+n} \\
\vdots & & \vdots \\
f_e^1 & \cdots & f_e^{e+n}\\[6pt]
\mathrm{d}\,f_1^1 & \cdots & 
\mathrm{d}\,f_1^{e+n} \\
\vdots & & \vdots \\
\mathrm{d}\,f_n^1 & \cdots & 
\mathrm{d}\,f_n^{e+n}
\end{pmatrix}
}_{
=:\,
(K)_{s_1,\dots,s_e}^U
}.
\end{equation}
}
Denoting the last two matrices by
$\mathsf{T}_{s_1,\dots,s_e}^U$, $(K)_{s_1,\dots,s_e}^U$, we obtain:
\begin{equation}
\label{definition of det K}
\aligned
\det
K\,(U,s_1,\dots,s_e)\,
&
=\,
\det
\mathsf{T}_{s_1,\dots,s_e}^U\,
\cdot\,
\det\,
(K)_{s_1,\dots,s_e}^U
\\
&
=\,
s_1
\cdots
s_e\,
s_1
\cdots
s_n\,
\cdot\,
\det 
\big(
K
\big)_{s_1,\dots,s_e}^U\ 
\in\
\mathsf{H}^0\,
\Big(
U,\,\mathsf{Sym}^n\,\Omega^1_{\mathbf{P}}\otimes 
\mathcal{S}(\heartsuit)\Big),
\endaligned
\end{equation}
where for shortness we denote:
\begin{equation}
\label{twisted degree of heart}
\mathcal{S}(\heartsuit)\,
:=\,
\big(
\otimes_{p=1}^{e}\,\mathcal{S}_p
\big)\,
\otimes\,
\big(
\otimes_{q=1}^{n}\,\mathcal{S}_q
\big).
\end{equation}

\begin{Proposition}
\label{very naive determinant of c+r very ample line bundle sections is still well defined}
The local definition: 
\[
\det
K\big{\vert}_U\,
:=\,
\det
K\,(U,s_1,\dots,s_e)\
\in\
\mathsf{H}^0\,\Big(U,\,\mathsf{Sym}^n\,\Omega^1_{\mathbf{P}}\otimes 
\mathcal{S}(\heartsuit)\Big)
\]
does not depend on the particular choices of invertible sections
$s_1,\dots,s_e$ over $U$.
\end{Proposition}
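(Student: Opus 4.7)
The plan is to show directly that replacing the invertible local trivializations $(s_1,\dots,s_e)$ of the line bundles $\mathcal{S}_1,\dots,\mathcal{S}_e$ over $U$ by any others $(s'_1,\dots,s'_e)=(u_1s_1,\dots,u_es_e)$, with $u_p\in\mathcal{O}_U^\times$, leaves the value $\det K\,(U,s_1,\dots,s_e)$ invariant. Via the factorization~\eqref{definition of formal matrix K}, we have $\det K=\det\mathsf{T}\cdot\det(K)$, so it will suffice to track how both factors transform under the coordinate change.

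The first step is to derive the transformation law of the formal differentials of Definition~\ref{define formal differential}. A short computation with the ordinary Leibniz rule yields
\[
\dformal F_q^j\,(U,u_qs_q)
\;=\;
\dformal F_q^j\,(U,s_q)\,-\,F_q^j\cdot\frac{\mathrm{d}u_q}{u_q}
\qquad
{\scriptstyle(q\,=\,1\,\cdots\,n)},
\]
whereas the top $e$ rows of $K$, consisting of the intrinsic global sections $F_p^j\in\mathsf{H}^0(\mathbf{P},\mathcal{S}_p)$, are unchanged at the level of $K$ itself.

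Plugging these into the factorization for the new trivializations, on the one hand $\det\mathsf{T}'=\big(\prod_{p=1}^eu_p\big)\big(\prod_{q=1}^nu_q\big)\cdot\det\mathsf{T}$; on the other hand the reduced matrix $(K)'$ is obtained from $(K)$ by: (i) rescaling row $p$ by $1/u_p$ for each $p=1,\dots,e$, (ii) rescaling row $e+q$ by $1/u_q$ for each $q=1,\dots,n$, and (iii) subtracting $(\mathrm{d}u_q)/u_q$ times the $q$-th row from the $(e+q)$-th row, which is precisely the residue term picked up from the transformation law above. Operations (i)--(ii) contribute the inverse scalar factor $\prod_pu_p^{-1}\cdot\prod_qu_q^{-1}$ to $\det(K)'$, exactly cancelling the scaling of $\det\mathsf{T}'$, so the whole assertion reduces to showing that operation (iii) preserves $\det(K)$.

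This is the delicate point, since the coefficient $(\mathrm{d}u_q)/u_q\in\Omega^1_{\mathbf{P}}(U)$ is a $1$-form, not a scalar function, and the familiar identity `adding a multiple of one row to another preserves the determinant' must be justified in this context. The key observation is that every entry of the reduced matrix $(K)$ lies in the commutative graded algebra $R:=\bigoplus_k\mathsf{Sym}^k\,\Omega^1_{\mathbf{P}}(U)$, inside which determinants obey the usual formal identities exactly as over any commutative ring. Concretely, by multilinearity in the $(e+q)$-th row, the change in $\det(K)$ under operation (iii) equals $(\mathrm{d}u_q)/u_q$ times the determinant of a matrix whose $q$-th and $(e+q)$-th rows coincide (both being the scalar row $(f_q^1,\dots,f_q^{e+n})$), which vanishes by the alternating property over $R$. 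Combining these steps yields $\det K\,(U,s'_1,\dots,s'_e)=\det K\,(U,s_1,\dots,s_e)$, as desired.
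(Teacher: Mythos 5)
Your argument is correct and is essentially the paper's own proof in a different dress: writing $s_p'=u_ps_p$ and tracking the reduced matrix via elementary row operations (rescalings plus a shear by the $1$-form $\mathrm{d}u_q/u_q$) is exactly the paper's transition identity $(K)_{s_1',\dots,s_e'}^U=\mathsf{T}_{s_1',\dots,s_e'}^{s_1,\dots,s_e}\cdot(K)_{s_1,\dots,s_e}^U$ with a lower-triangular transition matrix, whose determinant sees only the diagonal factors that cancel against $\det\mathsf{T}$. Your explicit appeal to multilinearity and the alternating property of the determinant over the commutative ring $\bigoplus_k\mathsf{Sym}^k\,\Omega^1_{\mathbf{P}}(U)$ merely spells out what the paper uses implicitly when it discards the strictly lower-triangular ($1$-form) entries, so the two proofs coincide in substance.
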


\begin{proof}
Assume that $s_1',\dots, s_e'$ are any other invertible sections
 of $\mathcal{S}_1\big{\vert}_{U}, \dots, \mathcal{S}_e\big{\vert}_{U}$.
Abbreviating the $p$-th row of the formal matrix $K$ by $F_p$,
we may compute:
\[
\aligned
F_p/s_p'\,
&
=\,
s_p/s_p'\,
\cdot\,
F_p/s_p
\qquad
\qquad
\qquad
\qquad
\qquad
\quad\ 
{\scriptstyle{(p\,=\,1\cdots\,e)}},
\\
\explain{Leibniz's rule}
\qquad
\mathrm{d}\,\big(F_q/s_q'\big)\, 
&
=\,
\mathrm{d}\,(s_q/s_q')\,
\cdot\,
F_q/s_q\,
+\,
s_q/s_q'\,
\cdot\,
\mathrm{d}\,\big(F_q/s_q\big)
\qquad
{\scriptstyle{(q\,=\,1\cdots\,n)}}.
\endaligned
\]
Thus we receive the transition identity:
\begin{equation}
\label{concrete transition relation, well-defined}
\big(
K
\big)_{s_1',\dots,s_e'}^U\,
=\,
\mathsf{T}_{s_1',\dots,s_e'}^{s_1,\dots,s_e}\,
\cdot\,
(K)_{s_1,\dots,s_e}^U,
\end{equation}
where $\mathsf{T}_{s_1',\dots,s_e'}^{s_1,\dots,s_e}$
is an $(e+n)\times (e+n)$ lower triangular matrix with the diagonal entries 
$s_1/s_1', \dots, s_e/s_e'$, $s_1/s_1', \dots, s_n/s_n'$
in the exact order. Taking determinant on both sides  of~\thetag{\ref{concrete transition relation, well-defined}} thus yields:
\[
\aligned
\det 
\big(
K
\big)_{s_1',\dots,s_e'}^U\,
&
=\,
\det\,
\mathsf{T}_{s_1',\dots,s_e'}^{s_1,\dots,s_e}\,
\cdot\,
\det 
\big(
K
\big)_{s_1,\dots,s_e}^U
\\
&
=\,
\underbrace{
\big(
\det\,
\mathsf{T}_{s_1',\dots,s_e'}^U
\big)^{-1}
}_{
=\,(
s_1'
\cdots
s_e'\,
s_1'
\cdots
s_n'
)^{-1}
}\,
\cdot\,
\underbrace{
\big(
\det\,
\mathsf{T}_{s_1,\dots,s_e}^U
\big)}_{
=\,
s_1
\cdots
s_e\,
s_1
\cdots
s_n
}\,
\cdot\,
\det 
\big(
K
\big)_{s_1,\dots,s_e}^U
\endaligned
\]
Multiplying by $\det\mathsf{T}_{s_1',\dots,s_e'}^U$ on both sides, we conclude the proof.
\end{proof}

Consequently, we receive

\begin{Proposition}
\label{det K is well defined}
The determinant of the formal matrix~\thetag{\ref{formal matrix K}} is globally well defined:
\[
\det
K\
\in\
\mathsf{H}^0\,\Big(\mathbf{P},\,\mathsf{Sym}^n\,\Omega^1_{\mathbf{P}}\otimes 
\mathcal{S}(\heartsuit)\Big).
\eqno
\qed
\]
\end{Proposition}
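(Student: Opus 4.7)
The plan is to deduce this global statement directly from Proposition~\ref{very naive determinant of c+r very ample line bundle sections is still well defined} via a standard sheaf-theoretic gluing argument. Since all sheaves in sight are coherent (in fact, locally free), and the value $\det K$ is a section of the coherent sheaf $\mathsf{Sym}^n\,\Omega^1_{\mathbf{P}}\otimes \mathcal{S}(\heartsuit)$, it suffices to construct a compatible family of local sections on an open cover.

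First, I would choose a Zariski open cover $\{U_\alpha\}_{\alpha\in I}$ of $\mathbf{P}$ on which all $e$ line bundles $\mathcal{S}_1,\dots,\mathcal{S}_e$ trivialize simultaneously, with invertible sections $s_1^{(\alpha)},\dots,s_e^{(\alpha)}\in \mathsf{H}^0(U_\alpha,\mathcal{S}_p)$. Applying the local formulas~\thetag{\ref{definition of formal matrix K}} and~\thetag{\ref{definition of det K}}, this gives, for each $\alpha$, a local section
\[
\det K\bigl|_{U_\alpha}\,
:=\,
\det K\,\bigl(U_\alpha,\,s_1^{(\alpha)},\dots,s_e^{(\alpha)}\bigr)\
\in\
\mathsf{H}^0\Bigl(U_\alpha,\,\mathsf{Sym}^n\,\Omega^1_{\mathbf{P}}\otimes \mathcal{S}(\heartsuit)\Bigr).
\]

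Next, I would verify the cocycle condition on each overlap $U_{\alpha}\cap U_\beta$. Restricting, both collections $s_p^{(\alpha)}\bigr|_{U_\alpha\cap U_\beta}$ and $s_p^{(\beta)}\bigr|_{U_\alpha\cap U_\beta}$ are invertible sections of $\mathcal{S}_p\bigr|_{U_\alpha\cap U_\beta}$, hence two admissible choices of trivializations over $U_\alpha\cap U_\beta$. Proposition~\ref{very naive determinant of c+r very ample line bundle sections is still well defined}, applied with $U:=U_\alpha\cap U_\beta$, asserts precisely that the determinant is independent of such a choice, so
\[
\det K\bigl|_{U_\alpha}\bigl|_{U_\alpha\cap U_\beta}\,
=\,
\det K\bigl|_{U_\beta}\bigl|_{U_\alpha\cap U_\beta}.
\]
By the sheaf axiom for $\mathsf{Sym}^n\,\Omega^1_{\mathbf{P}}\otimes\mathcal{S}(\heartsuit)$, these local sections uniquely glue to a global section $\det K\in \mathsf{H}^0\bigl(\mathbf{P},\,\mathsf{Sym}^n\,\Omega^1_{\mathbf{P}}\otimes \mathcal{S}(\heartsuit)\bigr)$, as claimed.

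Since the genuinely nontrivial transformation computation, namely the lower-triangular transition identity~\thetag{\ref{concrete transition relation, well-defined}} together with the cancellation of the diagonal factors $s_1\cdots s_e\,s_1\cdots s_n$, has already been executed in the preceding proposition, there is no real obstacle here: the present statement is essentially a formal corollary asserting that a well-defined local construction assembles into a global one. The only point requiring minor care is to ensure that the twisting bundle $\mathcal{S}(\heartsuit)$ indeed absorbs \emph{exactly} the diagonal factor $\det\mathsf{T}_{s_1,\dots,s_e}^U$, which is transparent from the bookkeeping in~\thetag{\ref{twisted degree of heart}}.
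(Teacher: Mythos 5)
Your proposal is correct and is essentially the paper's own argument: the paper derives Proposition~\ref{det K is well defined} directly from Proposition~\ref{very naive determinant of c+r very ample line bundle sections is still well defined} (``Consequently, we receive\dots''), the gluing over a trivializing cover being left implicit, whereas you simply spell out the cocycle check and the sheaf axiom. No gap; the only genuinely nontrivial content, the transition identity~\thetag{\ref{concrete transition relation, well-defined}} and the cancellation of the diagonal factors, is indeed already handled in the preceding proposition, just as you note.
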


To grasp the essence of the above arguments, we provide
another wholly formal

\medskip
\noindent
{\em `Smart Proof'.}\,
Suppose that we do not know the meaning of {\sl formal differential} $\dformal F$,
for
any global section
$F$ of a line bundle $\mathcal{S}$ over $\mathbf{P}$.
Nevertheless, we still try to compute the determinant of the formal matrix~\thetag{\ref{formal matrix K}}.

First of all, we would like to extract some useful information out of the `mysterious' $\dformal F$.
{\em A priori},
we may assume that the formal differential satisfies
the Leibniz's rule in a certain sense, and also that when $\mathcal{S}=\mathcal{O}_{\mathbf{P}}$
it coincides with the usual differential $\mathrm{d}$.
Thus,
starting with any local section $z$ of $\mathcal{S}$,
we would have:
\[
\aligned
F\,
&
=\,
z
\cdot
F/z,
\\
\dformal
F\,
&
=\,
\dformal
z\,
\cdot
F/z
+
z
\cdot
\mathrm{d}\,
(F/z),
\endaligned
\]
that is:
\begin{equation}
\label{general matrix-entries-relation}
\begin{pmatrix}
F
\\
\dformal
F
\end{pmatrix}
=
\underline{
\begin{pmatrix}
z
&
0
\\
\star
&
z
\end{pmatrix}
}
\,
\begin{pmatrix}
F/z
\\
\mathrm{d}\,
\big(
F/z
\big)
\end{pmatrix},
\end{equation}
where 
$\star=\dformal z$
is meaningless\big/negligible in our coming computations.
Indeed, all we need is that the above underlined $2\times 2$ formal
matrix is lower triangular, with meaningful diagonal.

Back to our formal proof, we abbreviate every row of $K$ 
as $F_1,\dots,F_e, \dformal F_1, \dots, \dformal F_n$, and for convenience 
we write:
\[
K\,
=\,
\big(
F_1,
\dots,
F_e,
\dformal F_1, 
\dots,
\dformal F_n
\big)^{\mathrm{T}}.
\]
Over any Zariski open set $U\subset\mathbf{P}$
with invertible sections
$z_1,\dots, z_e$ of $\mathcal{S}_1, \dots, \mathcal{S}_e$ respectively,
using identity~\thetag{\ref{general matrix-entries-relation}},
we can dehomogenize $K$
with respect to $z_1,\dots,z_e$ by:
\begin{equation}
\label{dehomogenize K with respect to z_1, ..., z_e}
K\,
=\,
\mathsf{T}_{z_1,\dots,z_e}\,
\cdot\,
\underbrace{
\Big(
F_1/z_1,
\dots,
F_e/z_e,
\mathrm{d}\,\big(F_1/z_1\big), 
\dots,
\mathrm{d}\,\big(F_n/z_n\big)
\Big)^{\mathrm{T}}
}_{
=:\,(K)_{z_1,\dots,z_e}
},
\end{equation}
where $\mathsf{T}_{z_1,\dots,z_e}$
is a lower triangular $(e+n)\times (e+n)$ formal matrix
with diagonal entries $z_1,\dots, z_e, z_1, \dots, z_n$ in the exact order.
Now, it is desirable to notice that, on the right-hand-side of~\thetag{\ref{dehomogenize K with respect to z_1, ..., z_e}}, the matrix $(K)_{z_1,\dots,z_e}$ and
the diagonal of
the formal matrix $\mathsf{T}_{z_1,\dots,z_e}$ are well-defined,
thus all
the `mysterious differentials' of the matrix $K$
appear only in the strict lower-left part of  $\mathsf{T}_{z_1,\dots,z_e}$, which would immediately disappear after taking determinant
 on both sides of~\thetag{\ref{dehomogenize K with respect to z_1, ..., z_e}}:
\[
\aligned
\det\,
K\big{\vert}_U\,
&
=\,
\det\,
\mathsf{T}_{z_1,\dots,z_e}\,
\cdot\,
\det 
\big(
K
\big)_{z_1,\dots,z_e}
\\
&
=\,
z_1
\cdots
z_e\,
z_1
\cdots
z_n\,
\cdot\,
\det 
\big(
K
\big)_{z_1,\dots,z_e}\ 
\in\
\mathsf{H}^0\,
\Big(
U,\,\mathsf{Sym}^n\,\Omega^1_{\mathbf{P}}\otimes 
\mathcal{S}(\heartsuit)\Big).
\endaligned
\]

{\em Bien s\^ur}, it is independent of the choices of $z_1,\dots,z_e$, since the left-hand-side --- a formal determinant --- is.
\qed

\begin{Remark}
The formal differential $\dformal$
is much the same as the usual differential $\mathrm{d}$, in the sense that both of them can be defined locally, and both of them obey the Leibniz's rule.
These two facts constitute the essence of Proposition~\ref{det K is well defined}.
\end{Remark}

Next, we consider $e$ sections:
\begin{equation}
\label{the most general Fermat-type hypersurface}
F_i
=
\sum_{j=0}^{e+n}\,
F_i^j\
\in\
\mathsf{H}^0\,
(
\mathbf{P},
\mathcal{S}_i
)
\qquad
{\scriptstyle{(i\,=\,1\cdots\,e)}},
\end{equation}
each $F_i$ being the sum of $e+n+1$ global sections of the same line bundle $\mathcal{S}_i$.
Let $V$ be the intersection of the zero loci of the first $n$ sections:
\[
V\,
:=\,
\{
F_1
=
0
\}
\cap
\cdots
\cap
\{
F_n
=
0
\}\
\subset\
\mathbf{P},
\]
and let $X$ be the intersection of the zero loci of all the $e\geqslant n$ sections:
\[
X\,
:=\,
\{
F_1
=
0
\}
\cap
\cdots
\cap
\{
F_e
=
0
\}\
\subset\
V\
\subset
\mathbf{P}.
\]
Let $\mathsf{K}$ be the $(e+n)\times (e+n+1)$ formal matrix
whose $e+n$ rows copy the
$e+n+1$ terms of 
$F_1,\dots,F_e, \dformal F_1,
\dots, \dformal F_n$ in the exact order:
\[
\mathsf{K}
:=
\begin{pmatrix}
F_1^0 & \cdots & F_1^{e+n} \\
\vdots & & \vdots \\
F_e^0 & \cdots & F_e^{e+n} \\[6pt]
\dformal F_1^0 & \cdots & 
\dformal F_1^{e+n} \\
\vdots & & \vdots \\
\dformal F_n^0 & \cdots & 
\dformal F_n^{e+n} 
\end{pmatrix}.
\]
Also, for $j=0\cdots e+n$, let
$\widehat{\mathsf{K}}_{j}$ denote the submatrix of
$\mathsf{K}$ obtained by omitting the $(j+1)$-th column.

Since the restricted cotangent sheaf $\Omega_V^1\big{\vert}_X$
is formally defined by the $e+n$ equations: 
\[
F_1=0, \dots, F_{e}=0,\,
\dformal F_1=0, \dots, \dformal F_n=0,
\]
i.e. the sum of all $e+n+1$ columns of $\mathsf{K}$ vanishes, by Observation~\ref{naive Cramer's rule} below, we may receive

\begin{Proposition}
\label{naive forms coincide 3.3}
For all $j=0\cdots e+n$, 
the $e+n+1$ sections: 
\[
\psi_{j}\,
=\,
(-1)^j
\det\,
\widehat {\sf K}_{j}\
\in\
\mathsf{H}^0\,\Big(\mathbf{P},\,\mathsf{Sym}^n\,\Omega^1_{\mathbf{P}}\otimes 
\mathcal{S}(\heartsuit)\Big),
\]
when restricted to $X$, give one and the same section:
\[
\psi\
\in\
\mathsf{H}^0\,
\Big(
X,\,
\mathsf{Sym}^n\,\Omega^1_{V}\otimes 
\mathcal{S}(\heartsuit)\Big).
\] 
\end{Proposition}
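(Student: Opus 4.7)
The plan is to interpret the equality of the $\psi_j|_X$ as a direct consequence of a column-sum relation holding in $\mathsf{Sym}^n\,\Omega^1_V\otimes\mathcal{S}(\heartsuit)|_X$, combined with the naive Cramer's rule alluded to in the statement (Observation~\ref{naive Cramer's rule}). So the core content is to verify that, after restriction to $X$ and passage from $\Omega^1_{\mathbf{P}}$ to $\Omega^1_V$, the $e+n+1$ columns of $\mathsf{K}$ indeed sum to zero; the identification of the $\psi_j$ is then a purely linear-algebraic consequence.

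Concretely, I would proceed in three short steps. First, I would record the (obvious) additivity of the formal differential: since $\dformal$ is defined locally by $\mathrm{d}\,(\cdot/s)\cdot s$ where $\mathrm{d}$ is the usual differential, it is $\mathbb{K}$-linear, so for every $q=1\cdots n$,
\[
\sum_{j=0}^{e+n}\,\dformal F_q^j\,
=\,
\dformal\Bigl(\sum_{j=0}^{e+n}\,F_q^j\Bigr)\,
=\,
\dformal F_q\
\in\
\mathsf{H}^0(\mathbf{P},\,\Omega^1_{\mathbf{P}}\otimes\mathcal{S}_q).
\]
Similarly the $j$-sum along the $p$-th row with $p\leqslant e$ is $F_p$ itself. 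Second, I would verify that these two sets of row-sums vanish after the required restriction: for $p=1\cdots e$ we have $F_p|_X=0$ by definition of $X$; for $q=1\cdots n$, the section $F_q$ lies in the ideal of $V$, hence its formal differential $\dformal F_q$ lies in the image of the conormal sheaf $I_V/I_V^2\otimes\mathcal{S}_q\to \Omega^1_{\mathbf{P}}|_V\otimes\mathcal{S}_q$, and therefore projects to $0$ in $\Omega^1_V\otimes\mathcal{S}_q$, a fortiori in its further restriction to $X$. Consequently, viewing $\mathsf{K}|_X$ as an $(e+n)\times(e+n+1)$ matrix with entries in the appropriate twists of $\mathcal{O}_X$ and $\Omega^1_V|_X$, the sum of its $e+n+1$ columns is the zero vector.

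Third, I would invoke the naive Cramer's rule: if the columns $C_0,\dots,C_{e+n}$ of an $N\times(N+1)$ matrix satisfy $\sum_j C_j=0$, then multilinearity and the alternating property of the determinant give
\[
(-1)^j\,\det\bigl(C_0,\dots,\widehat{C_j},\dots,C_{e+n}\bigr)
\]
independent of $j$. Applied to $\mathsf{K}|_X$, this yields exactly the common value $\psi\in\mathsf{H}^0\bigl(X,\,\mathsf{Sym}^n\,\Omega^1_V\otimes\mathcal{S}(\heartsuit)\bigr)$. The twist $\mathcal{S}(\heartsuit)$ and the symmetric power $\mathsf{Sym}^n\,\Omega^1_V$ are inherited from Proposition~\ref{det K is well defined} applied to each $(e+n)\times(e+n)$ submatrix $\widehat{\mathsf{K}}_j$, whose formal determinant is globally well-defined and transforms properly under changes of local trivializations.

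The only genuine subtlety — and thus the step where I would be most careful — is the sheaf-theoretic passage from $\Omega^1_{\mathbf{P}}$ to $\Omega^1_V$ that kills $\dformal F_q$ for $q\leqslant n$. Everything else is bookkeeping: Leibniz on $\dformal$ is already established, Proposition~\ref{very naive determinant of c+r very ample line bundle sections is still well defined} guarantees that each $\det\widehat{\mathsf{K}}_j$ is a bona fide global section on $\mathbf{P}$, and the Cramer identity is a formal manipulation of column vectors once the vanishing relation is in place. I would therefore spell out the conormal argument explicitly in a local trivialization $(U,s_q)$ of $\mathcal{S}_q$, where $\dformal F_q = \mathrm{d}(F_q/s_q)\cdot s_q$ and $F_q/s_q\in I_V(U)$, so that $\mathrm{d}(F_q/s_q)$ becomes zero in $\Omega^1_V(U\cap V)$ by the very definition of the cotangent sheaf of $V$.
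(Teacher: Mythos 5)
Your proposal is correct and follows essentially the same route as the paper: restrict to $X$, observe that the column sums of $\mathsf{K}$ vanish there (the $F_p$ because they cut out $X$, the $\dformal F_q$ because $\mathrm{d}(F_q/s_q)$ dies in $\Omega^1_V$ by definition of the cotangent sheaf of $V$), and conclude by Observation~\ref{naive Cramer's rule}. The paper merely makes the bookkeeping of the line-bundle twists explicit by factoring out the diagonal matrix $\mathsf{T}^U_{s_1,\dots,s_e}$ so that Cramer's rule is applied to entries in the honest ring $\mathsf{H}^0(U\cap X,\mathsf{Sym}^{\bullet}\Omega^1_V)$, which is exactly the local-trivialization step you flag at the end.
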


\begin{Observation}
\label{naive Cramer's rule}
In a commutative ring $R$, for all positive integers $N\geqslant 1$, let
$
A^0,
A^1,
\dots,
A^N
\in 
R^N
$ 
be $N+1$ column vectors satisfying:
\[
A^0\,
+
A^1\,
+
\cdots
+
A^N\,
=\,
\mathbf{0}.
\]
Then for all $0\leqslant j_1, j_2 \leqslant N$, there hold the identities:
\[
(-1)^{j_1}
\det\,
\big(
A^0,\dots,\widehat{A^{j_1}},\dots,A^N
\big)\,
=\,
(-1)^{j_2}
\det\,
\big(A^0,\dots,\widehat{A^{j_2}},\dots,A^N
\big).
\eqno
\qed
\]
\end{Observation}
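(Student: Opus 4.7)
The plan is to deduce everything from the linear relation $\sum_{k=0}^{N}A^k=\mathbf{0}$ by a single elementary column manipulation. By the symmetry of the claimed identity in $j_1$ and $j_2$, I may assume $j_1<j_2$.

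First I would start from $D_1:=\det(A^0,\dots,\widehat{A^{j_1}},\dots,A^N)$ and exploit the fact that adding any $R$-linear combination of the other columns to one fixed column leaves the determinant unchanged. Adding $\sum_{k\neq j_1,\,k\neq j_2}A^k$ to the column currently occupied by $A^{j_2}$ turns that column into $\sum_{k\neq j_1}A^k=-A^{j_1}$ by hypothesis. Factoring out the scalar $-1$, this shows that $D_1$ equals $-1$ times the determinant of the matrix obtained from $(A^0,\dots,\widehat{A^{j_1}},\dots,A^N)$ by replacing the entry $A^{j_2}$ with $A^{j_1}$.

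Next I would sort the columns of this new matrix into canonical (index-increasing) order. Since $j_2>j_1$, the column $A^{j_2}$ originally sat at position $j_2$ inside $D_1$, so after substitution $A^{j_1}$ now occupies position $j_2$ and must be transported back to position $j_1+1$; this costs exactly $j_2-j_1-1$ adjacent transpositions and contributes a sign $(-1)^{j_2-j_1-1}$. The resulting matrix is precisely $(A^0,\dots,\widehat{A^{j_2}},\dots,A^N)$ with determinant $D_2$. Combining the two signs yields $D_1=(-1)^{j_2-j_1}D_2$, and multiplying both sides by $(-1)^{j_1}$ gives the desired equality. No serious obstacle is anticipated here: the whole argument is one column operation plus a routine transposition count, so the only care needed is the sign bookkeeping from the permutation that reinserts $A^{j_1}$ into its canonical slot.
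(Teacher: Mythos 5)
Your proof is correct: the single column operation (adding the remaining columns to the $A^{j_2}$-column to turn it into $-A^{j_1}$), the factoring of $-1$, and the count of $j_2-j_1-1$ adjacent transpositions all go through over an arbitrary commutative ring, and the sign bookkeeping $(-1)^{j_1}(-1)^{j_2-j_1}=(-1)^{j_2}$ is right. The paper states this Observation without proof (it is the special case of its later Cramer's Rule with all $z_k=1$), and your argument is precisely the elementary verification it implicitly relies on, so nothing further is needed.
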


\begin{proof}[Proof of Proposition~\ref{naive forms coincide 3.3}]
Using the same notation as in~\thetag{\ref{definition of det K}}, for $j=0\cdots e+n$,
we obtain an $(e+n)\times(e+n)$ matrix
$(\widehat{\sf K}_j)_{s_1,\dots,s_e}^U$. We also define:
\[
\mathsf{K}\,(U,s_1,\dots,s_e)\,
:=\,
\mathsf{T}_{s_1,\dots,s_e}^U\,
\cdot\,
({\sf K})_{s_1,\dots,s_e}^U,
\]
where the $(e+n)\times(e+n+1)$ matrix $({\sf K})_{z_1,\dots,z_e}^U$
satisfies that, for $j=0\cdots e+n$,
the matrix 
$(\widehat{\sf K}_j)_{s_1,\dots,s_e}^U$
is obtained by omitting the
$(j+1)$-th column of $({\sf K})_{s_1,\dots,s_e}^U$.

We may view all entries of
$({\sf K})_{s_1,\dots,s_e}^U$
as sections in $\mathsf{H}^0(U\cap X, \mathsf{Sym}^{\bullet}\,\Omega_V^1)$, where:
\[
\mathsf{Sym}^{\bullet}\,\Omega_V^1\,
:=\,
\oplus_{k=0}^{\infty}\,
\mathsf{Sym}^{k}\,\Omega_V^1.
\]
Thus the sum of all columns of $({\sf K})_{s_1,\dots,s_e}^U$ vanishes, and hence Observation~\ref{naive Cramer's rule} yields:
\[
(-1)^{j_1}
\det\,
(\widehat{\sf K}_{j_1})_{s_1,\dots,s_e}^U\,
=\,
(-1)^{j_2}
\det\,
(\widehat{\sf K}_{j_2})_{s_1,\dots,s_e}^U\
\in\
\mathsf{H}^0(U\cap X, \mathsf{Sym}^{n}\,\Omega_V^1)
\qquad
{\scriptstyle{(j_1,\,j_2\,=\,0\,\cdots\,e+n)}}.
\]
By multiplication of $\det
\mathsf{T}_{s_1,\dots,s_e}^U$ on both sides, we conclude the proof.
\end{proof}

Remember that our goal is to construct negatively twisted
symmetric differential forms. 
One idea,
foreshadowed by the constructions in~\cite{Brotbek-2014-arxiv, Xie-2015-arxiv}, 
is to find some $e+n+1$ line bundles
$\mathcal{T}_0$, $\dots$, $\mathcal{T}_{e+n}$ with respective global sections
$t_0, \dots, t_{e+n}$ 
having empty base locus, 
such that the line bundle:
\[
\mathcal{S}(\heartsuit)
\otimes
\mathcal{T}_0^{-1}\otimes
\cdots
\otimes
\mathcal{T}_{e+n}^{-1}\,
=:\,
\mathcal{S}(\heartsuit')
\
<\
0,
\]
is negative,
and
such that:
\begin{equation}
\label{general idea}
\widehat{\omega}_{j}\,
:=\,
\frac{\psi_{j}}
{t_0\cdots t_{e+n}}\,
=\,
\frac{(-1)^j}
{t_0\cdots t_{e+n}}\,
\det\,
\widehat {\sf K}_{j}\
\in\
\mathsf{H}^0\,
\Big(
\mathsf{D}(t_j),\,
\mathsf{Sym}^n\,\Omega^1_{\mathbf{P}}\otimes 
\mathcal{S}(\heartsuit')
\Big)
\qquad
{\scriptstyle{(j\,=\,0\,\cdots\,e+n)}}
\end{equation}
have no poles. Then, these $e+n+1$ sections, restricted to $X$, would
glue together to make a global negatively twisted symmetric differential form:
\[
\omega\
\in\
\mathsf{H}^0\,
\Big(
X,\,
\mathsf{Sym}^n\,\Omega^1_{V}\otimes 
\mathcal{S}(\heartsuit')
\Big).
\]

For the purpose of~\thetag{\ref{general idea}},
we may require that
every $t_0,\dots, t_{e+n}$ subsequently `divides' the corresponding column
of ${\sf K}$ in the exact order. With some additional effort, we shall make this idea rigorous 
in our central applications.

\section{\bf A Dividing Trick}
\label{section: A Dividing Trick}
Let
$\mathcal{L}$ be a line bundle over $\mathbf{P}$ such that it has
$N+1$ global sections $\zeta_0,\dots,\zeta_N$ having empty common base locus.
Let $c\geqslant 1$, $r\geqslant 0$ be two integers with $2c+r\geqslant N$ and $c+r< N$. 
Let $\mathcal{A}_1,\dots,\mathcal{A}_{c+r}$ be
$c+r$ auxiliary line bundles to be determined.
Now, 
we consider
$c+r$ {\sl Fermat-type} sections having the same shape as~\thetag{\ref{General c+r Fermat type hypersurfaces}}:
\begin{equation}
\label{general Fermat-type sections for c+r line bundles}
F_i
=
\sum_{j=0}^N\,
\underline{
A_i^j
}
\,\zeta_j^{\lambda_j}\
\in\
\underline{
\mathcal{A}_i
\otimes
\mathcal{L}^{\epsilon_i^j}
}
\otimes
\mathcal{L}^{\lambda_j}
\,
=\,
\underbrace{
\mathcal{A}_i
\otimes
\mathcal{L}^{d_i}
}_{
=\,
\mathcal{S}_i
}
\qquad
{\scriptstyle{(i\,=\,1\cdots\,c+r)}},
\end{equation}
 where $\epsilon_i^j, \lambda_j, d_i\geqslant 1$ are integers 
satisfying 
$
\epsilon_i^j
+
\lambda_j
=d_i$,
and where every $A_i^j$ is some global section of
the line bundle 
$
\mathcal{A}_i
\otimes
\mathcal{L}^{\epsilon_i^j}$.

For the first $c$ equations of~\thetag{\ref{general Fermat-type sections for c+r line bundles}}, 
a formal differentiation yields:
\begin{equation}
\label{differentiating general Fermat-type sections}
\dformal
F_i\,
=\,
\sum_{j=0}^N\,
\dformal
\big(
A_i^j
\,\zeta_j^{\lambda_j}
\big)\,
=\,
\sum_{j=0}^N\,
\zeta_j^{\lambda_j-1}\,
\underbrace{
\big(
\zeta_j\,
\dformal
A_i^j
+
\lambda_j\,
A_i^j\,
\dformal
\zeta_j
\big)
}_{
=:\,\mathsf{B}_i^j
}
\qquad
{\scriptstyle{(i\,=\,1\cdots\,c)}}.
\end{equation}

Now, we construct the $(c+r+c) \times (N+1)$ matrix ${\sf M}$,
whose first $c+r$ rows
consist of all $(N+1)$ terms in the expressions~\thetag{\ref{general Fermat-type sections for c+r line bundles}} of $F_1,\dots,F_{c+r}$ in the exact order, 
and whose last $c$ rows consist of all $(N+1)$ terms in the expressions~\thetag{\ref{differentiating general Fermat-type sections}} of
$\dformal F_1,\dots,\dformal F_c$ in the exact order:
\begin{equation}
\label{matrix K = ...}
{\sf M}\,
:=\,
\begin{pmatrix}
A_1^0\,\zeta_0^{\lambda_0} & \cdots & A_1^N\,\zeta_N^{\lambda_N} \\
\vdots & & \vdots \\
A_{c+r}^0\,\zeta_0^{\lambda_0} & \cdots & A_{c+r}^N\,\zeta_N^{\lambda_N} \\[6pt]
\dformal \big(A_1^0\,\zeta_0^{\lambda_0}\big)
 & \cdots & 
\dformal \big(A_1^N\,\zeta_N^{\lambda_N}\big) \\
\vdots & & \vdots \\
\dformal \big(A_c^0\,\zeta_0^{\lambda_0}\big)
 & \cdots & 
\dformal \big(A_c^N\,\zeta_N^{\lambda_N}\big) \\
\end{pmatrix}\,
=\,
\begin{pmatrix}
A_1^0\,\zeta_0^{\lambda_0} & \cdots & A_1^N\,\zeta_N^{\lambda_N} \\
\vdots & & \vdots \\
A_{c+r}^0\,\zeta_0^{\lambda_0} & \cdots & A_{c+r}^N\,\zeta_N^{\lambda_N} \\[6pt]
{\sf B}_1^0\,\zeta_0^{\lambda_0-1} & \cdots & {\sf B}_1^N\,\zeta_N^{\lambda_N-1} \\
\vdots & & \vdots \\
{\sf B}_c^0\,\zeta_0^{\lambda_0-1} & \cdots & {\sf B}_c^N\,\zeta_N^{\lambda_N-1} \\
\end{pmatrix}.
\end{equation}

Denote $n:=N-c-r$, observe that $1\leqslant n\leqslant c$.
For every $1 \leqslant j_1 <
\dots < j_n \leqslant c$, denote by ${\sf M}_{j_1,\dots,j_n}$ the
$(c+r+n)\times (N+1)$ submatrix of ${\sf M}$ consisting of the first upper $c+r$
rows and the selected rows $c+r+j_1,\dots,c+r+j_n$. Also, for $j=0\cdots N$, denote by
$\widehat{{\sf M}}_{j_1,\dots,j_n;\,j}$ the submatrix of
${\sf M}_{j_1,\dots,j_n}$ obtained by omitting the $(j+1)$-th column.

Let $V\subset \mathbf{P}$ be the subvariety defined by
the first $c$ sections $F_1,\dots,F_c$, and let
$X\subset \mathbf{P}$ be the subvariety defined by
all the $c+r$ sections $F_1,\dots,F_{c+r}$.
Now, applying Proposition~\ref{naive forms coincide 3.3}, 
denoting:
\[
\mathcal{A}_{j_1,\dots,j_n}^{1,\dots,c+r}\,
:=\,
\mathcal{A}_{1}
\otimes
\cdots
\otimes
\mathcal{A}_{c+r}
\otimes
\mathcal{A}_{j_1}
\otimes
\cdots
\otimes
\mathcal{A}_{j_n},
\]
we receive

\begin{Proposition}
\label{corollary of naive forms}
For every $1 \leqslant j_1 <
\dots < j_n \leqslant c$,
for all $j=0\cdots N$, 
the $N+1$ sections: 
\[
\psi_{j_1,\dots,j_n;\,j}\,
=\,
(-1)^j
\det\,
\widehat{{\sf M}}_{j_1,\dots,j_n;\,j}\
\in\
\mathsf{H}^0\,\Big(\mathbf{P},\,\mathsf{Sym}^n\,\Omega^1_{\mathbf{P}}\otimes 
\mathcal{A}_{j_1,\dots,j_n}^{1,\dots,c+r}
\otimes 
\mathcal{L}^{\heartsuit_{j_1,\dots,j_n}}\Big),
\]
when restricted to $X$, give one and the same symmetric differential form:
\[
\psi_{j_1,\dots,j_n}\
\in\
\mathsf{H}^0\,
\Big(
X,\,
\mathsf{Sym}^n\,\Omega^1_{V}
\otimes
\mathcal{A}_{j_1,\dots,j_n}^{1,\dots,c+r}
\otimes 
\mathcal{L}^{\heartsuit_{j_1,\dots,j_n}}\Big),
\]
with the twisted degree:
\begin{equation}
\label{twisted degree of heart}
\heartsuit_{j_1,\dots,j_n}\,
=\,
\sum_{p=1}^{c+r}\,d_{p}
+
\sum_{q=1}^{n}\,d_{j_q}.
\end{equation} 
\end{Proposition}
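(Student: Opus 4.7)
The plan is to invoke Proposition~\ref{naive forms coincide 3.3} almost directly, after a mild rearrangement of rows. Taking $e := c+r$ and noting that $n = N - c - r$ satisfies $e + n + 1 = N + 1$, each Fermat-type section~\thetag{\ref{general Fermat-type sections for c+r line bundles}} is presented as a sum of $e + n + 1$ summands $A_i^j\,\zeta_j^{\lambda_j}$ belonging to the common line bundle $\mathcal{S}_i = \mathcal{A}_i \otimes \mathcal{L}^{d_i}$, while every formal differential~\thetag{\ref{differentiating general Fermat-type sections}} is a sum of $N+1$ terms. Hence the rows of $\mathsf{M}_{j_1,\dots,j_n}$ match the format of the matrix $\mathsf{K}$ in Proposition~\ref{naive forms coincide 3.3} after a permutation of the upper $c+r$ rows that sends $F_{j_1}, \dots, F_{j_n}$ to the top $n$ positions.

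Applying Proposition~\ref{naive forms coincide 3.3} to the rearranged matrix yields that the $N+1$ cofactor sections $(-1)^j \det \widehat{\mathsf{M}}_{j_1,\dots,j_n;\,j}$, when restricted to $X$, all coincide as sections of $\mathsf{Sym}^n \Omega^1_{\widetilde V}$ twisted appropriately, where $\widetilde V := \{F_{j_1} = 0\} \cap \cdots \cap \{F_{j_n} = 0\} \supset V$ is cut out \emph{only} by the $n$ selected hypersurfaces whose formal differentials enter the matrix. The overall sign incurred by the row permutation is the same for every cofactor, so the equality among the $\psi_{j_1,\dots,j_n;\,j}$ is preserved.

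Since $V \subset \widetilde V$, the conormal inclusion induces a natural surjection $\Omega^1_{\widetilde V}|_V \twoheadrightarrow \Omega^1_V$, whence $\mathsf{Sym}^n \Omega^1_{\widetilde V}|_X \twoheadrightarrow \mathsf{Sym}^n \Omega^1_V|_X$, and pushing the equality through this surjection delivers the common form $\psi_{j_1,\dots,j_n}$ living in $\mathsf{Sym}^n \Omega^1_V$ as claimed. Equivalently, one may apply Observation~\ref{naive Cramer's rule} directly in the graded ring $\bigoplus_k \mathsf{Sym}^k \Omega^1_V$ restricted to $X$: the upper rows of $\mathsf{M}_{j_1,\dots,j_n}$ sum column-wise to $F_i$, which vanish on $X$, while the lower rows sum to $\dformal F_{j_q}$, which vanish in $\Omega^1_V$ because each $j_q \leqslant c$ and the conormal sheaf of $V$ in $\mathbf{P}$ is generated by $F_1, \dots, F_c$.

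Finally, the twisted degree is obtained by routine bookkeeping: every entry of the $i$-th upper row lies in $\mathcal{A}_i \otimes \mathcal{L}^{d_i}$, and every entry of the $(c+r+q)$-th lower row lies in $\Omega^1_{\mathbf{P}} \otimes \mathcal{A}_{j_q} \otimes \mathcal{L}^{d_{j_q}}$, so any cofactor determinant is a section of $\mathsf{Sym}^n \Omega^1_{\mathbf{P}} \otimes \mathcal{A}_{j_1,\dots,j_n}^{1,\dots,c+r} \otimes \mathcal{L}^{\sum_{p=1}^{c+r} d_p + \sum_{q=1}^n d_{j_q}}$, matching the claimed expression for $\heartsuit_{j_1,\dots,j_n}$. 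The only conceptual subtlety --- not really an obstacle --- is the distinction between $V$ and $\widetilde V$, which reflects the freedom to choose the multi-index $(j_1, \dots, j_n)$ within $\{1, \dots, c\}$ rather than being constrained to use all of $\{1, \dots, c\}$.
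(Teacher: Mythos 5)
Your proposal is correct and takes essentially the same route as the paper, which obtains this proposition precisely by applying Proposition~\ref{naive forms coincide 3.3} (i.e.\ Observation~\ref{naive Cramer's rule} after local dehomogenization) to the matrix $\mathsf{M}_{j_1,\dots,j_n}$ with $e=c+r$, $n=N-c-r$. Your added care about the uniform sign under the row relabeling and the passage from $\Omega^1_{\widetilde V}$ to $\Omega^1_V$ through the conormal surjection merely makes explicit what the paper leaves implicit, and your line-bundle bookkeeping reproduces the stated twisted degree.
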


Observe in~\thetag{\ref{matrix K = ...}} that the $N+1$ columns of $\mathsf{M}$ are subsequently divisible by
$\zeta_0^{\lambda_0-1}, \dots, \zeta_N^{\lambda_N-1}$.
Dividing out these factors, we receive the formal matrix:
\[
{\sf C}
:=
\begin{pmatrix}
A_1^0\,\zeta_0 & \cdots & A_1^N\,\zeta_N \\
\vdots & & \vdots \\
A_{c+r}^0\,\zeta_0 & \cdots & A_{c+r}^N\,\zeta_N \\[6pt]
{\sf B}_1^0 & \cdots & {\sf B}_1^N \\
\vdots & & \vdots \\
{\sf B}_c^0 & \cdots & {\sf B}_c^N \\
\end{pmatrix}.
\]
By mimicking the notation of the submatrices 
${\sf M}_{j_1,\dots,j_n}$ ,
$\widehat{{\sf M}}_{j_1,\dots,j_n;\,j}$
of ${\sf M}$, we analogously define
the submatrices
${\sf C}_{j_1,\dots,j_n}$ ,
$\widehat{{\sf C}}_{j_1,\dots,j_n;\,j}$ of ${\sf C}$.

Now, we interpret Proposition~\ref{corollary of naive forms} in terms of the matrix $\mathsf{C}$, starting by the formal computation:
\[
\aligned
(-1)^{j}
\det\,
\widehat{{\sf M}}_{j_1,\dots,j_n;\,j}\,
&
=\,
(-1)^{j}\,
\zeta_0^{\lambda_0-1}
\cdots\,
\widehat{
\zeta_{j}^{\lambda_{j}-1}
} 
\cdots\,
\zeta_N^{\lambda_N-1}\,
\det\,
\widehat{{\sf C}}_{j_1,\dots,j_n;\,j}
\\
&
=\,
\frac
{
(-1)^{j}
}
{
\zeta_{j}^{\lambda_{j}-1}
}\,
\det\,
\widehat{{\sf C}}_{j_1,\dots,j_n;\,j}\,
\cdot\,
\underline{
\zeta_0^{\lambda_0-1}
\cdots\,
\zeta_N^{\lambda_N-1}
}.
\endaligned
\]
Dividing by
$\underline{\zeta_0^{\lambda_0-1}
\cdots\,
\zeta_N^{\lambda_N-1}}$ on both sides above, we receive the following $N+1$ `coinciding' forms:
\begin{equation}
\label{definition of Brotbek's symmetric differential forms}
\underbrace{
\frac
{
(-1)^{j}
\det\,
\widehat{{\sf M}}_{j_1,\dots,j_n;\,j}
}
{
\zeta_0^{\lambda_0-1}
\cdots\,
\zeta_N^{\lambda_N-1}
}
}_{
\text{independent of }j
}\,
=\,
\underbrace{
\frac
{
(-1)^{j}
}
{
\zeta_{j}^{\lambda_{j}-1}
}\,
\det\,
\widehat{{\sf C}}_{j_1,\dots,j_n;\,j}
}_{
\text{have no pole over }
\mathsf{D}(\zeta_j)
}
\qquad
{\scriptstyle{(j\,=\,0\,\cdots\,N)}}.
\end{equation}
This is the aforementioned {\sl dividing trick}.

\begin{Proposition}
\label{the symmetric differential forms with many very ample line bundle}
For all $j=0\cdots N$, the formal symmetric differential forms:
\[
\widehat\omega_{j_1,\dots,j_n;\,j}\,
=\,
\frac
{
(-1)^{j}
\det\,
\widehat{{\sf M}}_{j_1,\dots,j_n;\,j}
}
{
\zeta_0^{\lambda_0-1}
\cdots\,
\zeta_N^{\lambda_N-1}
}
\]
are well-defined sections in:
\[
\mathsf{H}^0\,
\Big(
\mathsf{D}(\zeta_j),\,
\mathsf{Sym}^n\,\Omega^1_{\mathbf{P}}
\otimes
\mathcal{A}_{j_1,\dots,j_n}^{1,\dots,c+r}
\otimes 
\mathcal{L}^{\heartsuit_{j_1,\dots,j_n}'}\Big),
\]
with the twisted degree:
\[
\heartsuit_{j_1,\dots,j_n}'\,
:=\,
\sum_{p=1}^{c+r}\,d_{p}
+
\sum_{q=1}^{n}\,d_{j_q}
-
\sum_{k=0}^{N}\,(\lambda_k-1).
\]
Moreover, when restricted to $X$, they
glue together to make a global section:
\[
\omega_{j_1,\dots,j_n}\
\in\
\mathsf{H}^0\,
\Big(
X,\,
\mathsf{Sym}^n\,\Omega^1_{V}
\otimes
\mathcal{A}_{j_1,\dots,j_n}^{1,\dots,c+r}
\otimes 
\mathcal{L}^{\heartsuit_{j_1,\dots,j_n}'}\Big).
\]
\end{Proposition}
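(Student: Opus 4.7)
The plan is to combine Proposition~\ref{corollary of naive forms} (which gives the section $\psi_{j_1,\dots,j_n;\,j}$ as a symmetric form on all of $\mathbf{P}$) with the dividing trick~\thetag{\ref{definition of Brotbek's symmetric differential forms}} (which cancels almost all of the denominator $\prod_k\zeta_k^{\lambda_k-1}$). The strategy has two steps: (i) verify that each $\widehat\omega_{j_1,\dots,j_n;\,j}$ makes sense as a regular section on $\mathsf{D}(\zeta_j)$ with the announced twisted degree, and (ii) show that these local sections agree on $X$ across overlaps and therefore patch to a global symmetric differential form on $X$.

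For step~(i), observe that in the matrix $\widehat{\sf M}_{j_1,\dots,j_n;\,j}$ one omits the $(j{+}1)$-th column of ${\sf M}$, so the remaining columns carry the respective common factors $\zeta_k^{\lambda_k-1}$ for $k\neq j$ that can be pulled out of the determinant by multi-linearity. This yields exactly the identity~\thetag{\ref{definition of Brotbek's symmetric differential forms}}, and after dividing by $\prod_k\zeta_k^{\lambda_k-1}$ the only denominator that survives is $\zeta_j^{\lambda_j-1}$. Since this factor is invertible precisely on $\mathsf{D}(\zeta_j)$, the rational expression $\widehat\omega_{j_1,\dots,j_n;\,j}$ defines there a regular section equal to $\tfrac{(-1)^j}{\zeta_j^{\lambda_j-1}}\det\widehat{\sf C}_{j_1,\dots,j_n;\,j}$. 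The twisted degree computation is then bookkeeping: by Proposition~\ref{corollary of naive forms} the numerator $\psi_{j_1,\dots,j_n;\,j}=(-1)^j\det\widehat{\sf M}_{j_1,\dots,j_n;\,j}$ lives in $\mathsf{Sym}^n\,\Omega^1_{\mathbf{P}}\otimes\mathcal{A}_{j_1,\dots,j_n}^{1,\dots,c+r}\otimes \mathcal{L}^{\heartsuit_{j_1,\dots,j_n}}$, while $\prod_k\zeta_k^{\lambda_k-1}\in\mathsf{H}^0(\mathbf{P},\mathcal{L}^{\sum_k(\lambda_k-1)})$, so the quotient lies in the line bundle twisted by $\mathcal{L}^{\heartsuit_{j_1,\dots,j_n}-\sum_k(\lambda_k-1)}=\mathcal{L}^{\heartsuit'_{j_1,\dots,j_n}}$, as announced.

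For step~(ii), fix two indices $j,j'\in\{0,\dots,N\}$ and restrict everything to $X\cap\mathsf{D}(\zeta_j)\cap\mathsf{D}(\zeta_{j'})$. By Proposition~\ref{corollary of naive forms}, the two numerators $\psi_{j_1,\dots,j_n;\,j}$ and $\psi_{j_1,\dots,j_n;\,j'}$ coincide as sections of $\mathsf{Sym}^n\,\Omega^1_V$ over $X$; dividing both by the single \emph{global} section $\prod_k\zeta_k^{\lambda_k-1}\in\mathsf{H}^0(\mathbf{P},\mathcal{L}^{\sum_k(\lambda_k-1)})$ therefore preserves this equality, so $\widehat\omega_{j_1,\dots,j_n;\,j}\big|_X=\widehat\omega_{j_1,\dots,j_n;\,j'}\big|_X$ on the overlap. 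Because $\zeta_0,\dots,\zeta_N$ have empty common base locus, the open sets $\mathsf{D}(\zeta_0),\dots,\mathsf{D}(\zeta_N)$ cover $\mathbf{P}$, hence also $X$; the local sections glue to a global section $\omega_{j_1,\dots,j_n}\in\mathsf{H}^0\!\big(X,\mathsf{Sym}^n\,\Omega^1_V\otimes\mathcal{A}_{j_1,\dots,j_n}^{1,\dots,c+r}\otimes \mathcal{L}^{\heartsuit'_{j_1,\dots,j_n}}\big)$. The main obstacle I expect is not logical but bookkeeping: one must be careful that the dividing trick is compatible with the formal-matrix framework of Definition~\ref{define formal differential}, i.e.\ that the factoring of $\zeta_k^{\lambda_k-1}$ out of a column containing formal differentials is legitimate; this is exactly the content of the derivation in~\thetag{\ref{differentiating general Fermat-type sections}}, where each entry $\dformal(A_i^k\zeta_k^{\lambda_k})$ has been explicitly rewritten as $\zeta_k^{\lambda_k-1}\cdot{\sf B}_i^k$, and no further subtlety arises.
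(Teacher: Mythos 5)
Your step~(i) can be made sound, but only once it is phrased the way the paper phrases it: since $\psi_{j_1,\dots,j_n;\,j}$ is an honest global section (Proposition~\ref{corollary of naive forms}) and $\prod_k\zeta_k^{\lambda_k-1}$ an honest global section of $\mathcal{L}^{\sum_k(\lambda_k-1)}$, regularity of the quotient on $\mathsf{D}(\zeta_j)$ is a local statement, checked in a trivialization where the determinant is a genuine determinant over a ring and the factors $(\zeta_k/\zeta_j)^{\lambda_k-1}$ really can be pulled out column by column --- this is the content of \eqref{easy formula below (17)} and \eqref{definition of the target symmetric form} in the paper's Steps 1--2. Pulling $\zeta_k^{\lambda_k-1}$ out of the \emph{formal} matrix ``by multi-linearity'', i.e.\ taking identity~\eqref{definition of Brotbek's symmetric differential forms} at face value, is exactly what the paper warns ``is not yet a proof by itself'': the divided matrix $\mathsf{C}$ has formal entries $\mathsf{B}_i^k$, and $\det\widehat{\mathsf{C}}_{j_1,\dots,j_n;\,j}$ is not by itself a well-defined section --- by Proposition~\ref{Observation: transition formula} it transforms by $(\zeta_{\ell_2}/\zeta_{\ell_1})^{\heartsuit(j)}$ under a change of $\mathcal{L}$-trivialization, and even that requires the Laplace-expansion trick because the differentiated rows pick up extra $\mathrm{d}(\zeta_{\ell_2}/\zeta_{\ell_1})$ terms. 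So ``no further subtlety arises'' underestimates the point, though this half of your argument is repairable.

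The genuine gap is in step~(ii). From $\psi_{j_1,\dots,j_n;\,j}\big|_X=\psi_{j_1,\dots,j_n;\,j'}\big|_X$ you conclude $\widehat\omega_{j}\big|_X=\widehat\omega_{j'}\big|_X$ by ``dividing by the global section $\prod_k\zeta_k^{\lambda_k-1}$''. What you actually obtain on $X\cap\mathsf{D}(\zeta_j)\cap\mathsf{D}(\zeta_{j'})$ is $\big(\widehat\omega_{j}-\widehat\omega_{j'}\big)\cdot\prod_{k\neq j,j'}\zeta_k^{\lambda_k-1}=0$ (only $\zeta_j,\zeta_{j'}$ are invertible there), and cancelling the remaining factor requires it to be a non-zero-divisor on the coherent sheaf $\mathsf{Sym}^n\,\Omega^1_V\big|_X\otimes\mathcal{A}_{j_1,\dots,j_n}^{1,\dots,c+r}\otimes\mathcal{L}^{\heartsuit'_{j_1,\dots,j_n}}$. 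The proposition is stated for arbitrary coefficients $A_i^j$: $X$ need not be reduced or irreducible, it may have components inside the hyperplanes $\{\zeta_k=0\}$, and $\Omega^1_V\big|_X$ may have torsion supported there; the difference could therefore be a nonzero torsion section annihilated by $\prod_{k\neq j,j'}\zeta_k^{\lambda_k-1}$, and your argument cannot exclude this. This is precisely why the paper's Step 3 never divides on $X$: it applies Cramer's rule to the divided matrix $(\mathsf{C})_{a_1,\dots,a_{c+r}}^{\zeta_j}$ inside the ring $\mathsf{H}^0(U\cap X,\mathsf{Sym}^{\bullet}\Omega^1_V)$ --- identity~\eqref{applying Cramer's rule} --- and combines it with the transition formula of Proposition~\ref{Observation: transition formula}, which gives $\widehat\omega_{\ell_1}=\widehat\omega_{\ell_2}$ unconditionally, using only the invertibility of $\zeta_{\ell_1},\zeta_{\ell_2}$ on the overlap. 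To keep your shortcut you would have to add (and verify in the applications, e.g.\ the ``hidden'' forms of Section~\ref{section: Hidden Symmetric Differential Forms}, where one deliberately restricts to coordinate-vanishing loci) hypotheses guaranteeing the non-zero-divisor property; as written, the proposal does not prove the proposition in the stated generality.
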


While the formal identity~\thetag{\ref{definition of Brotbek's symmetric differential forms}} transparently shows the essence of this proposition, it is not yet a proof
by itself, since both sides are to be defined. Indeed, to bypass the potential trouble of divisibility, the rigorous proof below is much more involved than one would first expect.

\begin{proof}
Without loss of generality, we only prove the case
$j_1=1,\dots,j_n=n$,
and we will often drop the indices $j_1,\dots,j_n$,
since no confusion could occur.
Here is a sketch of the proof.

\medskip
\noindent
{\em Step 1.}
Over each Zariski open set
$U\subset \mathsf{D}(\zeta_j)$
with trivializations
$
\mathcal{A}_1\big{\vert}_U
=
\mathcal{O}_U
\cdot
a_1$,
$\dots$,
$
\mathcal{A}_{c+r}\big{\vert}_U
=
\mathcal{O}_U
\cdot
a_{a+r}$, 
we compute the expression of
$\widehat\omega_{j}:=\widehat\omega_{j_1,\dots,j_n;\,j}$
in coordinates $(U,a_1,\dots,a_{c+r},\zeta_j)$. 

\medskip
\noindent
{\em Step 2.}
We show that the obtained symmetric form $\widehat\omega_{j}\big{\vert}_U$
is independent of the choices of trivializations
$a_1,\dots,a_{c+r}$, whence we conclude the first claim.

\medskip
\noindent
{\em Step 3.}
For any distinct indices $0\leqslant \ell_1, \ell_2\leqslant N$,
over any Zariski open set
$U\subset \mathsf{D}(\zeta_{\ell_1})\cap\mathsf{D}(\zeta_{\ell_2})$
with trivializations
$
\mathcal{A}_1\big{\vert}_U
=
\mathcal{O}_U
\cdot
a_1$,
$\dots$,
$
\mathcal{A}_{c+r}\big{\vert}_U
=
\mathcal{O}_U
\cdot
a_{a+r}$, 
we show that:
\begin{equation}
\label{show that the target form glue together}
\widehat\omega_{\ell_1}\,
=\,
\widehat\omega_{\ell_2}\
\in\
\mathsf{H}^0\,
\Big(
U,\,
\mathsf{Sym}^n\,\Omega^1_{V}
\otimes
\mathcal{A}_{j_1,\dots,j_n}^{1,\dots,c+r}
\otimes 
\mathcal{L}^{\heartsuit_{j_1,\dots,j_n}'}\Big)
\end{equation}
by computations in coordinates.
Thus we conclude the second claim.

\medskip
\noindent
{\em Proof of Step 1.}
Recalling~\thetag{\ref{definition of formal matrix K}},
by trivializations: 
\[
\underbrace{
\mathcal{A}_i
\otimes
\mathcal{L}^{d_i}
}_{=:\,\mathcal{S}_i}\big{\vert}_U\,
=\,
\mathcal{O}_U
\cdot 
\underbrace{
a_i\,\zeta_j^{d_i}
}_{
=:\,s_i
}
\qquad
{\scriptstyle{(i\,=\,1\cdots\,c+r)}},
\]
the formal matrix $\mathsf{K}:=\mathsf{M}_{j_1,\dots,j_n}$ has coordinates:
\begin{equation}
\label{very formal reltion}
\mathsf{K}\,
=\,
\mathsf{T}_{a_1,\dots,a_{c+r}}^{\zeta_j}\,
\cdot\,
\underbrace{
\Big(
F_1/s_1,
\dots,
F_{c+r}/s_{c+r},
\mathrm{d}\,\big(F_1/s_1\big), 
\dots,
\mathrm{d}\,\big(F_n/s_n\big)
\Big)^{\mathrm{T}}
}_{
=:\,(\mathsf{K})_{a_1,\dots,a_{c+r}}^{\zeta_j}
},
\end{equation}
where $\mathsf{T}_{a_1,\dots,a_{c+r}}^{\zeta_j}$
is an $N\times N$ diagonal matrix
with the diagonal $s_1,\dots, s_{c+r}, s_1, \dots, s_n$,
and where like~\thetag{\ref{dehomogenize K with respect to z_1, ..., z_e}} we abbreviate the first $c+r$ rows of $\mathsf{K}$ by $F_1,\dots,F_{c+r}$.
Further computation yields:
\[
F_i\,/\,s_i\,
:=\,
\big(
A_i^0
\,\zeta_0^{\lambda_0},
\dots,
A_i^N
\,\zeta_N^{\lambda_N}
\big)\,/\,s_i
=\,
\Big(
{A_i^0}/
{\alpha_i^0}
\cdot
(\zeta_0/\zeta_{j})^{\lambda_0},
\dots,
{A_i^N}
/
{\alpha_i^N}
\cdot
(\zeta_N/\zeta_{j})^{\lambda_N}
\Big)
\qquad
{\scriptstyle{(i\,=\,0\,\cdots\,c+r)}},
\]
where $\alpha_i^j:=a_i\,\cdot\, \zeta_j^{\epsilon_i^j}$.
`Dividing' every column of 
$(\mathsf{K})_{a_1,\dots,a_{c+r}}^{\zeta_j}$ subsequently
by $(\zeta_0/\zeta_{j})^{\lambda_0-1},\dots,(\zeta_N/\zeta_{j})^{\lambda_N-1}$,
we obtain an $N\times (N+1)$ matrix
$
(\mathsf{C})_{a_1,\dots,a_{c+r}}^{\zeta_j}
$. 
For every $\ell=0\cdots N$, we denote by 
$(\widehat{\mathsf{C}}_{\ell})_{a_1,\dots,a_{c+r}}^{\zeta_j}$
the submatrix of 
$
(\mathsf{C})_{a_1,\dots,a_{c+r}}^{\zeta_j}
$
obtained by deleting its $(\ell+1)$-th column.
Now, formula~\thetag{\ref{definition of det K}} yields:
\begin{equation}
\label{easy formula below (17)}
\det\,
\widehat{{\sf M}}_{j_1,\dots,j_n;\,j}\,
=\,
s_1
\cdots
s_{c+r}\,
s_1
\cdots
s_n\,
(\zeta_0/\zeta_{j})^{\lambda_0-1}
\cdots
(\zeta_N/\zeta_{j})^{\lambda_N-1}\,
\det\,
(\widehat{\mathsf{C}}_{j})_{a_1,\dots,a_{c+r}}^{\zeta_j}.
\end{equation}
Thus, in coordinates $(U,a_1,\dots,a_{c+r},\zeta_j)$, we obtain\big/define:
{\footnotesize
\begin{equation}
\label{definition of the target symmetric form}
\aligned
\widehat\omega_{j}\,
&
=\,
\frac
{
(-1)^{j}
\det\,
\widehat{{\sf M}}_{j_1,\dots,j_n;\,j}
}
{
\zeta_0^{\lambda_0-1}
\cdots\,
\zeta_N^{\lambda_N-1}
}\,
:=\,
(-1)^{j}\,
\frac
{s_1
\cdots
s_{c+r}\,
s_1
\cdots
s_n}
{\zeta_{j}^{(\lambda_0-1)+\cdots
+(\lambda_N-1)}}\,
\cdot\,
\det\,
(\widehat{\mathsf{C}}_{j})_{a_1,\dots,a_{c+r}}^{\zeta_j}
\\
&
=\,
(-1)^{j}\,
a_1
\cdots
a_{c+r}
\cdot
a_1
\cdots
a_n\,
\cdot\,
\zeta_j^{\heartsuit_{j_1,\dots,j_n}'}\,
\cdot\,
\det\,
(\widehat{\mathsf{C}}_{j})_{a_1,\dots,a_{c+r}}^{\zeta_j}
\\
&
\in\
\mathsf{H}^0\,
\Big(
U,\,
\mathsf{Sym}^n\,\Omega^1_{\mathbf{P}}
\otimes
\mathcal{A}_{j_1,\dots,j_n}^{1,\dots,c+r}
\otimes 
\mathcal{L}^{\heartsuit_{j_1,\dots,j_n}'}\Big).
\endaligned
\end{equation}
}

\medskip
\noindent
{\em Proof of Step 2.}
We only need to show that:
\[
a_1
\cdots
a_{c+r}\,
a_1
\cdots
a_n\,
\cdot\,
\det\,
(\widehat{\mathsf{C}}_{j})_{a_1,\dots,a_{c+r}}^{\zeta_j}\
\in\
\mathsf{H}^0\,
\Big(
U,\,
\mathsf{Sym}^n\,\Omega^1_{\bf P}
\otimes
\mathcal{A}_{j_1,\dots,j_n}^{1,\dots,c+r}
\Big)
\]
is independent of the choices of $a_1,\dots,a_{c+r}$.

Let $\widetilde{a}_1,\dots,\widetilde{a}_{c+r}$ be any other choices of
invertible sections of $\mathcal{A}_1\big{\vert}_U, \dots,\mathcal{A}_{c+r}\big{\vert}_U$. Accordingly, we
 obtain the  matrices 
$({\mathsf{C}})_{\widetilde{a}_1,\dots,\widetilde{a}_{c+r}}^{\zeta_j}$, $(\widehat{{\mathsf{C}}}_{j})_{\widetilde{a}_1,\dots,\widetilde{a}_{c+r}}^{\zeta_j}$, and we denote $\widetilde{\alpha}_i^j:=\widetilde{a}_i\,\cdot\, \zeta_j^{\epsilon_i^j}$.
Then, for $i=1\cdots c+r$, the $i$-th row of the  matrix $({\mathsf{C}})_{\widetilde{a}_1,\dots,\widetilde{a}_{c+r}}^{\zeta_j}$ is:
{\footnotesize
\[
\explain{
$
{\alpha}_i^k/\widetilde{\alpha}_i^k
=
a_i/\widetilde{a}_i
$\,}
\qquad
\Big(
A_i^0/
\widetilde{\alpha}_i^0
\cdot
(\zeta_0/\zeta_{j}),
\dots,
A_i^N/
\widetilde{\alpha}_i^N
\cdot
(\zeta_N/\zeta_{j})
\Big)\,
=\,
a_i/\widetilde{a}_i\,
\cdot\,
\underbrace{
\Big(
A_i^0/
{\alpha}_i^0
\cdot
(\zeta_0/\zeta_{j}),
\dots,
A_i^N/
{\alpha}_i^N
\cdot
(\zeta_N/\zeta_{j})
\Big)
}_{
\text{the } i\text{-th row
of the matrix }
(\mathsf{C})_{a_1,\dots,a_{c+r}}^{\zeta_j}
}.
\]
}\!\!
Also, for $i=1\cdots n$, $k=0\cdots N$, 
using:
\[
\mathrm{d}\,
({A_i^k}/
{\widetilde{\alpha}}_i^k)\,
=\,
\mathrm{d}\,
({A_i^k}/
{\alpha_i^k}
\cdot
a_i/\widetilde{a}_i
)
=
a_i/\widetilde{a}_i
\cdot
\mathrm{d}\,
({A_i^k}/
{\alpha_i^k}
)
+
{A_i^k}/
{\alpha_i^k}
\cdot
\mathrm{d}\,
(
a_i/\widetilde{a}_i
),
\]
we see that the $(c+r+i, k+1)$-th entry
of $({\mathsf{C}})_{\widetilde{a}_1,\dots,\widetilde{a}_{c+r}}^{\zeta_j}$ satisfies:
\begin{equation}
\label{compute something}
\aligned
&
(\zeta_k/\zeta_{j})
\cdot
\mathrm{d}\,
({A_i^k}/
{\widetilde{\alpha}_i^k})
+
\lambda_k\,
({A_i^k}/
{\widetilde{\alpha}_i^k})
\cdot
\mathrm{d}\,
(\zeta_k/\zeta_{j})\,
\\
=\,\,
&
a_i/\widetilde{a}_i
\cdot
\underbrace{
\big[
(\zeta_k/\zeta_{j})
\cdot
\mathrm{d}\,
({A_i^k}/
{\alpha_i^k})
+
\lambda_k\,
({A_i^k}/
{\alpha_i^k})
\cdot
\mathrm{d}\,
(\zeta_k/\zeta_{j})
\big]
}_{
=\,
\text{the } (c+r+i,\,k+1)\text{-th entry
of } (\mathsf{C})_{a_1,\dots,a_{c+r}}^{\zeta_j}
}\,
+\,
\mathrm{d}\,
(a_i/\widetilde{a}_i)
\cdot
\underbrace{
{A_i^k}/
{\alpha_i^k}
\cdot
(\zeta_k/\zeta_{j})
}_{
=\,
(i,\,k+1)\text{-th entry}
}.
\endaligned
\end{equation}
Hence we receive the transition identity:
\[
({\mathsf{C}})_{\widetilde{a}_1,\dots,\widetilde{a}_{c+r}}^{\zeta_j}\,
=\,
\mathsf{T}_{\widetilde{a}_1,\dots,\widetilde{a}_{c+r}}^{a_1,\dots,a_{c+r}}\,
\cdot\,
(\mathsf{C})_{a_1,\dots,a_{c+r}}^{\zeta_j},
\]
where $\mathsf{T}_{\widetilde{a}_1,\dots,\widetilde{a}_{c+r}}^{a_1,\dots,a_{c+r}}$ is a lower triangular matrix
with the product of the diagonal:
\[
\det\,
\mathsf{T}_{\widetilde{a}_1,\dots,\widetilde{a}_{c+r}}^{a_1,\dots,a_{c+r}}\,
=\,
\frac
{
a_1
\cdots 
a_{c+r}
\cdot
a_1
\cdots 
a_{n}
}
{
\widetilde{a}_1
\cdots 
\widetilde{a}_{c+r}
\cdot
\widetilde{a}_1
\cdots 
\widetilde{a}_n
}.
\] 
In particular, we have:
\[
(\widehat{{\mathsf{C}}}_{j})_{\widetilde{a}_1,\dots,\widetilde{a}_{c+r}}^{\zeta_j}\,
=\,
\mathsf{T}_{\widetilde{a}_1,\dots,\widetilde{a}_{c+r}}^{a_1,\dots,a_{c+r}}\,
\cdot\,
(\widehat{\mathsf{C}}_{j})_{a_1,\dots,a_{c+r}}^{\zeta_j},
\]
hence, by taking determinant on both sides above, we obtain:
\[
\widetilde{a}_1
\cdots 
\widetilde{a}_{c+r}
\cdot
\widetilde{a}_1
\cdots 
\widetilde{a}_n\,
\cdot\,
\det\,
(\widehat{{\mathsf{C}}}_{j})_{\widetilde{a}_1,\dots,\widetilde{a}_{c+r}}^{\zeta_j}\,
=\,
a_1
\cdots
a_{c+r}\,
a_1
\cdots
a_n\,
\cdot\,
\det\,
(\widehat{\mathsf{C}}_{j})_{a_1,\dots,a_{c+r}}^{\zeta_j},
\]
which is our desired identity.

\medskip
\noindent
{\em Proof of Step 3.}
First of all, we recall the famous 

\medskip\noindent
{\bf Cramer's Rule.}
{\em \
In a commutative ring $R$, for all positive integers $N\geqslant 1$, let
$
A^0,
A^1,
\dots,
A^N
\in 
R^N
$ 
be $N+1$ column vectors, and suppose that $z_0,z_1,\dots,z_N\in R$ satisfy:
\[
A^0\,z_0
+
A^1\,z_1
+
\cdots
+
A^N\,z_N\,
=\,
\mathbf{0}.
\]
Then for all indices $0\leqslant \ell_1, \ell_2 \leqslant N$, there hold the identities:
\[
(-1)^{\ell_1}
\det\,
\big(
A^0,\dots,\widehat{A^{\ell_1}},\dots,A^N
\big)\,
z_{\ell_2}\,
=\,
(-1)^{\ell_2}
\det\,
\big(
A^0,\dots,\widehat{A^{\ell_2}},\dots,A^N
\big)\,
z_{\ell_1}.
\eqno
\qed
\]
}

In the rest of the proof, we shall view all entries of the matrices
$(\mathsf{K})_{a_1,\dots,a_{c+r}}^{\zeta_j}$,
$
(\mathsf{C})_{a_1,\dots,a_{c+r}}^{\zeta_j}
$ 
as elements in the ring
$\mathsf{H}^0(U\cap X, \mathsf{Sym}^{\bullet}\,\Omega_V^1)$.
Note that the sum of all columns of $(\mathsf{K})_{a_1,\dots,a_{c+r}}^{\zeta_j}$ vanishes:
{\footnotesize
\[
C_0\,
\Big(\frac{\zeta_0}{\zeta_j}\Big)^{\lambda_0-1}
+
\cdots
+
C_N\,
\Big(\frac{\zeta_N}{\zeta_j}\Big)^{\lambda_N-1}
\,
=\,
\mathbf{0},
\]
}
where we denote the $(i+1)$-th row of
$
(\mathsf{C})_{a_1,\dots,a_{c+r}}^{\zeta_j}
$
by
$C_i$.
Applying Cramer's rule, we receive:
{\footnotesize
\begin{equation}
\label{applying Cramer's rule}
(-1)^{\ell_1}\,
\det
(\widehat{\mathsf{C}}_{\ell_1})_{a_1,\dots,a_{c+r}}^{\zeta_j}\,
\cdot\,
\Big(\frac{\zeta_{\ell_2}}{\zeta_j}\Big)^{\lambda_{\ell_2}-1}\,
=\,
(-1)^{\ell_2}\,
\det
(\widehat{\mathsf{C}}_{\ell_2})_{a_1,\dots,a_{c+r}}^{\zeta_j}\,
\cdot\,
\Big(\frac{\zeta_{\ell_1}}{\zeta_j}\Big)^{\lambda_{\ell_1}-1}\
\in\
\mathsf{H}^0\,
\Big(
U
\cap
X,\,
\mathsf{Sym}^n\,\Omega^1_{V}
\Big)
\quad
{\scriptstyle(0\,\leqslant\,\ell_1,\,\ell_2\,\leqslant\,N)}.
\end{equation}
}

Lastly, we can check the desired identity~\thetag{\ref{show that the target form glue together}} by the following computation:
{\footnotesize
\[
\aligned
\widehat\omega_{\ell_1}\,
&
=\,
(-1)^{\ell_1}\,
a_1
\cdots
a_{c+r}
\cdot
a_1
\cdots
a_n\,
\cdot\,
\zeta_{\ell_1}^{\heartsuit_{j_1,\dots,j_n}'}\,
\cdot\,
\det\,
(\widehat{\mathsf{C}}_{\ell_1})_{a_1,\dots,a_{c+r}}^{\zeta_{\ell_1}}
\qquad
\explain{use~\thetag{\ref{definition of the target symmetric form}}}
\\
\explain{use~\thetag{\ref{applying Cramer's rule}}
for $j=\ell_1$}
\qquad
&
=\,
(-1)^{\ell_2}\,
a_1
\cdots
a_{c+r}
\cdot
a_1
\cdots
a_n\,
\cdot\,
\zeta_{\ell_1}^{\heartsuit_{j_1,\dots,j_n}'}\,
\cdot\,
\det\,
(\widehat{\mathsf{C}}_{\ell_2})_{a_1,\dots,a_{c+r}}^{\zeta_{\ell_1}}\,
\cdot\,
\Big(\frac{\zeta_{\ell_1}}{\zeta_{\ell_2}}\Big)^{\lambda_{\ell_2}-1}\,
\\
\explain{use Proposition~\ref{Observation: transition formula} below}
\qquad
&
=\,
(-1)^{{\ell_2}}\,
a_1
\cdots
a_{c+r}
\cdot
a_1
\cdots
a_n\,
\cdot\,
\zeta_{\ell_1}^{\heartsuit_{j_1,\dots,j_n}'}\,
\cdot\,
\Big(
\frac{\zeta_{\ell_2}}
{\zeta_{\ell_1}}
\Big)^{\heartsuit({\ell_2})}\,
\cdot\,
\det\,
(\widehat{\mathsf{C}}_{\ell_2})_{a_1,\dots,a_{c+r}}^{\zeta_{\ell_2}}\,
\cdot\,
\Big(\frac{\zeta_{\ell_1}}{\zeta_{\ell_2}}\Big)^{\lambda_{\ell_2}-1}
\\
\explain{$
\heartsuit(\ell_2)
=
\heartsuit_{j_1,\dots,j_n}'
+\lambda_{\ell_2}-1$}
\qquad
&
=\,
(-1)^{\ell_2}\,
a_1
\cdots
a_{c+r}
\cdot
a_1
\cdots
a_n\,
\cdot\,
\zeta_{\ell_2}^{\heartsuit_{j_1,\dots,j_n}'}\,
\cdot\,
\det\,
(\widehat{\mathsf{C}}_{\ell_2})_{a_1,\dots,a_{c+r}}^{\zeta_{\ell_2}}\,
\\
\explain{use~\thetag{\ref{definition of the target symmetric form}}}
\qquad
&
=\,
\widehat\omega_{\ell_2}
\qquad
\explain{\smiley}.
\endaligned
\]
}
Thus we finish the proof.
\end{proof}

An essential ingredient in the above proof is to compare the same determinant in different trivializations $\zeta_{\ell_1}$, $\zeta_{\ell_2}$.
Now we give general transition formulas.

\begin{Proposition}
\label{Observation: transition formula}
For all $0\leqslant j,\,\ell_1,\, \ell_2 \leqslant N$,
for any Zariski open set 
$U\subset \mathsf{D}(\zeta_{\ell_1})\cap\mathsf{D}(\zeta_{\ell_2})$
with trivializations 
$
\mathcal{A}_1\big{\vert}_U
=
\mathcal{O}_U
\cdot
a_1$,
$\dots$,
$
\mathcal{A}_{c+r}\big{\vert}_U
=
\mathcal{O}_U
\cdot
a_{a+r}$,
there hold the transition formulas:
\begin{equation}
\label{nice transition identity, two coordinates}
\det\,
(\widehat{\mathsf{C}}_{j})_{a_1,\dots,a_{c+r}}^{\zeta_{\ell_1}}\,
=\,
\Big(
\frac{\zeta_{\ell_2}}
{\zeta_{\ell_1}}
\Big)^{\heartsuit(j)}\,
\cdot\,
\det\,
(\widehat{\mathsf{C}}_{j})_{a_1,\dots,a_{c+r}}^{\zeta_{\ell_2}}\
\in\
\mathsf{H}^0\,
\Big(
U,\,
\mathsf{Sym}^n\,\Omega^1_{\mathbf{P}}\Big),
\end{equation}
with 
$
\heartsuit(j)
=
\heartsuit_{j_1,\dots,j_n}'
+\lambda_{j}-1
$.
\end{Proposition}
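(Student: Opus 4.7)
The plan is to trace how the matrix $(\widehat{\mathsf{C}}_{j})_{a_1,\dots,a_{c+r}}^{\zeta_\ell}$ transforms when one replaces $\zeta_{\ell_1}$ by $\zeta_{\ell_2}$ (keeping the trivializations $a_1,\dots,a_{c+r}$ of $\mathcal{A}_1,\dots,\mathcal{A}_{c+r}$ fixed), by combining two ingredients already in hand: the transition formula~\thetag{\ref{concrete transition relation, well-defined}} for $(\mathsf{K})$-matrices under a change of trivializations, and the column-scaling that passes from $(\mathsf{K})$ to $(\mathsf{C})$ via the dividing trick.

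First, I observe that, for each $\ell\in\{\ell_1,\ell_2\}$, the matrix $(\mathsf{C})_{a_1,\dots,a_{c+r}}^{\zeta_{\ell}}$ is obtained from $(\mathsf{K})_{a_1,\dots,a_{c+r}}^{\zeta_{\ell}}$ by dividing its $(k+1)$-th column by $(\zeta_k/\zeta_\ell)^{\lambda_k-1}$, so that $(\mathsf{K})_{a,\zeta_\ell}=(\mathsf{C})_{a,\zeta_\ell}\cdot D_\ell$ with $D_\ell:=\mathrm{diag}\bigl((\zeta_k/\zeta_\ell)^{\lambda_k-1}\bigr)_{k=0,\dots,N}$. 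Next, applying the argument behind Proposition~\ref{very naive determinant of c+r very ample line bundle sections is still well defined} to the two trivializations $s_i=a_i\,\zeta_{\ell_1}^{d_i}$ and $s'_i=a_i\,\zeta_{\ell_2}^{d_i}$ of $\mathcal{S}_i=\mathcal{A}_i\otimes\mathcal{L}^{d_i}$, I obtain $(\mathsf{K})_{a,\zeta_{\ell_2}}=\mathsf{T}\cdot(\mathsf{K})_{a,\zeta_{\ell_1}}$, where $\mathsf{T}$ is lower triangular with diagonal entries $(\zeta_{\ell_1}/\zeta_{\ell_2})^{d_i}$ (in the first $c+r$ algebraic rows) and $(\zeta_{\ell_1}/\zeta_{\ell_2})^{d_{j_q}}$ (in the $n$ selected differential rows). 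Combining these identities yields
\[
(\mathsf{C})_{a,\zeta_{\ell_2}}\,=\,\mathsf{T}\cdot (\mathsf{C})_{a,\zeta_{\ell_1}}\cdot D_{\ell_1}D_{\ell_2}^{-1},
\]
in which $D_{\ell_1}D_{\ell_2}^{-1}$ is diagonal with entries $(\zeta_{\ell_2}/\zeta_{\ell_1})^{\lambda_k-1}$.

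Restricting to the submatrix obtained by deleting the $(j+1)$-th column and taking determinants --- noting that the off-diagonal entries of the lower triangular $\mathsf{T}$ do not contribute --- I receive
\[
\det(\widehat{\mathsf{C}}_j)_{a,\zeta_{\ell_2}}\,=\,\det(\mathsf{T})\cdot\prod_{k\neq j}(\zeta_{\ell_2}/\zeta_{\ell_1})^{\lambda_k-1}\cdot\det(\widehat{\mathsf{C}}_j)_{a,\zeta_{\ell_1}}.
\]
A direct exponent count gives $\det(\mathsf{T})=(\zeta_{\ell_2}/\zeta_{\ell_1})^{-\heartsuit_{j_1,\dots,j_n}}$ where $\heartsuit_{j_1,\dots,j_n}=\sum_{p=1}^{c+r}d_p+\sum_{q=1}^{n}d_{j_q}$, while the product equals $(\zeta_{\ell_2}/\zeta_{\ell_1})^{\sum_{k}(\lambda_k-1)-(\lambda_j-1)}$; invoking the relation $\heartsuit_{j_1,\dots,j_n}'=\heartsuit_{j_1,\dots,j_n}-\sum_k(\lambda_k-1)$, the total exponent of $\zeta_{\ell_2}/\zeta_{\ell_1}$ simplifies to $-(\heartsuit_{j_1,\dots,j_n}'+\lambda_j-1)=-\heartsuit(j)$, and rearranging delivers the desired formula~\thetag{\ref{nice transition identity, two coordinates}}. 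The main obstacle is purely bookkeeping: keeping the direction of the trivialization change consistent and accurately tracking which $\zeta_{\ell_2}/\zeta_{\ell_1}$-exponents come from $\mathsf{T}$, from the dividing trick, and from the omitted column, so that the factor $(\zeta_{\ell_2}/\zeta_{\ell_1})^{\heartsuit(j)}$ ends up on the correct side with the correct sign.
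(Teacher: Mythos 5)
Your argument is correct, and it takes a genuinely different route from the paper's proof. The paper argues entry by entry: it derives the transition identities~\thetag{\ref{transition identities between two matrices in each entry}}, observes that each differential row picks up an extra term proportional to $\mathrm{d}\,(\zeta_{\ell_2}/\zeta_{\ell_1})$, kills these terms by noting that the $2\times 2$ minors pairing the algebraic row $q$ with its differential row $c+r+q$ transform by pure scaling~\thetag{\ref{central observation of Prop. 4.3}}, and then reassembles the determinant through the Laplace expansion~\thetag{\ref{Laplace expansion of Prop 4.3}} organized along those row pairs. You instead package the change from $\zeta_{\ell_1}$ to $\zeta_{\ell_2}$ globally: left multiplication of the rectangular matrix $(\mathsf{K})_{a_1,\dots,a_{c+r}}^{\zeta_{\ell}}$ by the lower triangular transition matrix of~\thetag{\ref{concrete transition relation, well-defined}} (applied to the two trivializations $a_i\,\zeta_{\ell_1}^{d_i}$, $a_i\,\zeta_{\ell_2}^{d_i}$ of $\mathcal{S}_i$), combined with the diagonal column rescalings of the dividing trick; the troublesome $\mathrm{d}$-terms then sit strictly below the diagonal of $\mathsf{T}$ and never enter $\det\mathsf{T}$, column deletion is compatible with both multiplications, and your exponent bookkeeping ($-\heartsuit_{j_1,\dots,j_n}+\sum_k(\lambda_k-1)-(\lambda_j-1)=-\heartsuit(j)$) agrees with the paper's. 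The one step you should tighten is the intermediate factorization $(\mathsf{C})^{\zeta_{\ell_2}}=\mathsf{T}\cdot(\mathsf{C})^{\zeta_{\ell_1}}\cdot D_{\ell_1}D_{\ell_2}^{-1}$: it formally inverts $D_{\ell_2}$, whose entries $(\zeta_k/\zeta_{\ell_2})^{\lambda_k-1}$ may vanish on $U$. This is harmless but needs a word of justification --- either verify the $\mathsf{T}$-and-diagonal relation entry-wise (the same short Leibniz computation that produces~\thetag{\ref{transition identities between two matrices in each entry}}; your factorization is exactly those identities in matrix form), or prove~\thetag{\ref{nice transition identity, two coordinates}} on the dense open subset $U\cap\{\zeta_0\cdots\zeta_N\neq 0\}$, where all the $D_\ell$ are invertible, and extend by density since both sides are regular on $U$. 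With that remark in place your proof is complete; it buys brevity and a conceptual reuse of Proposition~\ref{very naive determinant of c+r very ample line bundle sections is still well defined}, whereas the paper's entry-wise route keeps every manipulation inside regular sections and yields the explicit $2\times 2$-minor invariance as a byproduct.
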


\begin{proof}
Our idea is to expand the two determinants and to compare  each pair of corresponding terms.
Without loss of generality, we may assume $j=0$.

For $i=1\cdots N$, $k=1\cdots N$, we denote
the $(i, k)$-th entry of $(\widehat{\mathsf{C}}_{0})_{a_1,\dots,a_{c+r}}^{\zeta_{\ell_1}}$ 
(resp. $(\widehat{\mathsf{C}}_{0})_{a_1,\dots,a_{c+r}}^{\zeta_{\ell_2}}$)
by $c_{i,j}^1$ (resp. $c_{i,j}^2$).
First of all, we recall all the entries:
\[
\footnotesize
\aligned
c_{p,k}^{\delta}\,
:=\,
\frac{A_p^k}{a_p\cdot \zeta_{\ell_\delta}^{\epsilon_p^k}}\,
\cdot\,
\frac{\zeta_k}
{\zeta_{\ell_\delta}},
\qquad
c_{c+r+q,k}^{\delta}\,
:=\,
\mathrm{d}\,
\bigg(
\frac{A_q^k}{a_q\cdot \zeta_{\ell_{\delta}}^{\epsilon_q^k}}
\bigg)\,
\cdot\,
\frac{\zeta_k}
{\zeta_{\ell_{\delta}}}
+
\lambda_k\,
\frac{A_q^k}{a_q\cdot \zeta_{\ell_{\delta}}^{\epsilon_q^k}}\,
\cdot\,
\mathrm{d}\,
\Big(
\frac{\zeta_k}
{\zeta_{\ell_{\delta}}}
\Big)
\qquad
{\scriptstyle(\delta\,=\,1,\,2;\,\,  
p\,=\,1\,\cdots\, c+r;\,\, q\,=\,1\,\cdots\, n;\,\,
k\,=\,1\,\cdots\,N)}.
\endaligned
\]
By much the same reasoning as in~\thetag{\ref{compute something}},
 we can obtain the transition formulas:
\[
\aligned
c_{p,k}^{1}\,
&
=\,
c_{p,k}^{2}\,
\cdot\,
(\zeta_{\ell_2}/\zeta_{\ell_1})^{\epsilon_p^k+1},
\\
c_{c+r+q,k}^{1}\,
&
=\,
c_{c+r+q,k}^{2}\,
\cdot\,
(\zeta_{\ell_2}/\zeta_{\ell_1})^{\epsilon_q^k+1}\,
+\,
c_{q,k}^{2}\,
\cdot\,
(\epsilon_q^k+\lambda_k)\,
(\zeta_{\ell_2}/\zeta_{\ell_1})^{\epsilon_q^k}\,
\mathrm{d}\,
(
\zeta_{\ell_2}/\zeta_{\ell_1}
).
\endaligned
\]
Recalling that $\epsilon_p^k+\lambda_k=d_p$,
we thus rewrite the above identities as:
\begin{equation}
\label{transition identities between two matrices in each entry}
\aligned
c_{p,k}^{1}\,
&
=\,
c_{p,k}^{2}\,
\cdot\,
(\zeta_{\ell_2}/\zeta_{\ell_1})^{d_p-(\lambda_k-1)},
\\
c_{c+r+q,k}^{1}\,
&
=\,
c_{c+r+q,k}^{2}\,
\cdot\,
(\zeta_{\ell_2}/\zeta_{\ell_1})^{d_q-(\lambda_k-1)}\,
+\,
\underline{
c_{q,k}^{2}\,
\cdot\,
d_q\,
(\zeta_{\ell_2}/\zeta_{\ell_1})^{d_q-\lambda_k}\,
\mathrm{d}\,
(
\zeta_{\ell_2}/\zeta_{\ell_1}
)
}.
\endaligned
\end{equation}

Now, comparing~\thetag{\ref{transition identities between two matrices in each entry}} with the desired formula~\thetag{\ref{nice transition identity, two coordinates}}, we may anticipate that, the underlined terms would bring some trouble,
since no terms
$
\mathrm{d}\,
(
\zeta_{\ell_2}/\zeta_{\ell_1}
)
$ 
appear on the right-hand-side of~\thetag{\ref{nice transition identity, two coordinates}}.
Nevertheless, we can overcome this difficulty firstly by 
observing:
\begin{equation}
\label{central observation of Prop. 4.3}
\begin{vmatrix}
c_{q,k_1}^{1}
&
c_{q,k_2}^{1}
\\
c_{c+r+q,k_1}^{1}
&
c_{c+r+q,k_2}^{1}
\end{vmatrix}\,
=\,
\begin{vmatrix}
c_{q,k_1}^{2}
&
c_{q,k_2}^{2}
\\
c_{c+r+q,k_1}^{2}
&
c_{c+r+q,k_2}^{2}
\end{vmatrix}\,
\cdot\,
(\zeta_{\ell_2}/\zeta_{\ell_1})^{d_q-(\lambda_{k_1}-1)}\,
\cdot\,
(\zeta_{\ell_2}/\zeta_{\ell_1})^{d_{q}-(\lambda_{k_2}-1)}
\qquad
{\scriptstyle( 
q\,=\,1\,\cdots\, n;\,\,
k_1,\,k_2\,=\,1\,\cdots\,N)},
\end{equation}
and secondly by using a tricky Laplace expansion of the determinant:
\begin{equation}
\label{Laplace expansion of Prop 4.3}
\det\,
(\widehat{\mathsf{C}}_{0})_{a_1,\dots,a_{c+r}}^{\zeta_{\ell_1}}\,
=\,
\sum\,
\mathsf{Sign}(\pm)\,
\cdot
\prod_{q=1}^n\,
\begin{vmatrix}
c_{q,k_q^1}^{1}
&
c_{q,k_q^2}^{1}
\\
c_{c+r+q,k_q^1}^{1}
&
c_{c+r+q,k_q^2}^{1}
\end{vmatrix}\,
\cdot\,
\prod_{p=n+1}^{c+r}\,
c_{p,k_p}^{1},
\end{equation}
where the sum runs through all choices of $N=2n+(c+r-n)$ indices
$k_1^1<k_1^2$, $\dots$, $k_n^1<k_n^2$, $k_{n+1},\dots,k_{c+r}$ such that their union is exactly
$\{1,\dots,N\}$, and where
$\mathsf{Sign}(\pm)$ is either $1$ or $-1$
uniquely determined by the choices of indices. 
Now, using~\thetag{\ref{central observation of Prop. 4.3}}, we see that each term in~\thetag{\ref{Laplace expansion of Prop 4.3}} is equal 
to:
\[
\mathsf{Sign}(\pm)\,
\cdot
\prod_{q=1}^n\,
\begin{vmatrix}
c_{q,k_q^1}^{2}
&
c_{q,k_q^2}^{2}
\\
c_{c+r+q,k_q^1}^{2}
&
c_{c+r+q,k_q^2}^{2}
\end{vmatrix}\,
\cdot\,
\prod_{p=n+1}^{c+r}\,
c_{p,k_p}^{2}
\]
multiplied by
$(\zeta_{\ell_2}/\zeta_{\ell_1})^{\heartsuit}$,
where:
\[
\aligned
\heartsuit\,
&
:=\,
\sum_{q=1}^n\,
\Big[
d_q-(\lambda_{k_q}^1-1)\,
+\,
d_{q}-(\lambda_{k_q}^2-1)
\Big]\,
+\,
\sum_{p=n+1}^{c+r}\,
\big(
d_q-(\lambda_{k_p}-1)
\big)
\\
&
\,\,=\,
\sum_{p=1}^{c+r}\,
d_p\,
+\,
\sum_{q=1}^{n}\,
d_q\,
-\,
\sum_{k=1}^{N}\,
(\lambda_k-1)\,
\\
&
\,\,=\,
\heartsuit_{j_1,\dots,j_n}'
+\lambda_{0}-1.
\endaligned
\]
Thus~\thetag{\ref{Laplace expansion of Prop 4.3}}
factors as:
\[
\aligned
\det\,
(\widehat{\mathsf{C}}_{0})_{a_1,\dots,a_{c+r}}^{\zeta_{\ell_1}}\,
&
=\,
(\zeta_{\ell_2}/\zeta_{\ell_1})^{\heartsuit}\,
\cdot\,
\sum\,
\mathsf{Sign}(\pm)\,
\cdot
\prod_{q=1}^n\,
\begin{vmatrix}
c_{q,k_1^q}^{2}
&
c_{q,k_2^q}^{2}
\\
c_{c+r+q,k_1^q}^{2}
&
c_{c+r+q,k_2^q}^{2}
\end{vmatrix}\,
\cdot\,
\prod_{p=n+1}^{c+r}\,
c_{p,k_p}^{2}
\\
\explain{use Laplace expansion again}
\qquad
&
=\,
(\zeta_{\ell_2}/\zeta_{\ell_1})^{\heartsuit}\,
\cdot\,
\det\,
(\widehat{\mathsf{C}}_{0})_{a_1,\dots,a_{c+r}}^{\zeta_{\ell_2}},
\endaligned
\]
whence we conclude the proof.
\end{proof}

\section{\bf `Hidden' Symmetric Differential Forms}
\label{section: Hidden Symmetric Differential Forms}
Comparing the two approaches
in~\cite[Section 6]{Xie-2015-arxiv},
the scheme-theoretic one has the advantage in further generalizations, while
the geometric one is superior in discovering
the `hidden' symmetric differential forms~~\cite[Proposition 6.12]{Xie-2015-arxiv}.
Skipping the thinking process, we present the corresponding generalizations of these symmetric forms
as follows. 

We assume that $\lambda_0,\dots,\lambda_N\geqslant 2$
in this section.
For any 
$\eta=1\cdots n-1$, for any indices
$
0\leqslant v_1<\dots<v_{\eta}\leqslant N
$
and
$
1\leqslant j_1 < \cdots < j_{n-\eta} \leqslant c
$,
write 
$
\{0,\dots,N\}
\setminus 
\{v_1,\dots,v_{\eta}\}
$ 
in the ascending order
$
r_0<r_1<\cdots<r_{N-\eta}
$, and then
denote by ${}_{v_1,\dots,v_\eta}{\sf M}_{j_1,\dots,j_{n-\eta}}$ 
the
$(N-\eta)\times (N-\eta+1)$ submatrix of ${\sf M}$
 determined by the first $c+r$ rows and the selected rows
$c+r+j_1,\dots,c+r+j_{n-\eta}$ as well as
 the $(N-\eta+1)$ columns $r_0+1,\dots,r_{N-\eta}+1$.
Next, for every index $j\in \{0,\dots,N\}\setminus \{v_1,\dots,v_{\eta}\}$, let
${}_{v_1,\dots,v_{\eta}}\widehat{{\sf M}}_{j_1,\dots,j_{n-\eta};\,j}$ 
denote the submatrix of ${}_{v_1,\dots,v_{\eta}}{\sf M}_{j_1,\dots,j_{n-\eta}}$ 
obtained by deleting the column which is originally contained in the
$(j+1)$-th column of ${\sf M}$.
 Lastly, denote by ${}_{v_1,\dots,v_{\eta}}\mathbf{P}\subset\mathbf{P}$ 
the subvariety defined by sections $\zeta_{v_1},\dots,\zeta_{v_\eta}$ (`vanishing coordinates'), 
and denote ${}_{v_1,\dots,v_{\eta}}X:=X\cap {}_{v_1,\dots,v_{\eta}}\mathbf{P}$.
Setting:
\[
\mathcal{A}_{j_1,\dots,j_{n-\eta}}^{1,\dots,c+r}\,
:=\,
\mathcal{A}_{1}
\otimes
\cdots
\otimes
\mathcal{A}_{c+r}
\otimes
\mathcal{A}_{j_1}
\otimes
\cdots
\otimes
\mathcal{A}_{n-\eta},
\]
by much the same reasoning as in Proposition~\ref{corollary of naive forms}, 
we have

\begin{Proposition}
\label{general naive determinant with vanishing coordinates is well defined}
For all $j=0\cdots N-\eta$, the following $N+1-\eta$
sections: 
\[
\aligned\,
{}_{v_1,\dots,v_{\eta}}\psi_{j_1,\dots,j_{n-\eta};\,r_j}\,
&
:=\,
(-1)^j
\det
\big(
{}_{v_1,\dots,v_{\eta}}\widehat{{\sf M}}_{j_1,\dots,j_{n-\eta};\,r_j}
\big)\\
&\,\,
\in\
\mathsf{H}^0\,
\Big(
{}_{v_1,\dots,v_{\eta}}\mathbf{P},\,\mathsf{Sym}^{n-\eta}\,\Omega^1_{\mathbf{P}}
\otimes
\mathcal{A}_{j_1,\dots,j_{n-\eta}}^{1,\dots,c+r}
\otimes 
\mathcal{L}^{\heartsuit_{j_1,\dots,j_{n-\eta}}}
\Big),
\endaligned
\]
when restricted to ${}_{v_1,\dots,v_{\eta}}X$, give one and the same  symmetric differential form:
\[
{}_{v_1,\dots,v_{\eta}}\psi_{j_1,\dots,j_{n-\eta}}\
\in\
\mathsf{H}^0\,
\Big(
{}_{v_1,\dots,v_{\eta}}X,\,
\mathsf{Sym}^{n-\eta}\,\Omega^1_{V}
\otimes
\mathcal{A}_{j_1,\dots,j_{n-\eta}}^{1,\dots,c+r}
\otimes 
\mathcal{L}^{\heartsuit_{j_1,\dots,j_{n-\eta}}}
\Big),
\]
with the twisted degree:
\[
\heartsuit_{j_1,\dots,j_{n-\eta}}\,
=\,
\sum_{p=1}^{c+r}\,d_{p}
+
\sum_{q=1}^{n-\eta}\,d_{j_q}.
\eqno
\qed
\]
\end{Proposition}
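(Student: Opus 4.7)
The plan is to mirror the proof of Proposition~\ref{naive forms coincide 3.3}, with the observation that after restricting to the vanishing locus ${}_{v_1,\dots,v_\eta}\mathbf{P}$ exactly $\eta$ columns of $\mathsf{M}$ die identically, and Cramer-type cancellation then occurs among the surviving $N+1-\eta$ columns once we further restrict to ${}_{v_1,\dots,v_\eta}X$.

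First I would verify the column vanishing. For each $k=1\cdots\eta$, the $(v_k+1)$-th column of $\mathsf{M}$ as written in~\thetag{\ref{matrix K = ...}} has upper entries $A_p^{v_k}\,\zeta_{v_k}^{\lambda_{v_k}}$ and lower entries ${\sf B}_q^{v_k}\,\zeta_{v_k}^{\lambda_{v_k}-1}$. Thanks to the standing assumption $\lambda_{v_k}\geqslant 2$, both $\lambda_{v_k}$ and $\lambda_{v_k}-1$ are strictly positive, so every such entry carries at least one factor of $\zeta_{v_k}$ and hence vanishes on ${}_{v_1,\dots,v_\eta}\mathbf{P}$. Consequently, the submatrix ${}_{v_1,\dots,v_\eta}\mathsf{M}_{j_1,\dots,j_{n-\eta}}$ arises from the row-selected version of $\mathsf{M}$ by discarding exactly these $\eta$ already-zero columns, and is of size $(N-\eta)\times(N+1-\eta)$.

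Next, following the dehomogenization computation of~\thetag{\ref{definition of formal matrix K}}--\thetag{\ref{definition of det K}}, one factors each relevant determinant locally as a diagonal twist matrix times an ordinary matrix of local sections. On $X\cap{}_{v_1,\dots,v_\eta}\mathbf{P}={}_{v_1,\dots,v_\eta}X$, the defining relations $F_p=0$ for $p=1,\dots,c+r$ together with $\dformal F_q=0$ modulo $\Omega^1_V|_X$ for $q=1,\dots,c$ force the sum of all $N+1$ dehomogenized columns of $\mathsf{M}$ to vanish in $\mathsf{H}^0(U\cap{}_{v_1,\dots,v_\eta}X,\,\mathsf{Sym}^\bullet\Omega^1_V)$, exactly as in the proof of Proposition~\ref{naive forms coincide 3.3}. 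Combined with the column-vanishing from the preceding step, the sum of the $N+1-\eta$ surviving columns must itself vanish on ${}_{v_1,\dots,v_\eta}X$; applying Observation~\ref{naive Cramer's rule} (with $N$ replaced by $N-\eta$) then yields that the $N+1-\eta$ signed determinants $(-1)^j\det\bigl({}_{v_1,\dots,v_\eta}\widehat{\mathsf{M}}_{j_1,\dots,j_{n-\eta};\,r_j}\bigr)$ restrict to one and the same section on ${}_{v_1,\dots,v_\eta}X$.

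The line-bundle twist is a routine row-by-row count as in Proposition~\ref{corollary of naive forms}: the first $c+r$ rows contribute $\otimes_{p=1}^{c+r}(\mathcal{A}_p\otimes\mathcal{L}^{d_p})$, each of the $n-\eta$ selected differential rows contributes a factor of $\Omega^1_\mathbf{P}$ together with $\mathcal{A}_{j_q}\otimes\mathcal{L}^{d_{j_q}}$, and symmetrizing the differentials gives the stated target $\mathsf{Sym}^{n-\eta}\,\Omega^1_\mathbf{P}\otimes\mathcal{A}_{j_1,\dots,j_{n-\eta}}^{1,\dots,c+r}\otimes\mathcal{L}^{\heartsuit_{j_1,\dots,j_{n-\eta}}}$. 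I do not foresee a serious obstacle; the one subtle point is to confirm that, in the local dehomogenized picture, the $\eta$ vanishing columns retain the required factor of $\zeta_{v_k}$ after dividing out by the chosen trivializing sections (which is automatic since the diagonal factorization affects only rows, not columns), so that the final column-sum identity genuinely holds on ${}_{v_1,\dots,v_\eta}X$ and not merely on ${}_{v_1,\dots,v_\eta}\mathbf{P}$.
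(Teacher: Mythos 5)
Your proposal is correct and follows exactly the route the paper intends: the paper proves this proposition only by invoking ``much the same reasoning as in Proposition~\ref{corollary of naive forms}'', and your write-up supplies precisely those details --- the hypothesis $\lambda_{v_k}\geqslant 2$ makes the $\eta$ deleted columns (including their formal-differential entries, which carry $\zeta_{v_k}^{\lambda_{v_k}-1}$) vanish on ${}_{v_1,\dots,v_{\eta}}\mathbf{P}$, so the column-sum relation on $X$ descends to the surviving $N+1-\eta$ columns on ${}_{v_1,\dots,v_{\eta}}X$ and Observation~\ref{naive Cramer's rule} applies. The twist bookkeeping you give also matches the stated $\heartsuit_{j_1,\dots,j_{n-\eta}}$, so no gap remains.
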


Moreover, playing the dividing trick again, we obtain an analogue of Proposition~\ref{the symmetric differential forms with many very ample line bundle}.

\begin{Proposition}
\label{the hidden general symmetric differential forms}
For all $j=0\cdots N-\eta$,
the formal symmetric differential forms:
\[
{}_{v_1,\dots,v_{\eta}}\widehat\omega_{j_1,\dots,j_{n-\eta};\,r_j}\,
=\,
\frac
{
(-1)^j
\det
\big(
{}_{v_1,\dots,v_{\eta}}\widehat{{\sf M}}_{j_1,\dots,j_{n-\eta};\,r_j}
\big)
}
{
\zeta_{r_0}^{\lambda_{r_0}-1}
\cdots
\zeta_{r_{N-\eta}}^{\lambda_{r_{N-\eta}}-1}
}
\]
are
well-defined
sections in:
\[
\mathsf{H}^0\,
\Big(
\mathsf{D}(\zeta_{r_j})\cap
{}_{v_1,\dots,v_{\eta}}\mathbf{P},\,\mathsf{Sym}^{n-\eta}\,\Omega^1_{\mathbf{P}}
\otimes
\mathcal{A}_{j_1,\dots,j_{n-\eta}}^{1,\dots,c+r}
\otimes 
\mathcal{L}^{{}_{v_1,\dots,v_{\eta}}\heartsuit_{j_1,\dots,j_{n-\eta}}'}
\Big),
\]
with the twisted degree:
\[
{}_{v_1,\dots,v_{\eta}}\heartsuit_{j_1,\dots,j_{n-\eta}}'\,
:=\,
\sum_{p=1}^{c+r}\,d_{p}
+
\sum_{q=1}^{n-\eta}\,d_{j_q}
-
\sum_{k=0}^{N}\,(\lambda_k-1)
+
\sum_{\mu=1}^{\eta}\,(\lambda_{v_{\mu}}-1).
\]
Moreover, when restricted to ${}_{v_1,\dots,v_{\eta}}X$, they
glue together to make a global section:
\[
{}_{v_1,\dots,v_{\eta}}\omega_{j_1,\dots,j_{n-\eta}}\
\in\
\mathsf{H}^0\,
\Big(
{}_{v_1,\dots,v_{\eta}}X,\,
\mathsf{Sym}^{n-\eta}\,\Omega^1_{V}
\otimes
\mathcal{A}_{j_1,\dots,j_{n-\eta}}^{1,\dots,c+r}
\otimes 
\mathcal{L}^{{}_{v_1,\dots,v_{\eta}}\heartsuit_{j_1,\dots,j_{n-\eta}}'}
\Big).
\eqno
\qed
\]
\end{Proposition}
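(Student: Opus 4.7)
The plan is to adapt, \emph{mutatis mutandis}, the three-step argument proving Proposition~\ref{the symmetric differential forms with many very ample line bundle}, working now on the subvariety ${}_{v_1,\dots,v_\eta}\mathbf{P}$ and dividing only by the non-vanishing coordinates $\zeta_{r_0},\dots,\zeta_{r_{N-\eta}}$. The decisive structural observation is that the standing hypothesis $\lambda_k\geqslant 2$ forces every entry of the columns of ${\sf M}$ labelled by $v_1,\dots,v_\eta$ to carry a positive power of $\zeta_{v_\mu}$ (namely $\zeta_{v_\mu}^{\lambda_{v_\mu}}$ in the upper block and $\zeta_{v_\mu}^{\lambda_{v_\mu}-1}$, with $\lambda_{v_\mu}-1\geqslant 1$, in the lower block), hence these columns vanish identically upon restriction to ${}_{v_1,\dots,v_\eta}\mathbf{P}$. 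Thus the effective matrix over ${}_{v_1,\dots,v_\eta}\mathbf{P}$ is exactly ${}_{v_1,\dots,v_\eta}{\sf M}_{j_1,\dots,j_{n-\eta}}$, of size $(N-\eta)\times(N-\eta+1)$, sharing the very same shape as $\mathsf{M}_{j_1,\dots,j_n}$ in Proposition~\ref{the symmetric differential forms with many very ample line bundle} with $N$ replaced by $N-\eta$ and $n$ replaced by $n-\eta$.

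For Step~1, over any Zariski open $U\subset \mathsf{D}(\zeta_{r_j})$ with trivializations $\mathcal{A}_i\big{\vert}_U=\mathcal{O}_U\cdot a_i$ and the induced trivializations $\mathcal{S}_i\big{\vert}_U=\mathcal{O}_U\cdot s_i$ for $s_i:=a_i\,\zeta_{r_j}^{d_i}$, I would factor the determinant following the template~\thetag{\ref{easy formula below (17)}}, extracting $s_1\cdots s_{c+r}\cdot s_{j_1}\cdots s_{j_{n-\eta}}$ together with $\prod_{k\ne j}(\zeta_{r_k}/\zeta_{r_j})^{\lambda_{r_k}-1}$. Dividing by $\prod_{k=0}^{N-\eta}\zeta_{r_k}^{\lambda_{r_k}-1}$ then yields an expression polynomial in $\zeta_{r_j}^{-1}$, and a direct bookkeeping confirms that its twist equals ${}_{v_1,\dots,v_{\eta}}\heartsuit_{j_1,\dots,j_{n-\eta}}'$. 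For Step~2, independence of the choice of trivializations $a_1,\dots,a_{c+r}$ follows from the very same lower-triangular transition argument as in Proposition~\ref{the symmetric differential forms with many very ample line bundle}, which is insensitive to whether the column index set is the full $\{0,\dots,N\}$ or the truncated $\{r_0,\dots,r_{N-\eta}\}$.

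For Step~3, I would use the vanishing of the $v_\mu$-columns on ${}_{v_1,\dots,v_\eta}X$ to deduce that the identities $F_i\big{\vert}_X=0$ and $\dformal F_i\big{\vert}_X=0$ translate into a linear relation $\sum_{k=0}^{N-\eta} C_{r_k}\,(\zeta_{r_k}/\zeta_{r_j})^{\lambda_{r_k}-1}=\mathbf{0}$ between the columns of the dehomogenized matrix. Cramer's rule then yields the analogue of~\thetag{\ref{applying Cramer's rule}} with indices restricted to $\{r_0,\dots,r_{N-\eta}\}$, and a restricted version of Proposition~\ref{Observation: transition formula}, obtained by the same Laplace expansion trick of~\thetag{\ref{Laplace expansion of Prop 4.3}}, glues the local pieces into a global section on ${}_{v_1,\dots,v_\eta}X$.

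The main obstacle will be the careful rewriting of the transition exponent in this restricted Laplace expansion: the sum $\sum_{k=0}^N(\lambda_k-1)$ must be replaced by $\sum_{k\notin\{v_1,\dots,v_\eta\}}(\lambda_k-1)=\sum_{k=0}^N(\lambda_k-1)-\sum_{\mu=1}^\eta(\lambda_{v_\mu}-1)$, which is precisely what produces the extra summand $\sum_{\mu=1}^\eta(\lambda_{v_\mu}-1)$ appearing in the definition of ${}_{v_1,\dots,v_{\eta}}\heartsuit_{j_1,\dots,j_{n-\eta}}'$. Once this bookkeeping is settled, the whole scheme transports directly.
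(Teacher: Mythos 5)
Your proposal is correct and matches the paper's intent exactly: the paper gives no separate proof, merely asserting that one ``plays the dividing trick again,'' i.e.\ reruns the three-step argument of Proposition~\ref{the symmetric differential forms with many very ample line bundle} on ${}_{v_1,\dots,v_\eta}\mathbf{P}$ with the column set $\{r_0,\dots,r_{N-\eta}\}$, which is precisely your plan. You also correctly isolate the two points where the adaptation is non-vacuous — the hypothesis $\lambda_k\geqslant 2$ forcing the $v_\mu$-columns to vanish on the subvariety, and the replacement of $\sum_{k=0}^N(\lambda_k-1)$ by $\sum_{k\notin\{v_1,\dots,v_\eta\}}(\lambda_k-1)$ which accounts for the extra summand in ${}_{v_1,\dots,v_{\eta}}\heartsuit_{j_1,\dots,j_{n-\eta}}'$.
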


\section{\bf Applications of MCM}
\label{Section: Applications of MCM}
\subsection{Motivation}
Recall~\cite[Section 7]{Xie-2015-arxiv} that the moving coefficients method is devised to produce
as many negatively twisted symmetric differential forms as possible, by  manipulating the determinantal structure of the constructed symmetric differential forms. 
Since Propositions~\ref{the symmetric differential forms with many very ample line bundle}, \ref{the hidden general symmetric differential forms} 
exactly share the same determinantal shape,
it is possible to adapt MCM for the aim of
Theorem~\ref{Gentle generalization of Debarre conjecture}, which coincides with Theorem~\ref{Debarre Ampleness Conjecture} in the case that 
$\mathbf{P}=\mathbb{P}_{\mathbb{K}}^N$, $\mathcal{L}=\mathcal{O}_{\mathbb{P}_{\mathbb{K}}^N}(1)$.
Indeed, by introducing $c$ auxiliary line bundles $\mathcal{A}_1,\dots, \mathcal{A}_{c}\approx$ trivial line bundle, we can even treat the case of $c$  ample line bundles $\mathcal{L}+\mathcal{A}_1$, $\dots$, $\mathcal{L}+\mathcal{A}_c\approx \mathcal{L}$, and eventually we will obtain Theorem~\ref{Main Theorem, 2nd}.

\subsection{Adaptation}
\label{subsection: Adaptation}
Let $\mathbf{P}$ be a smooth projective
$\mathbb{K}$-variety of dimension $N$, equipped with a very ample line bundle $\mathcal{L}$.
By Bertini's theorem, we may choose
$N+1$ simple normal crossing global sections $\zeta_0,\dots,\zeta_N$ of $\mathcal{L}$,
and we shall view them as the `homogeneous coordinates'
of $\mathbf{P}$. Thus, we may
`identify' 
$(\mathbf{P}, \mathcal{L})$ with $\big(\mathbb{P}_{\mathbb{K}}^N, \mathcal{O}_{\mathbb{P}_{\mathbb{K}}^N}(1)\big)$
in the sense that locally they have the same coordinates $[\xi_0:\cdots:\zeta_N]\approx [z_0:\cdots:z_N]$, and therefore we can generalize local computations of the later one to the former one,
like what we perform in Section~\ref{section: A Dividing Trick}.
This treatment is also visible in~\cite{Brotbek-Lionel-2015}.

Let $c\geqslant 1$, $r\geqslant 0$ be two integers with $2c+r\geqslant N$ and $c+r< N$, and  
let $\mathcal{A}_1,\dots,\mathcal{A}_{c+r}$ be
$c+r$ auxiliary line bundles to be determined.
Now, we start to adapt the machinery of MCM.
First of all, introduce the following $c+r$ `flexible' sections which copy the major ingredient {\bf (3)}:
\begin{equation}
\label{F_i-moving-coefficient-method-full-strenghth}
\aligned
F_i\,
=\,
\sum_{j=0}^N\,
A_i^j\,
\zeta_j^{d}
\,
+
\,
\sum_{l=c+r+1}^{N}\,
\sum_{0\leqslant j_0<\cdots<j_l\leqslant N}\,
\sum_{k=0}^l\,
&
M_i^{j_0,\dots,j_l;j_k}\,
\zeta_{j_0}^{\mu_{l,k}}
\cdots
\widehat{\zeta_{j_k}^{\mu_{l,k}}}
\cdots
\zeta_{j_l}^{\mu_{l,k}}
\zeta_{j_k}^{d-l\mu_{l,k}}
\\
&
\in\
\mathsf{H}^0\,(\mathbf{P},\,\mathcal{A}_i\otimes \mathcal{L}^{\epsilon_i}\otimes \mathcal{L}^{d})
\qquad
{\scriptstyle(i\,=\,1\,\cdots\,c+r)},
\endaligned
\end{equation}
where all coefficients $A_i^{\bullet}, M_i^{\bullet;\bullet}$ are some global sections of $\mathcal{A}_i\otimes \mathcal{L}^{\epsilon_i}$
for some fixed integers $\epsilon_i\geqslant 1$, and where all positive integers $\mu^{l,k}, d$ are to be chosen by a certain {\sl Algorithm}, which is designed to make all the symmetric differential forms obtained later have {\sl negative twist}.
For better comprehension, we will make the Algorithm clear in Subsection~\ref{Subsection: Delayed Algorithm} below,
and for the time being we roughly summarize it as:
\begin{equation}
\label{rough algorithm}
1
\leqslant
\max\,\{\epsilon_i\}_{i=1\cdots c+r}
\ll
\underbrace{
\mu_{c+r+1,0}
\ll
\cdots
\ll
\mu_{c+r+1,c+r+1}
}_{
\mu_{c+r+1,\bullet}
\text{ grow exponentially}
}
\ll
\cdots
\cdots
\ll
\underbrace{
\mu_{N,0}
\ll
\cdots
\ll
\mu_{N,N}
}_{
\mu_{N,\bullet}
\text{ grow exponentially}
}
\ll
d.
\end{equation}

Let $V\subset \mathbf{P}$ be the subvariety defined by
the first $c$ sections $F_1,\dots,F_c$, and let
$X\subset \mathbf{P}$ be the subvariety defined by
all the $c+r$ sections $F_1,\dots,F_{c+r}$.
{\em A priori}, we require all the line bundles
$\mathcal{A}_i\otimes \mathcal{L}^{\epsilon_i}$
to be very ample, so that 
for generic choices of parameters: 
\[
A_i^{\bullet}, 
M_i^{\bullet;\bullet}\
\in\
\mathsf{H}^0\,(\mathbf{P},\,\mathcal{A}_i\otimes \mathcal{L}^{\epsilon_i})
\qquad
{\scriptstyle{(i\,=\,1\,\cdots\,c+r)}},
\] 
both intersections $V$, $X$ are smooth complete (the proof is much the same as that of Bertini's Theorem,
see Subsection~\ref{subsection: Bertini-type assertions}).

\subsection{Manipulations}
Now, 
we apply MCM to construct a series of negatively twisted symmetric differential forms. 
For shortness, we will refer  to~\cite[Section~7]{Xie-2015-arxiv} for skipped details, in which the canonical setting
$\{\mathbb{P}_{\mathbb{K}}^N,
\mathcal{O}_{\mathbb{P}_{\mathbb{K}}^N}(1), (z_0,\dots,z_N)\}$ there plays the same role as that of
 $\{\mathbf{P},\mathcal{L}, (\zeta_0,\dots,\zeta_N)\}$ in our treatment here. 

\smallskip
To begin with, we rewrite each section $F_i$ in~\thetag{\ref{F_i-moving-coefficient-method-full-strenghth}} as (cf.~\cite[p.~43, (104)]{Xie-2015-arxiv}):
{\footnotesize
\begin{equation}
\label{F_i = the sum of 2N+2 sections}
F_i\,
=\,
\sum_{j=0}^N\,
\underbrace{
\Big(
A_i^j\,\zeta_j^{d}
+
\sum_{l=c+r+1}^{N-1}\,
\sum_{
\substack
{0\leqslant j_0<\cdots<j_l\leqslant N
\\
j_k=j\,\text{for some}\,0\leqslant k \leqslant l
}}\,
M_i^{j_0,\dots,j_l;j_k}\,
\zeta_{j_0}^{\mu_{l,k}}
\cdots
\widehat{\zeta_{j_k}^{\mu_{l,k}}}
\cdots
\zeta_{j_l}^{\mu_{l,k}}
\zeta_{j_k}^{d-l\mu_{l,k}}
\Big)
}_{
\text{for each }j\,=\,0\,\cdots\, N,
\text{ we view this whole bracket as one section}
}\,
+\,
\sum_{k=0}^N\,
\underbrace{
M_i^{0,\dots,N;k}\,
\zeta_{0}^{\mu_{N,k}}
\cdots
\widehat{\zeta_{k}^{\mu_{N,k}}}
\cdots
\zeta_{N}^{\mu_{N,k}}
\zeta_{k}^{d-N\,\mu_{N,k}}
}_{
\text{for each }k\,=\,0\,\cdots\, N,
\text{ we view it as one section}
}.
\end{equation}
}\!\!
Thus, we view each $F_i$ as 
the sum of $2N+2=\sum_{j=0}^N\,1+\sum_{k=0}^N\,1$ sections of the same line bundle,
as indicate above.

Next, we
construct a $(c+r+c)\times (2N+2)$ formal matrix $\mathsf{M}$ such that, for every $i=1\cdots c+r$, $j=1\cdots c$, its $i$-th row copies
the $2N+2$ sections in the sum of $F_i$
in the exact order, and its $(c+r+j)$-th row is the formal differential of the $j$-th row. 

Write the $2N+2$ columns of $\sf M$ as:
\begin{equation}
\label{M=(...)}
{\sf M}
=
\Big(
{\sf A}_0
\mid
\cdots
\mid
{\sf A}_N
\mid
{\sf B}_0
\mid
\cdots
\mid
{\sf B}_N
\Big).
\end{equation}
For every $\nu=0\cdots N$, we construct the matrix:
\begin{equation}
\label{K^nu=(...)}
{\sf K}^{\nu}\,
:=\,
\Big(
\mathsf{A}_0
\mid
\cdots
\mid
\widehat{\mathsf{A}_{\nu}}
\mid
\cdots
\mid
\mathsf{A}_N
\mid
\mathsf{A}_{\nu}+\sum_{j=0}^{N}\mathsf{B}_{j}
\Big),
\end{equation}
where the last column is understood to appear in the
`omitted' column.
Also, for every $\tau=0\cdots N-1$ and every $\rho=\tau+1\cdots N$, we construct the matrix: 
\begin{equation}
\label{K^(tau,rho)=(...)}
{\sf K}^{\tau,\,\rho}\,
:=\,
\Big(
\mathsf{A}_0+\mathsf{B}_{0}
\mid
\cdots
\mid
\mathsf{A}_{\tau}+\mathsf{B}_{\tau}
\mid
\mathsf{A}_{\tau+1}
\mid
\cdots
\mid
\widehat{\mathsf{A}_{\rho}}
\mid
\cdots
\mid
\mathsf{A}_{N}
\mid
\mathsf{A}_{\rho}+\sum_{j=\tau+1}^N\mathsf{B}_{j}
\Big).
\end{equation}

Now, fix a positive integer $\varheartsuit\geqslant 1$
such that:
\begin{equation}
\label{heart should be big enough}
\mathcal{A}_{i}
\otimes 
\mathcal{L}^{-\varheartsuit}\,
<\,
0
\qquad
{\scriptstyle(i\,=\,1\,\cdots\, c+r)}.
\end{equation}
Recalling the rough Algorithm~\thetag{\ref{rough algorithm}},
observe in~\thetag{\ref{F_i = the sum of 2N+2 sections}}, \thetag{\ref{K^nu=(...)}}
that the $N+1$ columns of ${\sf K}^{\nu}$
are subsequently divisible by:
\[
\zeta_0^{d-\delta_N},
\dots, 
\widehat{\zeta_\nu^{d-\delta_N}},
\dots,
\zeta_N^{d-\delta_N},
\zeta_\nu^{\mu_{N,0}},
\] 
where $\delta_N:=(N-1)\,\mu_{N-1,N-1}$.
Thus, 
applying Proposition~\ref{the symmetric differential forms with many very ample line bundle}, 
for every $1 \leqslant j_1 <
\dots < j_n \leqslant c$,
we obtain a global symmetric differential form:
\[
\phi_{j_1,\dots,j_n}^{\nu}
\
\in\
\mathsf{H}^0
\big(
X,\mathsf{Sym}^{n}\,\Omega_V
\otimes
\underbrace{
\mathcal{A}_{j_1,\dots,j_n}^{1,\dots,c+r}
\otimes 
\mathcal{L}^{\heartsuit_{j_1,\dots,j_n}^{\nu}}
}_{
<\,0, \text{ because of}~\thetag{\ref{heart should be big enough}},
\thetag{\ref{value of heart, nu}}
}
\big)
\qquad
{\scriptstyle (\nu\,=\,0\,\cdots\, N)},
\]
with negative twist: 
{\footnotesize
\begin{equation}
\label{value of heart, nu}
\aligned
\heartsuit_{j_1,\dots,j_n}^{\nu}\,
&
=\,
\sum_{p=1}^{c+r}\,(d+\epsilon_{p})
+
\sum_{q=1}^{n}\,(d+\epsilon_{j_q})
-
\sum_{j=0,
j\neq \nu}^{N}\,(d-\delta_N-1)
-
(\mu_{N,0}-1)
\\
&
=\,
-\,\mu_{N,0}
+
N\,\delta_N
+
\sum_{p=1}^{c+r}\,
\epsilon_p
+
\sum_{q=1}^n\,
\epsilon_{j_q}
+
N
+
1
\\
\explain{by Algorithm~\thetag{\ref{rough algorithm}}}
\qquad
&
\leqslant\,
-\,N\,\varheartsuit.
\endaligned
\end{equation}
}\!\!
Similarly, observe that
the $N+1$ columns of ${\sf K}^{\tau,\,\rho}$
are subsequently divisible by:
\[
\zeta_0^{d-N\,\mu_{N,0}},
\dots, 
\zeta_{\tau}^{d-N\,\mu_{N,\tau}},
\zeta_{\tau+1}^{d-\delta_{N}},
\dots,
\widehat{\zeta_\nu^{d-\delta_N}},
\dots,
\zeta_N^{d-\delta_N},
\zeta_\nu^{\mu_{N,\tau+1}},
\]
thus by Proposition~\ref{the symmetric differential forms with many very ample line bundle} we obtain:
\[
\psi_{j_1,\dots,j_n}^{\tau,\,\rho}\
\in\
\mathsf{H}^0
\big(
X,\mathsf{Sym}^{n}\,\Omega_V
\otimes
\underbrace{
\mathcal{A}_{j_1,\dots,j_n}^{1,\dots,c+r}
\otimes 
\mathcal{L}^{\heartsuit_{j_1,\dots,j_n}^{\tau,\,\rho}}
}_{
<\,0, \text{ because of}~\thetag{\ref{heart should be big enough}},
\thetag{\ref{value of heart, tau-rho}}
}
\big)
\qquad
{\scriptstyle (\tau\,=\,0\,\cdots\, N-1,\,\,
\rho\,=\,\tau+1\,\cdots\, N)},
\]
with negative twist: 
{\footnotesize
\begin{equation}
\label{value of heart, tau-rho}
\aligned
\heartsuit_{j_1,\dots,j_n}^{\tau,\,\rho}\,
&
=\,
\sum_{p=1}^{c+r}\,(d+\epsilon_{p})
+
\sum_{q=1}^{n}\,(d+\epsilon_{j_q})
-
\sum_{k=0}^{\tau}\,(d-N\,\mu_{N,k}-1)
-
\sum_{j=\tau+1,j\neq \rho}^{N}\,(d-\delta_N-1)
-
(\mu_{N,\tau+1}-1)
\\
&
=\,
-\,\mu_{N,\tau+1}
+
\sum_{k=0}^{\tau}N\,\mu_{N,k}
+
(N-\tau-1)\,\delta_N
+
\sum_{p=1}^{c+r}\,
\epsilon_p
+
\sum_{q=1}^n\,
\epsilon_{j_q}
+
N
+
1
\\
\explain{by Algorithm~\thetag{\ref{rough algorithm}}}
\qquad
&
\leqslant\,
-\,N\,\varheartsuit.
\endaligned
\end{equation}
}

Recalling the notation in~Section~\ref{section: Hidden Symmetric Differential Forms},
for any 
$\eta=1\cdots n-1$, for any `vanishing' indices
$
0\leqslant v_1<\dots<v_{\eta}\leqslant N
$,
by
applying Proposition~\ref{the hidden general symmetric differential forms},
we can construct a series of negatively twisted symmetric differential forms 
over the `coordinates vanishing part':
\[
\omega_{\ell}\
\in
\Gamma
\big(
{}_{v_1,\dots,v_{\eta}}X,\mathsf{Sym}^{n-\eta}\,\Omega_V
\otimes
\text{negative twist}
\big)
\qquad
{\scriptstyle
(\,\ell\,=\,1,\,2,\,\dots).
}
\]
The procedure is much the same as before.
First, we rewrite each section $F_i$ in~\thetag{\ref{F_i-moving-coefficient-method-full-strenghth}} as:
{\footnotesize
\[
F_i
=
\sum_{j=0}^{N-\eta}\,
A_i^{r_j}\,
\zeta_{r_j}^{d}
+
\sum_{l=c+r+1}^{N-\eta}\,
\sum_{0\leqslant j_0<\cdots<j_l\leqslant N-\eta}\,
\sum_{k=0}^l\,
M_i^{r_{j_0},\dots,r_{j_l};r_{j_k}}\,
\zeta_{r_{j_0}}^{\mu_{l,k}}
\cdots
\widehat{\zeta_{r_{j_k}}^{\mu_{l,k}}}
\cdots
\zeta_{r_{j_l}}^{\mu_{l,k}}
\zeta_{r_{j_k}}^{d-l\mu_{l,k}}
+ 
\mathsf{negligible\,\,terms},
\]
}\!\!
so that $F_i$ has the same structure as~\thetag{\ref{F_i-moving-coefficient-method-full-strenghth}}, in the sense of  replacing:
\[
N \leftrightarrow N-\eta,
\qquad
\{0,\dots,N\}\
\longleftrightarrow\
\{
r_0,
\dots,
r_{N-\eta}
\}.
\] 
Thus, we can repeat the above
 manipulations. For shortness, we 
skip all details (cf.~\cite[Subsection 7.3]{Xie-2015-arxiv}) and only
state the results. 

For every 
$
1 
\leqslant 
j_1 
< 
\cdots 
< 
j_{n-\eta}
\leqslant 
c
$,
for every $\nu=0\cdots N-\eta$,
we obtain a symmetric differential form:
\[
{}_{v_1,\dots,v_{\eta}}\phi_{j_1,\dots,j_{n-\eta}}^{\nu}
\in
\Gamma
\big(
{}_{v_1,\dots,v_{\eta}}X,\mathsf{Sym}^{n-\eta}\,\Omega_V
\otimes
\underbrace{
\mathcal{A}_{j_1,\dots,j_{n-\eta}}^{1,\dots,c+r}
\otimes 
\mathcal{L}^{{}_{v_1,\dots,v_{\eta}}\heartsuit_{j_1,\dots,j_{n-\eta}}^{\nu}}
}_{
<\,0, \text{ because of }
\thetag{\ref{value of heart, nu, 2}}
}
\big),
\]
with negative twist (set $
\delta_{N-\eta}
:=
(N-\eta-1)\,\mu_{N-\eta-1,N-\eta-1}
$):  
\begin{equation}
\label{value of heart, nu, 2}
{}_{v_1,\dots,v_{\eta}}\heartsuit_{j_1,\dots,j_{n-\eta}}^{\nu}\,
=\,
-\,\mu_{N-\eta,0}
+
(N-\eta)\,\delta_{N-\eta}
+
\sum_{i=1}^{c+r}\,
\epsilon_i
+
\sum_{\ell=1}^{n-\eta}\,
\epsilon_{j_\ell}
+
(N-\eta)
+
1\
\leqslant\
-\,(N-\eta)\,\varheartsuit.
\end{equation}
Also, for every $\tau=0\cdots N-\eta-1$ and every $\rho=\tau+1\cdots N-\eta$, we obtain:
\[
{}_{v_1,\dots,v_{\eta}}\phi_{j_1,\dots,j_{n-\eta}}^{\tau,\,\rho}
\in
\Gamma
\big(
{}_{v_1,\dots,v_{\eta}}X,\mathsf{Sym}^{n-\eta}\,\Omega_V
\otimes
\underbrace{
\mathcal{A}_{j_1,\dots,j_{n-\eta}}^{1,\dots,c+r}
\otimes 
\mathcal{L}^{{}_{v_1,\dots,v_{\eta}}\heartsuit_{j_1,\dots,j_{n-\eta}}^{\tau,\,\rho}}
}_{
<\,0, \text{ because of }
\thetag{\ref{negative twisted degree, tau, rho, vanishing coordinates}}
}
\big),
\]
with negative twist:  
{\footnotesize
\begin{equation}
\label{negative twisted degree, tau, rho, vanishing coordinates}
{}_{v_1,\dots,v_{\eta}}\heartsuit_{j_1,\dots,j_{n-\eta}}^{\tau,\,\rho}\,
=\,
-\,\mu_{N-\eta,\tau+1}
+
\sum_{k=0}^{\tau}(N-\eta)\,\mu_{N-\eta,k}
+
(N-\eta-\tau-1)\,\delta_{N-\eta}
+
\sum_{i=1}^{c+r}\,
\epsilon_i
+
\sum_{\ell=1}^{n-\eta}\,
\epsilon_{j_\ell}
+
(N-\eta)
+
1\
\leqslant\
-\,(N-\eta)\,\varheartsuit.
\end{equation}
}

\subsection{A Natural Algorithm}
\label{Subsection: Delayed Algorithm}
We will construct $\mu^{l,k}$ in a lexicographic order with respect to indices $(l,k)$, for $l=c+r+1 \cdots N, k=0\cdots l$, together with positive integers $\delta_l$.

For simplicity, we start by setting:
\begin{equation}
\label{delta_(c+r+1)}
\delta_{c+r+1}\,
\geqslant\,
\max\,
\{
\epsilon_1,
\dots,
\epsilon_{c+r}
\}.
\end{equation} 
For every $l=c+r+1 \cdots N$, in this step, we begin with choosing $\mu_{l,0}$ that satisfies:
\begin{equation}
\label{mu_0>?}
\explain{see~\thetag{\ref{value of heart, nu, 2}},
\thetag{\ref{value of heart, nu}}}
\qquad
\mu_{l,0}
\geqslant
l\,
\delta_l
+
l\,\delta_{c+r+1}
+
l
+
1
+
l\,\varheartsuit,
\end{equation}
then  inductively we choose $\mu_{l,k}$ satisfying:
\begin{equation}
\label{mu_k>?}
\explain{see~\thetag{\ref{negative twisted degree, tau, rho, vanishing coordinates}},
\thetag{\ref{value of heart, tau-rho}}}
\qquad
\mu_{l,k}\,
\geqslant\,
\sum_{j=0}^{k-1}\,
l\,\mu_{l,j}
+
(l-k)\,\delta_l
+
l\,\delta_{c+r+1}
+
l
+
1
+
l\,\varheartsuit
\qquad
{\scriptstyle{(k\,=\,1\,\cdots\,l)}}.
\end{equation}
If $l<N$, we end this step by setting:
\begin{equation}
\label{delta_l=?}
\delta_{l+1}
:=
l\,\mu_{l,l}
\end{equation}
as the starting point for the next step $l+1$. At the end $l=N$, we require that:
\[
d\,
\geqslant\,
(N+1)\,\mu_{N,N}
\]
be large enough.

\subsection{Controlling the base loci}
\label{subsection: Controlling the base loci}
We will provide some technical preparations in Section~\ref{section: Some Technical Details}.

By
adapting the arguments in~\cite[Section 9]{Xie-2015-arxiv},
we can show that, for generic choices
of parameters $A_{\bullet}^{\bullet}, M_{\bullet}^{\bullet;\bullet}$,
firstly, the:
\begin{equation}
\label{base locus of global symmetric forms}
\text{Base Locus of }
\Big\{
\phi_{j_1,\dots,j_n}^{\nu},
\psi_{j_1,\dots,j_n}^{\tau,\,\rho}
\Big\}
^{
\nu,\,
\tau,\,
\rho
}_{
1
\leqslant 
j_1 
<
\cdots 
<
j_n 
\leqslant 
c
}\,
=:\,
\mathsf{BS}
\end{equation}
is discrete\big/empty over the `coordinates nonvanishing part'
$
\{
\zeta_0\cdots \zeta_N
\neq
0
\}
$,
and secondly, 
for every $1\leqslant \eta \leqslant n-1$, for every 
$
0\leqslant v_1<\dots<v_{\eta}\leqslant N
$, the:
\begin{equation}
\label{base loci over coordinates vanishing part}
\text{Base Locus of }
\Big\{
{}_{v_1,\dots,v_{\eta}}\phi_{j_1,\dots,j_{n-\eta}}^{\nu},\,
{}_{v_1,\dots,v_{\eta}}\psi_{j_1,\dots,j_{n-\eta}}^{\tau,\,\rho}
\Big\}
_{
1
\leqslant 
j_1 
<
\cdots 
<
j_{n-\eta} 
\leqslant 
c
}
^{
\nu,\,
\tau,\,
\rho
}\,
=:\,
{}_{v_1,\dots,v_{\eta}}\mathsf{BS}
\end{equation}
is discrete\big/empty over the corresponding `coordinates nonvanishing part' 
$
\{
\zeta_{r_0}\cdots \zeta_{r_{N-\eta}}
\neq
0
\}
$.

For the sake of completeness, we sketch the 
proof in Subsection~\ref{subsection: Emptiness of base loci} below.

\subsection{Effective degree estimates}
\label{subsection: Effective degree estimates}
In the Algorithm above, we first set $\varheartsuit=2$, $\epsilon_1=\cdots=\epsilon_{c+r}=1$,
and next we demand all inequalities~\thetag{\ref{delta_(c+r+1)}} --
\thetag{\ref{delta_l=?}}  to be exactly equalities.
Thus we receive the estimate
(cf.~\cite[Section 11]{Xie-2015-arxiv}):
\[
(N+1)\,\mu_{N,N}
<\,
N^{N^2/2}-1:=\texttt{d}_0\,
\qquad
{\scriptstyle(\forall\,N\,\geqslant\,3)}.
\] 

Now, recall the value $\epsilon_0=3/\texttt{d}_0$ in Definition~\ref{almost parallel}.
In fact, the motivation is the following

\begin{Proposition}
\label{explain the definition of proportional}
Let $\mathcal{L}$, $\mathcal{S}$ be two ample line bundles on $\mathbf{P}$.
Then $\mathcal{S}$ is almost proportional to $\mathcal{L}$
if and only if there exist some positive integers $d\geqslant \texttt{d}_0$, $s, l\geqslant 1$, such that
$\mathcal{S}^{s}=\mathcal{A}\otimes\mathcal{L}^{l}\otimes \mathcal{L}^{l\,d}$,
where the line bundle $\mathcal{A}$ satisfies that
$\mathcal{A}\otimes\mathcal{L}^l$ is very ample and that
$\mathcal{A}\otimes\mathcal{L}^{-2\,l}<0$ is negative.
\end{Proposition}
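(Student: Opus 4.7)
The plan is to establish both implications of the equivalence separately, using throughout the identity $\epsilon_0 \, \texttt{d}_0 = 3$ forced by the definition $\epsilon_0 := 3 / \texttt{d}_0$. For the \emph{if} direction, assuming the factorization $\mathcal{S}^{s} = \mathcal{A} \otimes \mathcal{L}^{l} \otimes \mathcal{L}^{ld}$ with the stated positivity properties, I would set $\alpha := s \, [\mathcal{S}]$ and $\beta := l d \, [\mathcal{L}]$; then $\alpha - \beta = [\mathcal{A} \otimes \mathcal{L}^{l}]$ is very ample and hence ample, while the second required bound decomposes as
\[
(1+\epsilon_0)\,\beta - \alpha \, = \, \epsilon_0 \, l d \, [\mathcal{L}] - [\mathcal{A} \otimes \mathcal{L}^{l}] \, = \, \big(\epsilon_0 \, l d - 3 l\big) \, [\mathcal{L}] \, + \, \big(3 l \, [\mathcal{L}] - [\mathcal{A} \otimes \mathcal{L}^{l}]\big).
\]
The first summand is a non-negative multiple of the ample class $[\mathcal{L}]$ since $d \geqslant \texttt{d}_0$, and the second is ample precisely because $\mathcal{A} \otimes \mathcal{L}^{-2l} = (\mathcal{A} \otimes \mathcal{L}^{l}) \otimes \mathcal{L}^{-3l}$ is assumed negative; the sum is therefore ample, yielding almost proportionality for the pair $(\alpha, \beta)$.

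For the \emph{only if} direction, the input is a pair of positive reals $\lambda, \mu$ with $\lambda \, [\mathcal{S}] - \mu \, [\mathcal{L}]$ and $(1+\epsilon_0)\, \mu \, [\mathcal{L}] - \lambda \, [\mathcal{S}]$ both ample. The locus of such $(\lambda, \mu) \in \mathbb{R}_{+}^{2}$ is open in $\mathbb{R}_{+}^{2}$ and, by hypothesis, non-empty, so it contains a pair of positive rationals, and after clearing denominators I obtain positive integers $a, b$ for which both $a \, [\mathcal{S}] - b \, [\mathcal{L}]$ and $(1+\epsilon_0) \, b \, [\mathcal{L}] - a \, [\mathcal{S}]$ are ample. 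I would then fix $d := \texttt{d}_0$, choose a large positive integer $M$, and define $s := \texttt{d}_0 \, M \, a$, $l := M \, b$, and $\mathcal{A} := \mathcal{S}^{s} \otimes \mathcal{L}^{-l(d+1)}$. The very ampleness of $\mathcal{A} \otimes \mathcal{L}^{l} = \mathcal{S}^{s} \otimes \mathcal{L}^{-ld}$ holds because this line bundle has class $\texttt{d}_0 \, M \, (a \, [\mathcal{S}] - b \, [\mathcal{L}])$, a positive multiple of an ample class, hence very ample once $M$ is large enough, by Serre's theorem. The negativity of $\mathcal{A} \otimes \mathcal{L}^{-2l} = \mathcal{S}^{s} \otimes \mathcal{L}^{-l(d+3)}$ follows from the computation
\[
M \, \big( b \, (\texttt{d}_0 + 3) \, [\mathcal{L}] \, - \, \texttt{d}_0 \, a \, [\mathcal{S}] \big) \, = \, \texttt{d}_0 \, M \, \big( (1+\epsilon_0) \, b \, [\mathcal{L}] \, - \, a \, [\mathcal{S}]\big),
\]
showing that the dual class is a positive multiple of an ample class; this step relies crucially on $d = \texttt{d}_0$ to saturate the $\epsilon_0$-bound.

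The main obstacle I expect is precisely this last scaling: a naive attempt to leave $d$ free above $\texttt{d}_0$ produces two sufficient ample inequalities whose admissible range for $d$ shrinks to empty the moment $d \geqslant \texttt{d}_0$, so $d$ carries essentially no slack. The resolution is to pin $d = \texttt{d}_0$ from the outset---so that $(1+\epsilon_0) \, l d = l(d+3)$ becomes an exact equality rather than a strict inequality---and to pass from the rational approximation $(a, b)$ to $(s, l) = (\texttt{d}_0 \, M \, a, \, M \, b)$ with $M \gg 1$. This single joint inflation promotes the ample class $a \, [\mathcal{S}] - b \, [\mathcal{L}]$ to a very ample line bundle while preserving both inequalities without any margin loss.
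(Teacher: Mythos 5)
Your proof is correct and follows essentially the same route as the paper: the same choice $\alpha=s\,[\mathcal{S}]$, $\beta=l\,d\,[\mathcal{L}]$ with the identity $(1+\epsilon_0)\beta-\alpha=(\epsilon_0\,l\,d-3l)\,[\mathcal{L}]-[\mathcal{A}\otimes\mathcal{L}^{-2l}]$ in one direction, and in the other the same rational approximation inside the open ample cone, pinning $d=\texttt{d}_0$ and passing to a large multiple to upgrade ampleness to very ampleness. The only differences are bookkeeping (you absorb the factor $\texttt{d}_0$ into $s$ and use one multiplier $M$, where the paper first forces $m\beta=l_0\,\texttt{d}_0\,[\mathcal{L}]$ by divisibility and then takes a further power $m'$), so nothing essential is changed.
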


\begin{proof}
``$\Longleftarrow$'' We can take
$\alpha=s\cdot [\mathcal{S}]$ and $\beta=l\,d\cdot [\mathcal{L}]$, so that $\alpha-\beta=[\mathcal{A}\otimes\mathcal{L}^l]>0$,
and that $(1+\epsilon_0)\,\beta-\alpha\geqslant  
(1+3/d)\,\beta-\alpha=-\,[\mathcal{A}
\otimes\mathcal{L}^{-2\,l}]>0$.

``$\Longrightarrow$'' Since $\mathbb{Q}_+$ is dense in
$\mathbb{R}_+$, we may assume that $\alpha\in \mathbb{Q}_{+}\cdot [\mathcal{S}]$
and $\beta\in \mathbb{Q}_{+}\cdot [\mathcal{L}]$.
Next, we can choose a sufficiently divisible integer $m>0$ such that
$m\cdot\alpha=s_0\cdot [\mathcal{S}]$ and $m\cdot\beta=l_0\,\texttt{d}_0\cdot [\mathcal{L}]$ for some positive integers $s_0, l_0>0$.
Set the line bundle $\mathcal{A}_0:=\mathcal{S}^{s_0}\otimes
(\mathcal{L}^{l_0}\otimes \mathcal{L}^{l_0\,\texttt{d}_0})^{-1}$, hence
$\mathcal{S}^{s_0}=\mathcal{A}_0\otimes\mathcal{L}^{l_0}\otimes \mathcal{L}^{l_0\,\texttt{d}_0}$.
Now, using
$\beta<\alpha<(1+\epsilon)\,\beta$, we receive:
{\footnotesize
\[
0
<
m
\cdot
(\alpha
-
\beta)
=
m\cdot\alpha-
m\cdot\beta
=
s_0\cdot [\mathcal{S}]
-
l_0\,\texttt{d}_0\cdot [\mathcal{L}]
=
[\mathcal{S}^{s_0}\otimes \mathcal{L}^{-l_0\,\texttt{d}_0}]
=
[\mathcal{A}_0\otimes\mathcal{L}^{l_0}],
\]
\[
0
>
m
\cdot 
\big[
\alpha
-
(1+\epsilon_0)\,\beta
\big]
=
m
\cdot 
\alpha
-
(1+3/\texttt{d}_0)\,
m
\cdot
\beta
=
s_0\cdot [\mathcal{S}]
-
(1+3/\texttt{d}_0)\,
l_0\,\texttt{d}_0\cdot [\mathcal{L}]
=
[\mathcal{A}_0\otimes\mathcal{L}^{-2\,l_0}].
\]
}\!\!
The first line above implies that
$(\mathcal{A}_0\otimes\mathcal{L}^{l_0})^{\otimes\,m'}$
is very ample for some positive integer $m'>0$.
Thus we can set $s:=s_0\,m'$, $l:=l_0\,m'$, $\mathcal{A}:=\mathcal{A}_0^{m'}$, then 
$\mathcal{S}^{s}=\mathcal{A}\otimes\mathcal{L}^{l}\otimes \mathcal{L}^{l\,\texttt{d}_0}$ satisfies that
$\mathcal{A}\otimes\mathcal{L}^l$ is very ample and that
$\mathcal{A}\otimes\mathcal{L}^{-2\,l}<0$ is negative.
\end{proof}

\begin{Remark}
\label{we can take d=d_0}
In the above proof, we see that the second assertion holds for
$d=\texttt{d}_0$.
In fact, it holds for any positive integer $d'\leqslant d$, since we have:
\[
\mathcal{L}^{s\,(1+d')}\,
=\,
\big(
\mathcal{A}
\otimes 
\mathcal{L}^{l\,(1+d)}
\big)^{1+d'}\,
=\,
\mathcal{A}^{1+d'}
\otimes
\mathcal{L}^{l\,(1+d)}
\otimes 
\big(
\mathcal{L}^{l\,(1+d)}
\big)^{d'},
\]
where: 
\[
\mathcal{A}^{1+d'}
\otimes
\mathcal{L}^{l\,(1+d)}\,
=\,
\big(
\underbrace{
\mathcal{A}
\otimes
\mathcal{L}^{l}
}_{
\text{very ample}
}
\big)^{1+d'}
\otimes
\mathcal{L}^{l\,(d-d')}
\] 
is very ample and where:
\[
\mathcal{A}^{1+d'}
\otimes
\mathcal{L}^{-2\,l\,(1+d)}\,
=\,
\big(
\underbrace{
\mathcal{A}\otimes\mathcal{L}^{-2\,l}
}_{
\text{negative}
}
\big)^{1+d'}
\otimes
\mathcal{L}^{-2\,l\,(d-d')}\
<\
0.
\]
Thus the second assertion holds not only for
$(d,s,l)$ but also for
$\big(d',s\,(1+d'),l\,(1+d)\big)$.
\end{Remark}

\subsection{Proof of Theorem~\ref{Main Theorem, 2nd}}
\label{subsection: proof of the main theorem}
Summarizing the above Subsections~\ref{subsection: Adaptation} -- \ref{subsection: Effective degree estimates}, we can obtain 

\begin{Theorem}
\label{general theorem of almost the same degree case}
Let $\mathbf{P}$ be a smooth projective
variety of dimension $N$,
and let
$\mathcal{L}$ be a very ample line bundle
over $\mathbf{P}$. For any integers
$c, r\geqslant 0$ with $2c+r\geqslant N$,
for any integer $d\geqslant \texttt{d}_0$, 
for any $c+r$ line bundles $\mathcal{A}_i$ $(i=1\cdots c+r)$ such
that
$
\mathcal{A}_i
\otimes
\mathcal{L}$
are very ample and that
$
\mathcal{A}_i
\otimes
\mathcal{L}^{-\,2}
<
0
$,
setting:
\[
\mathcal{L}_i\,
=\,
\mathcal{A}_i
\otimes 
\mathcal{L}
\otimes
\mathcal{L}^{d}
\qquad
{\scriptstyle (i\,=\,1\,\cdots\, c+r)},
\]
then, 
for generic $c+r$
 hypersurfaces: 
\[
H_1
\in
\big|\mathcal{L}_1\big|,
\dots,
H_{c+r}\in \big|\mathcal{L}_{c+r}\big|,
\]
the cotangent bundle 
$\Omega_V$ of the intersection of the first $c$ hypersurfaces
$
V
=
H_1
\cap
\cdots
\cap 
H_c
$
restricted 
to the intersection of all the $c+r$ hypersurfaces
$
X
=
H_1
\cap
\cdots
\cap 
H_c
\cap
H_{c+1}
\cap
\cdots
\cap 
H_{c+r}
$
is ample.
\end{Theorem}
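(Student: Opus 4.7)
The plan is to follow the general strategy of Section~\ref{section: General Strategy}: exhibit one specific member of the family $|\mathcal{L}_1|\times\cdots\times|\mathcal{L}_{c+r}|$ for which the restricted cotangent bundle $\Omega_V|_X$ is ample, and then invoke Grothendieck's openness of ampleness in flat families to propagate this property to a Zariski generic tuple $(H_1,\dots,H_{c+r})$.

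To construct such a distinguished member, I would specialise the parameters of Subsection~\ref{subsection: Adaptation} by setting $\epsilon_i=1$ for every $i=1\cdots c+r$ and $\varheartsuit=2$. Under the hypotheses of the theorem, the line bundle $\mathcal{A}_i\otimes\mathcal{L}^{\epsilon_i}=\mathcal{A}_i\otimes\mathcal{L}$ is very ample (so that generic moving coefficients make sense and cut out smooth schemes), the Fermat-type section $F_i$ defined in~\thetag{\ref{F_i-moving-coefficient-method-full-strenghth}} lies in $|\mathcal{A}_i\otimes\mathcal{L}\otimes\mathcal{L}^d|=|\mathcal{L}_i|$, and the negativity assumption $\mathcal{A}_i\otimes\mathcal{L}^{-2}<0$ coincides with~\thetag{\ref{heart should be big enough}} for $\varheartsuit=2$. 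Running the Algorithm of Subsection~\ref{Subsection: Delayed Algorithm} with equalities wherever admissible yields, as recalled in Subsection~\ref{subsection: Effective degree estimates}, the effective bound $(N+1)\,\mu_{N,N}<\texttt{d}_0$, so the final demand $d\geqslant(N+1)\,\mu_{N,N}$ is automatic from the hypothesis $d\geqslant\texttt{d}_0$.

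Having pinned all numerical parameters, the next step is to verify via a Bertini-type argument (detailed in Subsection~\ref{subsection: Bertini-type assertions}) that, for a Zariski generic choice of the moving coefficients $A_i^{\bullet}$, $M_i^{\bullet;\bullet}\in\mathsf{H}^0(\mathbf{P},\mathcal{A}_i\otimes\mathcal{L})$, both $V$ and $X$ are smooth complete intersections. Propositions~\ref{the symmetric differential forms with many very ample line bundle} and~\ref{the hidden general symmetric differential forms} then manufacture the two families of symmetric differential forms $\phi^{\nu}_{j_1,\dots,j_n}$, $\psi^{\tau,\rho}_{j_1,\dots,j_n}$ on $X$ together with their hidden analogues ${}_{v_1,\dots,v_\eta}\phi^{\nu}_{j_1,\dots,j_{n-\eta}}$, ${}_{v_1,\dots,v_\eta}\phi^{\tau,\rho}_{j_1,\dots,j_{n-\eta}}$ on the coordinate-vanishing strata ${}_{v_1,\dots,v_\eta}X$, whose twisted degrees are strictly negative, of magnitude at least $N\varheartsuit$ and $(N-\eta)\varheartsuit$ respectively, by the estimates~\thetag{\ref{value of heart, nu}}, \thetag{\ref{value of heart, tau-rho}}, \thetag{\ref{value of heart, nu, 2}}, \thetag{\ref{negative twisted degree, tau, rho, vanishing coordinates}} guaranteed by the Algorithm.

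The main obstacle, and the technical heart of the argument, is to prove that the base loci~\thetag{\ref{base locus of global symmetric forms}} and~\thetag{\ref{base loci over coordinates vanishing part}} are discrete on their respective coordinate-nonvanishing open strata, for a Zariski generic choice of moving coefficients. Following the template of~\cite[Section~9]{Xie-2015-arxiv}, this reduces to a stratified incidence/dimension count: on each stratum one exhibits, within the constructed family of forms, a subcollection whose common zero locus in $\mathbb{P}(\Omega_V|_X)$ projects with finite fibres, so that the corresponding bad locus in the parameter space has strictly smaller dimension than the parameter space itself. Granting this genericity, every stratum contributes only finitely many points, so the total base locus of the constructed negatively twisted forms is finite on $\mathbb{P}(\Omega_V|_X)$; together with the uniform negativity of the twists, the standard characterisation of ampleness via the tautological line bundle $\mathcal{O}_{\mathbb{P}(\Omega_V|_X)}(1)$ then forces $\Omega_V|_X$ to be ample for this distinguished parameter. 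A final application of Grothendieck's openness of ampleness in the total family $|\mathcal{L}_1|\times\cdots\times|\mathcal{L}_{c+r}|$ extends the conclusion to a Zariski generic tuple $(H_1,\dots,H_{c+r})$, as claimed.
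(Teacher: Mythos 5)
Your outline follows the paper's own route (specialise the MCM sections~(\ref{F_i-moving-coefficient-method-full-strenghth}) with $\epsilon_i=1$ and $\varheartsuit=2$, so that $\mathcal{A}_i\otimes\mathcal{L}^{-2}<0$ is exactly~(\ref{heart should be big enough}); run the Algorithm so that $d\geqslant\texttt{d}_0$ suffices; Bertini; MCM base-locus control; one ample member plus Grothendieck openness), but the concluding step has a genuine gap. The assertion that the total base locus of the constructed forms is \emph{finite} in $\mathbb{P}(\Omega_V\vert_X)$ is not correct: the hidden forms of Proposition~\ref{the hidden general symmetric differential forms} exist only for $\eta\leqslant n-1$, are sections only over ${}_{v_1,\dots,v_\eta}X$, and their base loci are controlled only over the coordinate-nonvanishing part of each stratum. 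Over the points of $X$ where at least $n=N-c-r$ of the sections $\zeta_0,\dots,\zeta_N$ vanish, none of the constructed forms gives any information, and the fibres $\mathbb{P}(\Omega_V\vert_z)\cong\mathbb{P}^{N-c-1}$ over such points are positive-dimensional as soon as $n+r\geqslant 2$. So ``every stratum contributes finitely many points'' does not produce a finite base locus, and curves contained in these fibres are not reached by your argument at all.

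The paper closes precisely this hole with two ingredients your sketch omits. First, its Bertini step (Step~2 of the proof, Subsection~\ref{subsection: Bertini-type assertions}) requires smoothness not only of $V$ and $X$ but of \emph{all} strata ${}_{v_1,\dots,v_\eta}X$ for $1\leqslant\eta\leqslant n$; in particular, for $\eta=n$ these are smooth of dimension $0$, hence finite, so any irreducible curve $C\subset{}_{F_{c+1},\dots,F_{c+r}}\mathbb{P}_{F_1,\dots,F_c}$ lying in $n$ coordinate hyperplanes is contracted by $\pi$ and $\mathcal{O}_{\mathbb{P}(\Omega_{\mathbf{P}})}(1)\vert_C$ is ample for free; your version of Bertini (only $V$ and $X$ smooth) leaves exactly this case open. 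Second, ``uniformly negative twists plus small base locus implies ampleness'' is not a quotable fact in the form you invoke it; the actual deduction is the nef-plus-ample bookkeeping of Step~4: choose an ample $\mathbb{Q}$-divisor $\mathcal{S}$ with ${}_{v_1,\dots,v_\eta}\mathcal{L}_{\text{\ding{100}}}/(n-\eta)+\mathcal{S}<0$ for all the twists in~(\ref{all great sections}) (this is where the margins $-(N-\eta)\varheartsuit$ are used), check curve by curve that $\mathcal{N}=\mathcal{O}_{\mathbb{P}(\Omega_{\mathbf{P}})}(1)\otimes\pi^{*}\mathcal{S}^{-1}$ is nef --- a non-contracted curve lying in at most $\eta<n$ coordinate hyperplanes meets the corresponding nonvanishing stratum in a one-dimensional set, so some form does not vanish identically on it, while contracted curves are handled as above --- and then conclude by nef $+$ ample $=$ ample. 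With these two additions your proposal becomes the paper's proof; without them the final passage to ampleness of $\Omega_V\vert_X$ does not go through.
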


Denote 
the projectivization of the cotangent bundle $\Omega_{\mathbf{P}}$ of $\mathbf{P}$
by: 
\[
\mathbb{P}(\Omega_{\mathbf{P}})\,
:=\,
\mathsf{Proj}\,
\big(
\oplus_{k\geqslant 0}\mathsf{Sym}^k\,
\Omega_{\mathbf{P}}
\big),
\]
and denote 
the associated Serre line bundle by $\mathcal{O}_{\mathbb{P}(\Omega_{\mathbf{P}})}(1)$.
For any integers $a, b\geqslant 0$, for any $a+b$ global sections $F_1$, $\dots$, $F_{a}$, $F_{a+1}$, $\dots$, $F_{a+b}$ of arbitrary $a+b$ line bundles over $\mathbf{P}$, denote by:
\[
{}_{F_{a+1},\dots,F_{a+b}}\mathbb{P}_{F_1,\dots,F_{a}}\
\subset\
\mathbb{P}(\Omega_{\mathbf{P}}).
\]
the unique subscheme defined by
 equations $F_1, \dots, F_{a+b}, \dformal F_1,\dots, \dformal F_{a}$.
Thus, we reformulate the above theorem as:

\medskip
\noindent
{\bf Theorem~\ref{general theorem of almost the same degree case}'.}
{
\em
For generic $c+r$ sections: 
\[
F_1
\in
\mathsf{H}^0\,(\mathbf{P}, \mathcal{L}_1),
\dots,
F_{c+r}
\in
\mathsf{H}^0\,(\mathbf{P}, \mathcal{L}_{c+r}),
\]
the Serre line bundle
$\mathcal{O}_{\mathbb{P}(\Omega_{\mathbf{P}})}(1)$
is ample over the subvariety
${}_{F_{c+1},\dots,F_{c+r}}\mathbb{P}_{F_1,\dots,F_c}$.
}

\begin{proof}[Proof of Theorem~\ref{general theorem of almost the same degree case}]
We may assume that $N\geqslant 3$ and $c+r<N$, otherwise there is nothing to prove. Set $n=N-c-r$, observe that $1\leqslant n\leqslant c$.
Since ampleness is a Zariski open condition in family (Grothendieck), 
we only need to provide one ample example $H_1,\dots,H_{c+r}$. In fact, we will construct $c+r$ sections $F_1, \dots, F_{c+r}$  of the  MCM shape~\thetag{\ref{F_i-moving-coefficient-method-full-strenghth}} to conclude the proof.

\smallskip
\noindent
{\em Step 1.}
Since $d\geqslant \texttt{d}_0$, by the effective degree estimates in preceding subsection, 
we can construct integers $\{\mu^{l,k}\}$ that satisfy
the Algorithm in Subsection~\ref{Subsection: Delayed Algorithm}. Now, the structure of~\thetag{\ref{F_i-moving-coefficient-method-full-strenghth}} is fixed,
and we will choose some appropriate coefficients $A_i^{\bullet}, M_i^{\bullet;\bullet}$ for $i=1\cdots c+r$.

\smallskip
\noindent
{\em Step 2.}
For generic choices
of parameters $A_{\bullet}^{\bullet}, M_{\bullet}^{\bullet;\bullet}$, both $X, V$ are smooth complete, and moreover, 
for all $1\leqslant \eta \leqslant n=N-c-r$, for all indices 
$
0\leqslant v_1<\dots<v_{\eta}\leqslant N
$, the further intersection varieties
${}_{v_1,\dots,v_{\eta}}X$ are all smooth complete.
The reasoning is much the same as in Bertini's Theorem.
For the sake of completeness, we provide a proof in
Subsection~\ref{subsection: Bertini-type assertions} below.

\smallskip
\noindent
{\em Step 3.}
For generic choices
of parameters $A_{\bullet}^{\bullet}, M_{\bullet}^{\bullet;\bullet}$,
all the constructed negatively twisted symmetric differential forms have discrete based loci outside `coordinates vanishing part',
see Subsection~\ref{subsection: Controlling the base loci} for details.
This is the core of the moving coefficients method.

\smallskip
\noindent
{\em Step 4.}
Choose any generic parameters $A_{\bullet}^{\bullet}, M_{\bullet}^{\bullet;\bullet}$
that satisfy the properties in the above two steps.
We claim that the corresponding sections $F_1,\dots, F_{c+r}$
constitute one ample example.

\begin{proof}[Proof of the claim]
Abbreviate $\mathbb{P}:={}_{F_{c+1},\dots,F_{c+r}}\mathbb{P}_{F_1,\dots,F_c}$ and  ${}_{v_1,\dots,v_{\eta}}\mathbb{P}:={}_{F_{c+1},\dots,F_{c+r},\zeta_{v_1},\dots,\zeta_{v_{\eta}}}\mathbb{P}_{F_1,\dots,F_c}$. Let
$
\pi
\colon
\mathbb{P}(\Omega_{\mathbf{P}})
\longrightarrow 
\mathbf{P}
$
be the canonical projection.
Note that all the obtained symmetric differential forms in Step 3 can be viewed as sections (when $\eta=0$, we agree ${}_{v_1,\dots,v_{\eta}}\mathbb{P}=\mathbb{P}$):
\begin{equation}
\label{all great sections}
{}_{v_1,\dots,v_{\eta}}\omega_{\text{\ding{100}}}\
\in\
\mathsf{H}^0
\big(
{}_{v_1,\dots,v_{\eta}}\mathbb{P}, \mathcal{O}_{\mathbb{P}(\Omega_{\mathbf{P}})}(n-\eta)\otimes \pi^*{}_{v_1,\dots,v_{\eta}}\mathcal{L}_{\text{\ding{100}}}
\big),
\end{equation}
where we always use \text{\ding{100}} to denote auxiliary integers, and where all
${}_{v_1,\dots,v_{\eta}}\mathcal{L}_{\text{\ding{100}}}<0$ are some negative line bundles.
Choose an ample $\mathbb{Q}$-divisor $\mathcal{S}>0$
over $\mathbf{P}$ such that all ${}_{v_1,\dots,v_{\eta}}\mathcal{L}_{\text{\ding{100}}}/(n-\eta)+
\mathcal{S}<0$ are still negative. Then we claim that
$\mathcal{N}:=\mathcal{O}_{\mathbb{P}(\Omega_{\mathbf{P}})}(1)\otimes \pi^*\mathcal{S}^{-1}$ is nef over
$\mathbb{P}$. 

Indeed, for any irreducible curve
$C\subset \mathbb{P}$, if $C$ lies in at least
$n$ `coordinate hyperplanes' defined by $\zeta_{v_1}, \dots, \zeta_{v_n}$, then by Step 2 we see that $C$ must contract to a point by $\pi$, thus
$\mathcal{N}\big{\vert}_C\cong\mathcal{O}_{\mathbb{P}(\Omega_{\mathbf{P}})}(1)\big{\vert}_C$
 is not only nef but ample.
Assume now that 
$C$ lies in at best
$\eta<n$ `coordinate hyperplanes' defined by $\zeta_{v_1}, \dots, \zeta_{v_\eta}$ ($\eta$ could be zero).
Since the base locus of all sections in~\thetag{\ref{all great sections}}
is discrete over the `coordinates nonvanishing part'
$
\{
\zeta_{r_0}\cdots \zeta_{r_{N-\eta}}
\neq
0
\}
$, 
and $C\cap \{
\zeta_{r_0}\cdots \zeta_{r_{N-\eta}}
\neq
0
\}$ is one-dimensional,
we can find some
${}_{v_1,\dots,v_{\eta}}\omega_{\text{\ding{100}}}$ such that ${}_{v_1,\dots,v_{\eta}}\omega_{\text{\ding{100}}}\big{\vert}_C\neq 0$. Thus the intersection number  $C\,\cdot\,\mathcal{O}_{\mathbb{P}(\Omega_{\mathbf{P}})}(n-\eta)\otimes \pi^*{}_{v_1,\dots,v_{\eta}}\mathcal{L}_{\text{\ding{100}}}$ 
is $\geqslant 0$. Since 
${}_{v_1,\dots,v_{\eta}}\mathcal{L}_{\text{\ding{100}}}/(n-\eta)+
\mathcal{S}<0$, we immediately conclude that
$C\cdot\mathcal{N}\geqslant 0$. 

Lastly, since $\mathcal{S}>0$ over $\mathbf{P}$,
there exists some large integer $m\gg 1$ such that 
$\mathcal{P}:=\mathcal{O}_{\mathbb{P}(\Omega_{\mathbf{P}})}(1)\otimes \pi^*\mathcal{S}^{m}>0$ is positive
over $\mathbb{P}(\Omega_{\mathbf{P}})$. In particular,
it is also positive over $\mathbb{P}$.
Since `nef+ample=ample', we have 
$m\, \mathcal{N}+\mathcal{P}> 0$ over $\mathbb{P}$,
that is $\mathcal{O}_{\mathbb{P}(\Omega_{\mathbf{P}})}(1)\big{\vert}_{\mathbb{P}}>0$.
\end{proof}
Thus we conclude the proof.
\end{proof}

Finally, using the product coup, we obtain 

\begin{proof}
[Proof of Theorem~\ref{Main Theorem, 2nd}]
For every $i=1\cdots c+r$,
since $\mathcal{L}_i$ is almost
proportional to $\mathcal{L}$, by Proposition~\ref{explain the definition of proportional},
there exist some positive integers $s_i, l_i\geqslant 1$, $d_i\geqslant \texttt{d}_0$ such that
$\mathcal{L}_i^{s_i}
=
\mathcal{A}_i
\otimes
\mathcal{L}^{l_i}
\otimes 
\mathcal{L}^{l_i\,d_i}$,
where the line bundle $\mathcal{A}_i$ satisfies that
$\mathcal{A}_i\otimes\mathcal{L}^{l_i}$ is very ample and that
$\mathcal{A}_i\otimes\mathcal{L}^{-2\,l_i}<0$ is negative.
In order to apply Theorem~\ref{general theorem of almost the same degree case}',
first of all, we need an

\begin{Observation}
\label{technical observation}
There exist some positive integers $\widetilde{s}_1,\dots, 
\widetilde{s}_{c+r}, \ell\geqslant 1$ and $d\geqslant  \texttt{d}_0$ such that:
\[
\mathcal{L}_i^{\widetilde{s}_i}\,
=\,
\widetilde{\mathcal{A}_i}
\otimes 
\widetilde{\mathcal{L}}
\otimes
\widetilde{\mathcal{L}}^{d},
\qquad
\mathcal{L}_i^{\widetilde{s}_i+1}\,
=\,
\widetilde{\mathcal{B}_i}
\otimes 
\widetilde{\mathcal{L}}
\otimes
\widetilde{\mathcal{L}}^{d}
\qquad
{\scriptstyle (i\,=\,1\,\cdots\, c+r)},
\]
where $\widetilde{\mathcal{L}}:=\mathcal{L}^{\ell}$
is very ample,
and where
$
\widetilde{\mathcal{A}_i}
\otimes 
\widetilde{\mathcal{L}}$,
$\widetilde{\mathcal{B}_i}
\otimes 
\widetilde{\mathcal{L}}$
are very ample, and where
$
\widetilde{\mathcal{A}_i}
\otimes 
\widetilde{\mathcal{L}}^{-2}
$,
$
\widetilde{\mathcal{B}_i}
\otimes 
\widetilde{\mathcal{L}}^{-2}
$
are negative.
\end{Observation}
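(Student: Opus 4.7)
The plan is to construct $d := \texttt{d}_0$, a common large integer $\ell$, and for each $i$ an appropriate $\widetilde{s}_i$ lying in a numerical ``window'' produced by the strictness of the almost-proportional condition. First observe that since $\mathcal{L}_i$ is ample, the four required conditions per index $i$ collapse to just three substantive ones: that both $\mathcal{L}_i^{\widetilde{s}_i}\otimes\mathcal{L}^{-\ell d}$ and $\mathcal{L}_i^{\widetilde{s}_i+1}\otimes\mathcal{L}^{-\ell d}$ are very ample, and that $\mathcal{L}_i^{\widetilde{s}_i+1}\otimes\mathcal{L}^{-\ell(3+d)}$ is negative; the fourth condition $\widetilde{\mathcal{A}_i}\otimes\widetilde{\mathcal{L}}^{-2}=(\widetilde{\mathcal{B}_i}\otimes\widetilde{\mathcal{L}}^{-2})\otimes\mathcal{L}_i^{-1}$ being negative is then automatic, since the tensor of two anti-ample line bundles is anti-ample.

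For each $i$, the almost-proportional hypothesis, together with density of $\mathbb{Q}_+$ in $\mathbb{R}_+$ and integer scaling, furnishes positive integers $p_i, q_i$ with $q_i[\mathcal{L}] < p_i[\mathcal{L}_i] < (1+\epsilon_0)q_i[\mathcal{L}]$, both inequalities strict in the open ample cone. By openness there exists a rational $\eta_i > 0$ such that the refined inequalities
\[
(q_i+\eta_i)[\mathcal{L}] \;<\; p_i[\mathcal{L}_i] \;<\; (1+\epsilon_0-\eta_i)\,q_i[\mathcal{L}]
\]
still hold in the ample cone. Decomposing each side of the required conditions as a positive multiple of a strict ample class plus a scalar times $[\mathcal{L}]$, and absorbing a sufficient number $K$ of copies of the very ample $\mathcal{L}$ to upgrade ampleness of $\mathcal{L}_i^{\widetilde{s}_i}\otimes\mathcal{L}^{-\ell d-K}$ into very ampleness of $\mathcal{L}_i^{\widetilde{s}_i}\otimes\mathcal{L}^{-\ell d}$ (and similarly at $\widetilde{s}_i+1$), I would verify that all three conditions hold whenever
\[
\frac{(\ell d+K)\,p_i}{q_i+\eta_i} \;\leq\; \widetilde{s}_i \;\leq\; \frac{\ell(3+d)\,p_i}{(1+\epsilon_0-\eta_i)\,q_i} - 1.
\]

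The main obstacle is the sharp identity $\epsilon_0\,\texttt{d}_0 = 3$: setting $\eta_i = 0$ in the window formula collapses its length to \emph{exactly zero}, which is precisely what dictates the delicate definition of $\epsilon_0$ in Definition~\ref{almost parallel}. Exploiting $\eta_i > 0$ is therefore essential; substituting $\texttt{d}_0\,\epsilon_0 = 3$, one finds the window length is of order $\ell\, p_i\, \eta_i \bigl(3 + d(1+q_i)\bigr)/\bigl((1+\epsilon_0-\eta_i)(q_i+\eta_i)\,q_i\bigr) - O(1)$, growing linearly in $\ell$. Choosing $\ell$ simultaneously large enough that each of the $c+r$ windows has length at least one, I would pick an integer $\widetilde{s}_i$ in each window. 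Setting $\widetilde{\mathcal{L}} := \mathcal{L}^{\ell}$ (very ample as a positive tensor power of the very ample $\mathcal{L}$), $\widetilde{\mathcal{A}_i} := \mathcal{L}_i^{\widetilde{s}_i}\otimes\mathcal{L}^{-\ell(1+d)}$ and $\widetilde{\mathcal{B}_i} := \widetilde{\mathcal{A}_i}\otimes\mathcal{L}_i$, all four asserted properties then follow from the three displayed conditions together with the tensor operation noted at the outset.
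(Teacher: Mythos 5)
Your reduction of the four requirements to three, the passage to integer classes $q_i[\mathcal{L}]<p_i[\mathcal{L}_i]<(1+\epsilon_0)q_i[\mathcal{L}]$, the extra wiggle $\eta_i>0$ from openness of the ample cone, and the observation that the window degenerates exactly when $\eta_i=0$ (since $(3+\texttt{d}_0)/(1+\epsilon_0)=\texttt{d}_0$) are all sound, and the negativity conditions are purely numerical, so that part of the window argument works. The genuine gap is the step where you ``absorb a sufficient number $K$ of copies of the very ample $\mathcal{L}$ to upgrade ampleness of $\mathcal{L}_i^{\widetilde{s}_i}\otimes\mathcal{L}^{-\ell d-K}$ into very ampleness of $\mathcal{L}_i^{\widetilde{s}_i}\otimes\mathcal{L}^{-\ell d}$.'' Very ampleness is not a numerical property, and ``ample $\otimes$ very ample'' is in general not very ample (already on curves one finds ample $\mathcal{A}$ and very ample $\mathcal{M}$ with $\mathcal{A}\otimes\mathcal{M}$ not very ample). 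What your argument needs is a \emph{single} $K$, chosen before $\ell$ and the $\widetilde{s}_i$, such that $\mathcal{N}\otimes\mathcal{L}^{K}$ is very ample for every ample line bundle $\mathcal{N}$ in the infinite family $\mathcal{L}_i^{\widetilde{s}_i}\otimes\mathcal{L}^{-\ell d-K}$ produced as $\ell$ grows; such a uniform statement is a Fujita-type assertion, not an elementary fact, and it is certainly not available off the shelf over an algebraically closed field of arbitrary characteristic, which is the setting of the paper. Serre-type arguments only give very ampleness of $\mathcal{N}\otimes\mathcal{L}^{K}$ with $K$ depending on the \emph{fixed} bundle $\mathcal{N}$, which is incompatible with your order of quantifiers.

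The paper sidesteps this entirely: it quotes Proposition~\ref{explain the definition of proportional} and Remark~\ref{we can take d=d_0} to get, for each $i$, an exact relation $\mathcal{L}_i^{s_i}=\mathcal{A}_i\otimes\mathcal{L}^{l_i}\otimes\mathcal{L}^{l_i d}$ with $\mathcal{A}_i\otimes\mathcal{L}^{l_i}$ \emph{already} very ample and $\mathcal{A}_i\otimes\mathcal{L}^{-2l_i}$ negative, equalizes the $d_i$ and then the $l_i$ by taking powers $l/l_i$, and finally raises everything to a large power $m$ so that the extra factor $\mathcal{L}_i$ needed for $\widetilde{\mathcal{B}_i}$ is absorbed: the only very ampleness claims are for $(\mathcal{A}_i\otimes\mathcal{L}^{l})^{m}$ and $\mathcal{L}_i\otimes(\mathcal{A}_i\otimes\mathcal{L}^{l})^{m}$, i.e.\ large powers of \emph{fixed} bundles (possibly twisted by one fixed bundle), where the elementary facts ``power of very ample is very ample'' and ``globally generated $\otimes$ very ample is very ample'' suffice in any characteristic. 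Your numerical window could be repaired in the same spirit — first choose $\ell,\widetilde{s}_i$ so that $\mathcal{N}_i:=\mathcal{L}_i^{\widetilde{s}_i}\otimes\mathcal{L}^{-\ell(1+d)}$ is ample with the required margins, then replace $(\ell,\widetilde{s}_i)$ by $(m\ell,m\widetilde{s}_i)$ for one large $m$ so that $(\mathcal{N}_i\otimes\mathcal{L}^{\ell})^{m}$ and $\mathcal{L}_i\otimes(\mathcal{N}_i\otimes\mathcal{L}^{\ell})^{m}$ become very ample — but as written the proof is incomplete at this point.
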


\begin{proof}
First, by Remark~\ref{we can take d=d_0},
we may assume that
$d_1=\cdots=d_{c+r}=d\geqslant\texttt{d}_0$.

Next, we may assume that $l_1=\cdots=l_{c+r}=l$.
Otherwise, we can choose a positive integer
$l$ which is divisible by $l_1,\dots,l_{c+r}$,
then we receive\big/rewrite:
\[
\mathcal{L}_i^{s_i\,l/l_i}
=
\big(
\mathcal{A}_i
\otimes
\mathcal{L}^{l_i}
\otimes 
\mathcal{L}^{l_i\,d}
\big)^{l/l_i}
=
\mathcal{A}_i^{l/l_i}
\otimes
\mathcal{L}^{l}
\otimes 
\mathcal{L}^{l\,d}
\qquad
{\scriptstyle (i\,=\,1\,\cdots\, c+r)},
\]
while 
$\mathcal{A}_i^{l/l_i}
\otimes
\mathcal{L}^{l}
=
\big(
\mathcal{A}_i
\otimes
\mathcal{L}^{l_i}
\big)^{l/l_i}
$
remains very ample and also
$\mathcal{A}_i^{l/l_i}
\otimes
\mathcal{L}^{-2\,l}
=
\big(
\mathcal{A}_i
\otimes
\mathcal{L}^{-2\,l_i}
\big)^{l/l_i}
<
0
$.

Lastly, we can choose one large integer $m\gg 1$ such that,
for all $i=1\cdots c+r$,
not only
$
\mathcal{L}_i
\otimes
(
\mathcal{A}_i
\otimes
\mathcal{L}^{l}
)^{m}
$
are very ample,
but also
$
\mathcal{L}_i
\otimes
(
\mathcal{A}_i
\otimes
\mathcal{L}^{-2\,l}
)^{m}
<
0
$
are negative.
Thus, the following data:
\[
\ell\,
:=\,
m\,l,
\quad
\widetilde{s}_i\,
:=\,
m\,s_i,
\quad
\widetilde{\mathcal{A}_i}\,
:=\,
\mathcal{A}_i^{m},
\quad
\widetilde{\mathcal{B}_i}\,
:=\,
\mathcal{L}_i
\otimes
\mathcal{A}_i^{m}
\qquad
{\scriptstyle (i\,=\,1\,\cdots\, c+r)}
\]
satisfy the claimed observation.
\end{proof}

Now, we can set:
\begin{equation}
\label{product formula d=?}
\texttt{d}\,
=\,
\texttt{d}(\mathcal{L}_{1},\dots,
\mathcal{L}_{c+r}, \mathcal{L})\,
=\,
\max_{1\leqslant i\leqslant c+r}\,
\big\{
\widetilde{s}_i\,(\widetilde{s}_i-1)
\big\}.
\end{equation}
For any integers 
$
d_1,
\dots,
d_{c+r}\, 
\geqslant\, 
\texttt{d}
$,
all of them can be written as:
\[
d_i\,
=\,
p_i\,\widetilde{s}_i
+
q_i\,(\widetilde{s}_i+1)
\qquad
{\scriptstyle (i\,=\,1\,\cdots\, c+r)}
\]
for some integers $p_i,q_i\geqslant 0$.
Let every:
\[
F_i
:=
f^i_{1}
\cdots
f^i_{p_i}\,
f^i_{p_i+1}
\cdots
f^{i}_{p_i+q_i}\
\in\
\mathsf{H}^0\,
\big(
\mathbf{P},\,
\mathcal{L}_i^{d_i}
\big)
\]
be a product of some sections:
\[
f^i_{1},
\dots,
f^i_{p_i}\
\in\
\mathsf{H}^0\,
\big(
\mathbf{P},\,
\mathcal{L}_i^{\widetilde{s}_i}
\big),
\quad
f^i_{p_i+1},
\dots,
f^i_{p_i+q_i}\
\in\
\mathsf{H}^0\,
\big(
\mathbf{P},\,
\mathcal{L}_i^{\widetilde{s}_i+1}
\big)
\]
to be chosen, then the product coup reveals the decomposition:
\[
\aligned
{}_{F_{c+1},\dots,F_{c+r}}\mathbb{P}_{F_1,\dots,F_c}\,
=\,
&
\cup_{k=0\cdots c}
\cup_{1\leqslant i_1<\cdots < i_k \leqslant c}
\cup_{
\substack{
1\leqslant v_{i_j}\leqslant p_{v_j}+q_{v_j}
\\
j=1\cdots k
}
}
\cup_{
\substack{
\{
r_1,\dots,r_{c-k}
\}
=
\{1,\dots,c\}
\setminus
\{
i_1,\dots,i_k
\}
\\
1
\leqslant 
w_{r_l}^1
<
w_{r_l}^2
\leqslant
p_{r_l}
+
q_{r_l}
\\
l=1\cdots c-k
}
}
\cup_{
\substack{
1
\leqslant
u_j
\leqslant
p_j+q_j
\\
j=c+1\cdots c+r
}
}
\\
&
\ \ \ \ \ \ \ \ \ \ \ \ \ \
{}_{f_{w_{r_1}^1}^{r_1},f_{w_{r_1}^2}^{r_1},\dots,
f_{w_{r_{c-k}}^1}^{r_{c-k}},f_{w_{r_{c-k}}^2}^{r_{c-k}},
f_{u_{c+1}}^{c+1},\dots,f_{u_{c+r}}^{c+r}
}
\mathbb{P}_{f_{v_{i_1}}^{i_1},\dots,f_{v_{i_k}}^{i_k}}.
\endaligned
\]
Now, applying Theorem~\ref{general theorem of almost the same degree case}', for generic choices of
$\{f_{\bullet}^{\bullet}\}$, the Serre line bundle
$\mathcal{O}_{\mathbb{P}(\Omega_{\mathbf{P}})}(1)$
is ample on every subscheme
$
{}_{f_{w_{r_1}^1}^{r_1},f_{w_{r_1}^2}^{r_1},\dots,
f_{w_{r_{c-k}}^1}^{r_{c-k}},f_{w_{r_{c-k}}^2}^{r_{c-k}},
f_{u_{c+1}}^{c+1},\dots,f_{u_{c+r}}^{c+r}
}
\mathbb{P}_{f_{v_{i_1}}^{i_1},\dots,f_{v_{i_k}}^{i_k}}
$, and therefore
is also ample on their union 
${}_{F_{c+1},\dots,F_{c+r}}\mathbb{P}_{F_1,\dots,F_c}$.
Since ampleness is a generic property in family, we 
conclude the proof.
\end{proof}

\subsection{Effective lower degree bound $N^{N^2}$ of Theorem~\ref{Gentle generalization of Debarre conjecture}}
\label{subsection: proof of theorem 2}
Now, we provide an effective degree estimate of
Theorem~\ref{Main Theorem, 2nd} in the case $\mathcal{L}_1=\cdots=\mathcal{L}_{c+r}=\mathcal{L}$.

When $N=1, 2$, Theorem~\ref{Main Theorem, 2nd} holds trivially
for $\texttt{d}=N^{N^2}$.
When $N\geqslant 3$,
denote the trivial line bundle on $\mathbf{P}$ by
$\mathbf{0}_{\mathbf{P}}$.
Note that in Observation~\ref{technical observation}
we can take
$\widetilde{s}_1=\cdots= 
\widetilde{s}_{c+r}=\texttt{d}_0+1$, $\ell=1$, so that:
\[
\mathcal{L}^{\texttt{d}_0+1}\,
=\,
\mathbf{0}_{\mathbf{P}}
\otimes
\mathcal{L}
\otimes
\mathcal{L}^{\texttt{d}_0},
\qquad
\mathcal{L}^{\texttt{d}_0+2}\,
=\,
\mathcal{L}
\otimes
\mathcal{L}
\otimes
\mathcal{L}^{\texttt{d}_0}
\]
satisfy the requirements.
Thus by~\thetag{\ref{product formula d=?}}
we can set:
\[
\texttt{d}\,
=\,
\texttt{d}(\mathcal{L})\,
=\,
\max_{1\leqslant i\leqslant c+r}\,
\big\{
\widetilde{s}_i\,(\widetilde{s}_i-1)
\big\}\,
=\,
\texttt{d}_0\,(\texttt{d}_0+1)
=
(N^{N^2/2}-1)\,
N^{N^2/2}
<
N^{N^2}.
\]
In particular, when $r=0$, we recover
Theorem~\ref{Gentle generalization of Debarre conjecture}.

\section{\bf Some Technical Details}
\label{section: Some Technical Details}
\subsection{Surjectivity of evaluation maps}
Recalling 
the notation in
Definition~\ref{define formal differential},
at every closed point $z\in \mathbf{P}$, for every tangent vector $\xi\in\mathrm{T}_{\mathbf{P}}\big{\vert}_z$,
we can choose any local trivialization $(U,s)$ 
of the line bundle $\mathcal{S}$
near point $z$, and then evaluate $S$, $\dformal S$
at $(z,\xi)$ by:
\[
\aligned
S(z)\,(U,s)\,
&
:=\,
S/s\,(z)\
\in\
\mathbb{K}, 
\\
\dformal S(z,\xi)\,(U,s)\,
&
:=\,
\mathrm{d}\,
(S/s)\,
(z,\xi)\
\in\
\mathbb{K}.
\endaligned
\] 
If $(U,s')$ is another local trivialization of $\mathcal{S}$,
then we have the transition formula:
\begin{equation}
\label{surjectivity is independent of trivializations}
\begin{pmatrix}
S
\\
\dformal S
\end{pmatrix}\,
(z,\xi)\,
(U,s)\,
=\,
\underbrace{
\begin{pmatrix}
s'/s
&
0
\\
\mathrm{d}\,(s'/s)
&
s'/s
\end{pmatrix}
}_{
\text{invertible}
}\,
\cdot\,
\begin{pmatrix}
S
\\
\dformal
S
\end{pmatrix}\,
(z,\xi)\,
(U,s')
\end{equation}
Thanks to the above identity, 
in assertions which do not depend on the particular choice of $(U, s)$,
we can just write $S(z)$, $\dformal S(z,\xi)$
by dropping $(U,s)$.
 
\begin{Proposition}
Let $\mathcal{S}$ be a very-ample line bundle over a smooth $\mathbb{K}$-variety $\mathbf{P}$. Then one has:

\begin{itemize}
\item[\bf (i)]
at every closed point $z\in \mathbf{P}$, for any nonzero tangent vector $0\neq\xi\in\mathrm{T}_{\mathbf{P}}\big{\vert}_z$,
the evaluation map:
\[
\aligned
\begin{pmatrix}
v_z
\\
d_z(\xi)
\end{pmatrix}\,
\colon\
\mathsf{H}^0
(
\mathbf{P},
\mathcal{S}
)\,
&
\longrightarrow\,
\mathbb{K}^2
\\
S\,
&
\longmapsto\,
\big(
S(z), 
\dformal
S(z,\xi)
\big)^{\mathrm{T}}
\endaligned
\]
is surjective;

\smallskip
\item[\bf (ii)]
at every closed point $z\in \mathbf{P}$, for any 
$N=\dim\,\mathbf{P}$ linearly independent tangent vectors $\xi_1,\dots,\xi_N\in\mathrm{T}_{\mathbf{P}}\big{\vert}_z$,
the evaluation map:
\[
\aligned
\begin{pmatrix}
v_z
\\
d_z(\xi_1)
\\
\vdots
\\
d_z(\xi_N)
\end{pmatrix}\,
\colon\
\mathsf{H}^0
(
\mathbf{P},
\mathcal{S}
)\,
&
\longrightarrow\,
\mathbb{K}^{N+1}
\\
S\,
&
\longmapsto\,
\big(
S(z), 
\dformal
S(z,\xi_1),
\dots,
\dformal
S(z,\xi_N)
\big)^{\mathrm{T}}
\endaligned
\]
is surjective.
\end{itemize}
\end{Proposition}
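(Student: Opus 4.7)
The plan is to reduce both assertions to the classical characterization of very ampleness: the global sections of $\mathcal{S}$ separate points and separate tangent vectors. Concretely, the associated morphism from $\mathbf{P}$ into the projectivization of $\mathsf{H}^0(\mathbf{P},\mathcal{S})^{*}$ is a closed embedding, hence base-point-free with injective differential at every closed point. Dualizing at $z$ yields two facts that will drive everything: \textbf{(a)} the evaluation $v_z\colon \mathsf{H}^0(\mathbf{P},\mathcal{S})\to \mathbb{K}$ is surjective, i.e.\ some $S_0$ has $S_0(z)\neq 0$; and \textbf{(b)} after trivializing $\mathcal{S}$ near $z$ by such an $S_0$, the induced coboundary $W:=\ker(v_z)\to \mathfrak{m}_z/\mathfrak{m}_z^2\cong \Omega_{\mathbf{P}}\big|_z$, $S\mapsto \mathrm{d}(S/S_0)_z$, is surjective.

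First I would prove \textbf{(i)}. Fix such an $S_0$. The transition formula~\thetag{\ref{surjectivity is independent of trivializations}} shows that for $S\in W$ the lower-left term contributes nothing (it is multiplied by $S(z)=0$), so $\dformal S(z,\xi)$ is intrinsic on $W$. By \textbf{(b)} together with the nonvanishing of $\xi$, the linear functional $d_z(\xi)\colon W\to \mathbb{K}$ is nonzero, hence surjective. The exact sequence
\[
0\,\longrightarrow\, d_z(\xi)(W)\,\longrightarrow\, \mathrm{Image}\begin{pmatrix}v_z\\ d_z(\xi)\end{pmatrix}\,\longrightarrow\, \mathrm{Image}(v_z)\,\longrightarrow\,0
\]
then has both outer terms equal to $\mathbb{K}$, forcing the middle term to be all of $\mathbb{K}^2$.

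Part \textbf{(ii)} follows by the same mechanism. The basis $\xi_1,\dots,\xi_N$ of $\mathrm{T}_{\mathbf{P}}\big|_z$ determines a linear isomorphism $\Omega_{\mathbf{P}}\big|_z \xrightarrow{\sim} \mathbb{K}^N$ by evaluation; composing with the surjection $W\twoheadrightarrow \Omega_{\mathbf{P}}\big|_z$ from \textbf{(b)} yields a surjection $W\twoheadrightarrow\mathbb{K}^N$ which is precisely $\bigl(d_z(\xi_1),\dots,d_z(\xi_N)\bigr)$. Combined with \textbf{(a)} via the short exact sequence $0\to W\to \mathsf{H}^0(\mathbf{P},\mathcal{S})\to \mathbb{K}\to 0$, this produces surjectivity onto $\mathbb{K}^{N+1}$.

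The only delicate point is to match the formal differential $\dformal S$ of Definition~\ref{define formal differential} with the classical coboundary map $W\to \mathfrak{m}_z\mathcal{S}_z/\mathfrak{m}_z^2\mathcal{S}_z$ that appears in the very ample criterion. Once a trivialization $(U,S_0)$ is fixed, both reduce by construction to $S\mapsto \mathrm{d}(S/S_0)_z$, and the transition formula~\thetag{\ref{surjectivity is independent of trivializations}} guarantees intrinsicality on $W$. Hence no real obstacle arises, and the proposition reduces to the standard fact that a very ample line bundle is base-point-free and separates tangent directions.
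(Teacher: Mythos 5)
Your proof is correct and follows essentially the same route as the paper's: both reduce the statement to the classical characterization of very ampleness (base-point-freeness plus separation of tangent vectors, i.e.\ surjectivity of $\mathsf{H}^0(\mathbf{P},\mathcal{S})\to\mathcal{S}\otimes\mathcal{O}_{\mathbf{P},z}/\mathfrak{m}_z^{2}$), use the transition formula~\eqref{surjectivity is independent of trivializations} to handle the trivialization-dependence of $\dformal S(z,\xi)$, and pass between \textbf{(i)} and \textbf{(ii)} by elementary linear algebra. The only cosmetic point is that for a not-necessarily-projective variety $\mathbf{P}$ very ampleness furnishes an immersion rather than a closed embedding into projective space, but an immersion already yields your facts (a) and (b) at every closed point, so nothing in the argument is affected.
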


\begin{proof}
We have the following three elementary observations.

\begin{itemize}
\item[(1)]
By transition formula~\thetag{\ref{surjectivity is independent of trivializations}},
property {\bf (i)} is independent of the choice of local trivialization $(U,s)$ of $\mathcal{S}$ near $z$,
so it makes sense.

\smallskip
\item[(2)]
In any fixed local trivialization $(U,s)$ of $\mathcal{S}$ near $z$, by basic linear algebra,
properties {\bf (i)}, {\bf (ii)} are equivalent to each other.

\smallskip
\item[(3)]
Property {\bf (i)} is the usual property
of `very-ampleness'.
\end{itemize}

Thus we may conclude the proof
by the reasoning `\text{very-ampleness}' $\Longrightarrow$ {\bf (i)}
$\Longleftrightarrow$ {\bf (ii)}.
\end{proof}

\begin{Proposition}
\label{surjectivity lemma}
Let $\mathcal{S}$ be a very ample line bundle over a smooth $\mathbb{K}$-variety $\mathbf{P}$, and let 
$\mathcal{A}$ be any line bundle over $\mathbf{P}$
with a nonzero section
$A\neq 0$. Then, at every closed point $z\in \mathsf{D}(A)\subset\mathbf{P}$, for any nonzero tangent vector $0\neq\xi\in\mathrm{T}_{\mathbf{P}}\big{\vert}_z$,
the evaluation map:
\[
\aligned
\begin{pmatrix}
A\cdot v_z
\\
d_z(A\cdot\,)(\xi)
\end{pmatrix}\,
\colon\
\mathsf{H}^0
(
\mathbf{P},
\mathcal{S}
)\,
&
\longrightarrow\,
\mathbb{K}^2
\\
S\,
&
\longmapsto\,
\Big(
(A\cdot S)\,(z), 
\dformal
(A\cdot S)\,(z,\xi)
\Big)^{\mathrm{T}}
\endaligned
\]
is surjective.
\end{Proposition}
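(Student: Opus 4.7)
The plan is to reduce this surjectivity claim to part \textbf{(i)} of the preceding proposition by a single Leibniz-rule manipulation. First I would fix local trivializations $(U,a)$ of $\mathcal{A}$ and $(U,s)$ of $\mathcal{S}$ near $z$, so that $\mathcal{A}\otimes\mathcal{S}$ is trivialized by $a\otimes s$ on $U$. Since by hypothesis $z\in\mathsf{D}(A)$, the scalar $A(z):=(A/a)(z)\in\mathbb{K}$ is nonzero. By the transition formula~\thetag{\ref{surjectivity is independent of trivializations}} (applied to both $\mathcal{A}$ and $\mathcal{S}$), whether the evaluation map in question is surjective does not depend on the particular trivializations, so I am free to work in this one.

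Next I would invoke the Leibniz identity for $\dformal$ established right after Definition~\ref{define formal differential}, namely $\dformal(A\cdot S)=\dformal A\cdot S+A\cdot\dformal S$. Evaluating at $(z,\xi)$ then yields the matrix equation
\[
\begin{pmatrix}(A\cdot S)(z)\\[2pt]\dformal(A\cdot S)(z,\xi)\end{pmatrix}
\,=\,
\underbrace{\begin{pmatrix}A(z) & 0\\[2pt] \dformal A(z,\xi) & A(z)\end{pmatrix}}_{=:\,M_{z,\xi}}\cdot\begin{pmatrix}S(z)\\[2pt]\dformal S(z,\xi)\end{pmatrix}.
\]
The coefficient matrix $M_{z,\xi}$ has determinant $A(z)^{2}\neq 0$, hence defines an invertible $\mathbb{K}$-linear endomorphism of $\mathbb{K}^{2}$.

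Finally, since $\mathcal{S}$ is very ample, part \textbf{(i)} of the preceding proposition asserts that the map $S\mapsto\bigl(S(z),\dformal S(z,\xi)\bigr)^{\mathrm{T}}$ from $\mathsf{H}^{0}(\mathbf{P},\mathcal{S})$ to $\mathbb{K}^{2}$ is surjective. Composing with the invertible linear map $M_{z,\xi}$ preserves surjectivity, which is precisely the assertion. There is no genuine obstacle here: the only mild subtlety is confirming the trivialization-independence of the statement, and that has already been done in the preceding proposition, so I would simply cite it.
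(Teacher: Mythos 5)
Your proposal is correct and is essentially the paper's own argument: the Leibniz rule factors the evaluation map as the invertible lower-triangular matrix $\begin{pmatrix} A(z) & 0 \\ \dformal A(z,\xi) & A(z)\end{pmatrix}$ (invertible since $A(z)\neq 0$) composed with the surjective evaluation map $(v_z, d_z(\xi))^{\mathrm{T}}$ from part \textbf{(i)} of the preceding proposition. The extra remark on trivialization-independence is a harmless elaboration of what the paper already covers via its transition formula.
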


\begin{proof}
It is a direct consequence of the formula:
\[
\begin{pmatrix}
A\cdot v_z
\\
d_z(A\cdot\,)(\xi)
\end{pmatrix}\,
=\,
\underbrace{
\begin{pmatrix}
A(z)
&
0
\\
\dformal
A
(z,\xi)
&
A(z)
\end{pmatrix}
}_{
\text{invertible since $A(z)\neq 0$}
}\,
\cdot\,
\underbrace{
\begin{pmatrix}
 v_z
\\
d_z(\xi)
\end{pmatrix}
}_{
\text{surjective}
}
\qquad
\explain{Leibniz's rule}
\]
and of the preceding proposition.
\end{proof}

\subsection{Bertini-type assertions}
\label{subsection: Bertini-type assertions}
Recalling~\thetag{\ref{F_i-moving-coefficient-method-full-strenghth}} and that for all $i=1\cdots c+r$ the line bundles
$\mathcal{A}_i\otimes \mathcal{L}^{\epsilon_i}$ are very ample, we now fulfill the
{\em step 2} in the proof of Theorem~\ref{general theorem of almost the same degree case}. 
We start with

\smallskip
\noindent
{\sl Observation 1.}
`Smooth complete' is a Zariski open condition in family.

\smallskip
\noindent
{\sl Observation 2.}
We only need to prove
that, for generic choices of
$A_{1}^{\bullet}, M_{1}^{\bullet;\bullet}$,
the hypersurface
$H_1=\{F_1=0\}\subset \mathbf{P}$ is smooth complete.

\begin{proof}
Indeed, replacing $\mathbf{P}$ by $H_1$,
we can repeat the same argument to choose $A_{2}^{\bullet}, M_{2}^{\bullet;\bullet}$,
and so on.
Thus we know that there exists at least one choice of
parameters 
$A_{\bullet}^{\bullet}, M_{\bullet}^{\bullet;\bullet}$
such that $X, V$ are both smooth complete.
Immediately, by Observation 1 above, it holds for generic choices of parameters.

Next, to show that generically 
${}_{v_1,\dots,v_{\eta}}X$ is smooth complete,
we can start with 
${}_{v_1,\dots,v_{\eta}}\mathbf{P}$ instead of 
$\mathbf{P}$,
and use the same reasoning to conclude the proof.
\end{proof}

\noindent
{\sl Observation 3.}
We can first set all $M_{1}^{\bullet;\bullet}=0$, and
then thanks to
the following proposition, we can
 find some appropriate $A_{1}^{\bullet}$ 
such that $H_1$ is smooth complete.
Thus we finish the proof of {\em step 2}.

\begin{Proposition}
\label{Bertini-type assertion}
Let $\mathbf{P}$ be a smooth $\mathbb{K}$-variety 
of dimension $N$, and 
let $\mathcal{A}$, $\mathcal{B}$ be two line bundles over $\mathbf{P}$.
Assume that $\mathcal{A}$ is very ample, and that
$\mathcal{B}$ has $N+1$ global sections $B_0,\dots,B_N$
having empty common base locus.
Then, for generic choices
of parameters $A_0,\dots,A_N\in\mathsf{H}^0(\mathbf{P},\mathcal{A})$,
the section:
\[
F\,
=\,
\sum_{j=0}^N\,
A_j\,
B_j\
\in\
\mathsf{H}^0\,(\mathbf{P},\,\mathcal{A}\otimes \mathcal{B})
\]
defines a smooth complete subvariety.
\end{Proposition}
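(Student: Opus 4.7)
The plan is a standard Bertini-style incidence argument, with Proposition~\ref{surjectivity lemma} providing the essential surjectivity input. I would set $V := \mathsf{H}^0(\mathbf{P},\mathcal{A})^{\oplus(N+1)}$, parametrizing tuples $A_\bullet = (A_0,\dots,A_N)$, and form the incidence locus
\[
Z \ := \ \Big\{ (z, A_\bullet) \in \mathbf{P} \times V \ :\ F(z) = 0 \ \text{and}\ \dformal F(z,\xi) = 0 \ \text{for every}\ \xi \in \mathrm{T}_{\mathbf{P}}\big{\vert}_z \Big\},
\]
whose fiber over $A_\bullet$ under $\pi_V \colon Z \to V$ is precisely the singular locus of $\{F=0\}$. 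Since $\{F=0\}$ is automatically complete as a closed subscheme of the projective $\mathbf{P}$, the task reduces to showing that $\pi_V$ is not dominant: then for generic $A_\bullet \in V$ the fiber is empty, and $\{F=0\}$ is smooth.

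The core step is pointwise surjectivity of the first-jet evaluation
\[
\mathrm{ev}_z \colon V \ \longrightarrow\ \mathbb{K} \oplus \mathrm{T}^{*}_{\mathbf{P}}\big{\vert}_z, \qquad A_\bullet \ \longmapsto\ \big( F(z),\ \dformal F(z,\,\cdot\,) \big),
\]
a linear map since $F = \sum_{j=0}^N A_j B_j$ is linear in $A_\bullet$. Using the hypothesis that $B_0,\dots,B_N$ have no common zero, I would fix some $j_0$ with $B_{j_0}(z)\neq 0$ and restrict $\mathrm{ev}_z$ to the subspace where $A_j = 0$ for $j\neq j_0$; the restricted map reads $A_{j_0} \mapsto \big((A_{j_0}B_{j_0})(z),\ \dformal(A_{j_0}B_{j_0})(z,\,\cdot\,)\big)$, which is surjective by Proposition~\ref{surjectivity lemma}, applied with its $\mathcal{S}$ taken to be our $\mathcal{A}$ and its auxiliary section $A$ taken to be $B_{j_0}$. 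The passage from surjectivity in a single tangent direction onto $\mathbb{K}^2$ to surjectivity onto the full $\mathbb{K}\oplus \mathrm{T}^*_{\mathbf{P}}\big{\vert}_z$ proceeds by exactly the elementary linear-algebra equivalence \textup{(i)}$\Leftrightarrow$\textup{(ii)} already invoked in the proof of the preceding surjectivity proposition, since $\dformal F(z,\,\cdot\,)$ depends linearly on the tangent direction.

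Granting this surjectivity, each fiber $Z_z$ of the first projection $Z\to\mathbf{P}$ is the kernel of a surjective linear map, hence has codimension exactly $N+1$ in $V$, so
\[
\dim Z \ \leqslant\ \dim \mathbf{P} + \dim V - (N+1) \ =\ \dim V - 1.
\]
Consequently $\pi_V(Z) \subsetneq V$ is a proper closed subvariety, and its complement is the Zariski-dense open locus of parameters $A_\bullet$ for which $\{F=0\}$ is smooth and complete. The only genuinely non-routine ingredient is the pointwise surjectivity of $\mathrm{ev}_z$; once that is secured the rest is a pure dimension count, and the completeness assertion requires no argument beyond closedness in $\mathbf{P}$.
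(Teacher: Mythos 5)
Your proof is correct and takes essentially the same route as the paper: an incidence (universal singular) locus over the parameter space of tuples $(A_0,\dots,A_N)$, whose fiber over each closed point $z\in\mathbf{P}$ has codimension $N+1>N=\dim\mathbf{P}$ because the first-jet evaluation map is surjective by Proposition~\ref{surjectivity lemma}, so the projection to the parameter space is not dominant. The only differences are cosmetic: the paper projectivizes the parameter space, and it leaves implicit the choice of $j_0$ with $B_{j_0}(z)\neq 0$ and the linear-algebra passage from surjectivity onto $\mathbb{K}^2$ to surjectivity onto $\mathbb{K}^{N+1}$, which you spell out.
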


\smallskip
\noindent
{\em Proof.} 
Denoting 
$
\mathcal{M}
:=
\oplus_{j=0}^N\,
\mathsf{H}^0(\mathbf{P},\mathcal{A})
$, then $\mathbb{P}(\mathcal{M})$
stands for the projective parameter space
of $t=(A_0, \dots, A_N)$.
Now we introduce the universal subvariety:
\[
\mathsf{S}\,
:=\,
\Big\{
([t],z)\,
\colon\,
F_t(z)=0,\,
\dformal
F_t\,(z,\xi)=0,\,
\forall\,
\xi\in
\mathrm{T}_{\mathbf{P}}\big{|}_z
\Big\}
\
\subset\
\mathbb{P}(\mathcal{M})
\times
\mathbf{P}
\]
consisting of singular points.
We claim that $\dim\,\mathsf{S}< \dim\,\mathbb{P}(\mathcal{M})$. It suffices to show that, for every closed point $z\in\mathbf{P}$,
the fibre $\mathsf{S}_z\subset \mathbb{P}(\mathcal{M})
\times \{z\}\cong \mathbb{P}(\mathcal{M})$ over
$z$ satisfies that $\cdim\,\mathsf{S}_z>\dim\,\mathbf{P}$.

Indeed, choose $N$ linearly independent tangent vector
$\xi_1,\dots, \xi_N$ at point $z$,
and then consider the formal $\mathbb{K}$-linear map:
\[
\aligned
\mathsf{ev}\,
\colon\
\mathcal{M}\,
&
\longrightarrow\,
\mathbb{K}^{N+1}
\\
t\,
&
\longmapsto\,
\big(
F_t(z), 
\dformal
F_t\,(z,\xi_1),
\dots,
\dformal
F_t\,(z,\xi_N)
\big)^{\mathrm{T}}.
\endaligned
\]
By Proposition~\ref{surjectivity lemma},
$\mathsf{ev}$ is surjective. Note that
$\mathsf{S}_z\subset \mathbb{P}(\mathcal{M})$ consists of points $[t]\in\mathbb{P}(\mathcal{M})$ such that
$\mathsf{ev}(t)={\bf 0}$. Thus we see that:
\[
\cdim\,\mathsf{S}_z\,
=\,
\underline{
\cdim\,\big(\{{\bf 0}\}\subset \mathbb{K}^{N+1}\big)\,
=\,
N+1
}\,
>\,
N\,
=\,
\dim\,\mathbf{P}.
\eqno
\qed
\]

\medskip
We will see in the proof of Proposition~\ref{why the core lemma is essential} that 
the {\sl Core Lemma} of MCM plays the same role as that of
the above underlined codimension equality\big/estimate.

\subsection{Emptiness of the base loci}
\label{subsection: Emptiness of base loci}
Recalling~\thetag{\ref{base locus of global symmetric forms}},
\thetag{\ref{base loci over coordinates vanishing part}},
in order to characterize the base loci
\[
\mathsf{BS},\,\,
{}_{v_1,\dots,v_{\eta}}\mathsf{BS}\
\subset\
\mathbb{P}(\Omega_{\mathbf{P}}),
\]
we introduce the following subvarieties (cf.~\cite[p.~62, (148)]{Xie-2015-arxiv}):
\[
\mathscr{M}_{b}^a\ \
\subset\ \
{\sf Mat}_{b\times 2(a+1)}(\mathbb K)
\qquad
{\scriptstyle (\forall\,2\,\leqslant\,a\,\leqslant\, b)}
\]
consisting of all $b\times 2(a+1)$ matrices
$(\alpha_0\mid\alpha_1\mid\dots\mid\alpha_{a}\mid\beta_0\mid\beta_1\mid
\dots\mid\beta_{a})$ such that:
\begin{itemize}

\smallskip\item[{\bf (i)}]
the sum of all $2a+2$ colums is zero:
\begin{equation}
\label{sum-of-(2N+2)-columns=0}
\alpha_0
+
\alpha_1
+
\cdots
+
\alpha_{a}
+
\beta_0
+
\beta_1
+\cdots
+\beta_{a}
=
\mathbf{0};
\end{equation}

\smallskip\item[{\bf (ii)}]
for every $\nu=0\cdots a$, there holds the rank inequality: 
\begin{equation}
\label{(ii) of M^n_2c}
\rank_{\mathbb{K}}\,
\big\{
\alpha_0,\dots,\widehat{\alpha_{\nu}},\dots,\alpha_{a},\alpha_{\nu}+(\beta_0+\beta_1+\cdots+\beta_{a})
\big\}
\leqslant 
a-1;
\end{equation}

\smallskip\item[{\bf (iii)}] 
for every $\tau=0\cdots a-1$, for every $\rho=\tau+1 \cdots a$, there holds:
{\footnotesize
\begin{equation}
\label{(iii) of M^n_2c}
\rank_{\mathbb{K}}\,
\big\{
\alpha_0+\beta_0,\alpha_1+\beta_1,\dots,\alpha_\tau+\beta_\tau,
\alpha_{\tau+1},\dots,\widehat{\alpha_\rho},\dots,\alpha_{a},
\alpha_\rho+(\beta_{\tau+1}+\cdots+\beta_{a})
\big\}
\leqslant 
a-1.
\end{equation}
}
\end{itemize}

\medskip
From now on, we only consider the closed points
in each scheme. For instance, we shall regard:
\[
\mathbb{P}(\Omega_{\mathbf{P}})\,
=\,
\Big\{
(z, [\xi])\
\colon\
\forall\,
z\in\mathbf{P},\,
\xi\in
\mathrm{T}_{\mathbf{P}}\big{|}_z
\Big\}.
\]
By the same reasoning as in~\cite[Proposition 9.3]{Xie-2015-arxiv}, we get:
\begin{Proposition}
\label{characterization-BS-for-nonvanishing-coordinates}
For generic choices
of parameters $A_{\bullet}^{\bullet}, M_{\bullet}^{\bullet;\bullet}$,
a point: 
\[
(z,[\xi])\
\in\
\mathbb{P}(\Omega_{\mathbf{P}})
\setminus
\{\zeta_0\cdots \zeta_N\neq 0\}
\]
lies in 
$
\mathsf{BS}
$
if and only if:
\[
\explain{recall~\thetag{\ref{M=(...)}}}
\qquad
\mathsf{M}\,
(z,\xi)
\in
\mathscr{M}_{2c+r}^N.
\eqno
\qed
\]
\end{Proposition}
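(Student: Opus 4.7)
My approach is to translate the vanishing condition $(z,[\xi])\in\mathsf{BS}$ into purely linear-algebraic rank conditions on the numerical matrix $\mathsf{M}(z,\xi)$, exploiting the determinantal shape of every form $\phi^\nu_{j_1,\dots,j_n}$ and $\psi^{\tau,\rho}_{j_1,\dots,j_n}$. The starting observation is that, over $\{\zeta_0\cdots\zeta_N\neq 0\}$, every dividing factor $\zeta_k^{\lambda_k-1}$ appearing in Proposition~\ref{the symmetric differential forms with many very ample line bundle} is invertible, so vanishing of each form at $(z,[\xi])$ is equivalent to the vanishing, at $(z,\xi)$, of the corresponding $N\times N$ minor of $\mathsf{K}^\nu(z,\xi)$ (resp.\ $\mathsf{K}^{\tau,\rho}(z,\xi)$).

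\textbf{Key steps.} First, because each row of $\mathsf{M}$ is literally the breakdown of $F_i$ (or $\dformal F_i$) into its $2N+2$ summands, the equations $F_i(z)=0$ for $i=1,\dots,c+r$ and $\dformal F_j(z,\xi)=0$ for $j=1,\dots,c$ --- which hold on ${}_{F_{c+1},\dots,F_{c+r}}\mathbb{P}_{F_1,\dots,F_c}$, the ambient scheme in which $\mathsf{BS}$ is defined --- are exactly the sum-of-columns identity~\thetag{\ref{sum-of-(2N+2)-columns=0}}. Second, the rank inequality~\thetag{\ref{(ii) of M^n_2c}} reads $\mathrm{rank}\,\mathsf{K}^\nu(z,\xi)\leqslant N-1$; in the direction ``$\mathscr{M}\Rightarrow\mathsf{BS}$'' this forces all $N\times N$ minors of $\mathsf{K}^\nu(z,\xi)$ to vanish, in particular those built from the first $c+r$ rows together with any $n$ of the last $c$ differential rows, which are precisely (up to nonvanishing factors) the values of $\phi^\nu_{j_1,\dots,j_n}(z,\xi)$. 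The same bookkeeping handles $\psi^{\tau,\rho}$ via condition~\thetag{\ref{(iii) of M^n_2c}} and $\mathsf{K}^{\tau,\rho}$.

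\textbf{Main obstacle.} The delicate direction is the converse ``$\mathsf{BS}\Rightarrow\mathscr{M}$'': showing that vanishing of \emph{only} the selected minors $\phi^\nu_{j_1,\dots,j_n}(z,\xi)$ over all $1\leqslant j_1<\cdots<j_n\leqslant c$ actually forces the full rank bound $\leqslant N-1$ on $\mathsf{K}^\nu(z,\xi)$ as a $(2c+r)\times(N+1)$ matrix. For generic parameters $A_\bullet^\bullet,\,M_\bullet^{\bullet;\bullet}$, I would first establish --- by a Bertini-type codimension count in the parameter space analogous to Proposition~\ref{Bertini-type assertion}, using the surjectivity of Proposition~\ref{surjectivity lemma} --- that at every point $(z,[\xi])$ with $\zeta_0(z)\cdots\zeta_N(z)\neq 0$ the upper $c+r$ rows of $\mathsf{K}^\nu(z,\xi)$ are linearly independent in $\mathbb{K}^{N+1}$. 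Granted this, the decomposition $\mathrm{rank}=\mathrm{rank}(\text{top block})+\mathrm{rank}(\text{bottom block mod top})$ reduces matters to the elementary assertion: if every $n$-subset of $c$ vectors in a quotient space has all $n\times n$ Pl\"ucker coordinates zero, then by a Steinitz exchange they jointly span a subspace of dimension $\leqslant n-1$, and therefore the total rank of $\mathsf{K}^\nu(z,\xi)$ is $\leqslant (c+r)+(n-1)=N-1$. The same argument applied to $\mathsf{K}^{\tau,\rho}$ closes the case of $\psi^{\tau,\rho}$ and completes the proof.
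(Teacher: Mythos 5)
Your reduction is the natural one, and most of it is sound: condition~\thetag{\ref{sum-of-(2N+2)-columns=0}} is exactly membership of $(z,[\xi])$ in ${}_{F_{c+1},\dots,F_{c+r}}\mathbb{P}_{F_1,\dots,F_c}$; over $\{\zeta_0\cdots\zeta_N\neq 0\}$ the dividing factors are units, so vanishing of $\phi^{\nu}_{j_1,\dots,j_n}$, $\psi^{\tau,\rho}_{j_1,\dots,j_n}$ at the point is the vanishing of the maximal minors of the row-selected submatrices of $\mathsf{K}^{\nu}(z,\xi)$, $\mathsf{K}^{\tau,\rho}(z,\xi)$; the implication from~\thetag{\ref{(ii) of M^n_2c}}--\thetag{\ref{(iii) of M^n_2c}} to $\mathsf{BS}$ is immediate; and your closing linear algebra (independent top block, plus ``every $n$ of the $c$ residual rows are dependent'', forces total rank $\leqslant N-1$) is correct. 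Note the paper itself gives no argument beyond a pointer to \cite[Proposition~9.3]{Xie-2015-arxiv}, so what you wrote is a reconstruction of that reasoning rather than a different route.

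The one genuine gap is the auxiliary nondegeneracy claim as you state it: that for generic parameters the top $c+r$ rows of $\mathsf{K}^{\nu}(z,\xi)$ are independent at \emph{every} point with $\zeta_0(z)\cdots\zeta_N(z)\neq 0$. For fixed such $z$ the evaluation map from the parameters onto the $(c+r)\times(N+1)$ top block is surjective, and the locus of such matrices of rank $\leqslant c+r-1$ has codimension $(N+1)-(c+r-1)=n+2$; sweeping $z$ over the $N$-dimensional $\mathbf{P}$ leaves parameter-codimension $\geqslant n+2-N=2-(c+r)\leqslant 0$ as soon as $c+r\geqslant 2$, so the proposed Bertini-type count does not close --- and indeed the rank-drop locus of the top block is a degeneracy locus of expected codimension $n+2\leqslant N$, which one cannot expect to avoid the nonvanishing part. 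The cure is that you only need the claim at points of ${}_{F_{c+1},\dots,F_{c+r}}\mathbb{P}_{F_1,\dots,F_c}$, since both $\mathsf{BS}$ and condition {\bf (i)} already force $z\in X$; and there the count does close, because on $X$ each row of the top block sums to zero (the row sums are exactly $F_1(z),\dots,F_{c+r}(z)$), so for fixed $z$ the relevant condition is ``row sums zero and rank $\leqslant c+r-1$'', of codimension $(c+r)+(n+1)=N+1>N=\dim\mathbf{P}$. So restate the lemma with ``$z\in X$'', run the incidence-variety count with the equations $F_i(z)=0$ included (row-by-row surjectivity as in Proposition~\ref{surjectivity lemma}), and do it simultaneously for the finitely many matrices $\mathsf{K}^{\nu}$, $\mathsf{K}^{\tau,\rho}$; with this correction your argument is complete. (In the limiting case $2c+r=N$ the nondegeneracy is not even needed, since a rank-deficient top block together with the $c=n$ bottom rows already gives total rank $\leqslant N-1$; genericity genuinely enters only when $2c+r>N$.)
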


Now, we introduce the engine of MCM
(slightly different from the original~\cite[Lemma 9.5]{Xie-2015-arxiv}):

\medskip
\noindent
{\bf Core Lemma.}
{\em \
For all positive integers $2\leqslant a \leqslant b$, there hold the codimension estimates:}
\[
\cdim\,
\mathscr{M}_{b}^a\,
\geqslant\,
a
+
b
-
1.
\eqno
\qed
\]

\smallskip
\noindent
{\em `Naive proof'.}\,
First of all, the equation~\thetag{\ref{sum-of-(2N+2)-columns=0}}
eliminates
the first variable column:
\[
\alpha_0\,
=\,
-\,
(
\alpha_1
+
\cdots
+
\alpha_{a}
+
\beta_0
+
\beta_1
+\cdots
+\beta_{a}
),
\]
so it contributes codimension value $b$.
Next, 
denoting
$
S_i
:=
\sum_{j=i}^a\,
\beta_j$
for
$i=0\cdots a$,
we may rewrite the restriction~\thetag{\ref{(ii) of M^n_2c}}
as:
\[
\rank_{\mathbb{K}}\,
\big\{
\alpha_0,\dots,\widehat{\alpha_{\nu}},\dots,\alpha_{a},\alpha_{\nu}+S_0
\big\}
\leqslant 
a-1
\qquad
{\scriptstyle (\nu\,=\,0\,\cdots\, a)}.
\]
By~\thetag{\ref{sum-of-(2N+2)-columns=0}},
the sum of all columns above vanishes, hence we 
can drop the first column and state it equivalently as:
\begin{equation}
\label{(ii)' of M^n_2c}
\rank_{\mathbb{K}}\,
\big\{
\alpha_1,\dots,\widehat{\alpha_{\nu}},\dots,\alpha_{a},\alpha_{\nu}+S_0
\big\}
\leqslant 
a-1
\qquad
{\scriptstyle (\nu\,=\,0\,\cdots\, a)}.
\end{equation}
Similarly, we can reformulate~\thetag{\ref{(iii) of M^n_2c}} equivalently as:
\begin{equation}
\label{(iii)' of M^n_2c}
\aligned
\rank_{\mathbb{K}}\,
\big\{
\alpha_1+S_1-S_2,\dots,\alpha_\tau+
S_{\tau}-S_{\tau+1},
&
\alpha_{\tau+1},\dots,\widehat{\alpha_\rho},\dots,\alpha_{a},
\alpha_\rho+S_{\tau+1}
\big\}
\leqslant 
a-1
\\
&
{\scriptstyle (\tau\,=\,0\,\cdots\, a-1,\,\,
\rho\,=\,\tau+1\,\cdots\, a)}.
\endaligned
\end{equation}
Observe in~\thetag{\ref{(ii)' of M^n_2c}},
\thetag{\ref{(iii)' of M^n_2c}} that
the variable columns
$S_0,\dots,S_{a}$ have distinct status,
and moreover that for $i=1\cdots a$, subsequently, each variable $S_i$ satisfies nontrivial new equations
involving only the former variables $\alpha_{\bullet}, S_1,\dots, S_{i-1}$.
Thus, the restrictions~\thetag{\ref{(ii)' of M^n_2c}},
\thetag{\ref{(iii)' of M^n_2c}} shoud contribute at least $a+1$ codimension value. Summarizing, we should have:
\[
\cdim\,
\mathscr{M}_{b}^a\,
\geqslant\,
b
+
(
a
+
1
)
\,
\geqslant\,
a+b-1.
\eqno
\qed
\]

\begin{Remark}
However, a rigorous proof
(cf.~\cite[Subsection~10.6]{Xie-2015-arxiv})
 is much more
demanding and delicate, because of the unexpected algebraic complexity behind (cf.~\cite[Subsection~10.7]{Xie-2015-arxiv}).
\end{Remark}

Thereby, 
we can exclude positive-dimensional base locus in Proposition~\ref{characterization-BS-for-nonvanishing-coordinates}.

\begin{Proposition}
\label{why the core lemma is essential}
For generic choices
of parameters $A_{\bullet}^{\bullet}, M_{\bullet}^{\bullet;\bullet}$,
the base locus over the `coordinates nonvanishing part':
\[
\mathsf{BS}
\setminus
\{\zeta_0\cdots \zeta_N\neq 0\}
\]
is discrete or empty.
\end{Proposition}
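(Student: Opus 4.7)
The plan is to carry out a standard universal-family dimension count, with Proposition~\ref{characterization-BS-for-nonvanishing-coordinates} providing the pointwise characterization of base points and the \textbf{Core Lemma} serving as the codimension engine. Let $\mathcal{P}$ denote the affine parameter space of all coefficients $(A_{\bullet}^{\bullet},M_{\bullet}^{\bullet;\bullet})$, and set
\[
W\,:=\,\big\{(z,[\xi])\in\mathbb{P}(\Omega_{\mathbf{P}})\,\colon\,\zeta_0(z)\cdots\zeta_N(z)\neq 0\big\}\ \subset\ \mathbb{P}(\Omega_{\mathbf{P}}),
\]
which is an open subscheme of dimension $2N-1$. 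I would first form the incidence variety
\[
\mathscr{I}\,:=\,\big\{(\theta,z,[\xi])\in\mathcal{P}\times W\,\colon\,\mathsf{M}_\theta(z,\xi)\in\mathscr{M}_{2c+r}^N\big\},
\]
equipped with its two natural projections $\pi_1\colon\mathscr{I}\to\mathcal{P}$ and $\pi_2\colon\mathscr{I}\to W$. By Proposition~\ref{characterization-BS-for-nonvanishing-coordinates}, the fiber $\pi_1^{-1}(\theta)$ is exactly the base locus $\mathsf{BS}_\theta\cap W$ for the parameter $\theta$, so it is enough to establish $\dim\mathscr{I}\leqslant\dim\mathcal{P}$: a generic fiber of $\pi_1$ will then be at most $0$-dimensional.

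The core step is to bound the fibers of $\pi_2$ through the evaluation map
\[
\mathsf{ev}_{z,\xi}\colon\ \mathcal{P}\ \longrightarrow\ \mathsf{Mat}_{(2c+r)\times(2N+2)}(\mathbb{K}),\quad\theta\ \longmapsto\ \mathsf{M}_\theta(z,\xi).
\]
For each fixed $(z,[\xi])\in W$, I plan to show that $\mathsf{ev}_{z,\xi}$ is a \emph{surjective} $\mathbb{K}$-linear map. Indeed, the coefficient $A_i^j\in\mathsf{H}^0(\mathbf{P},\mathcal{A}_i\otimes\mathcal{L}^{\epsilon_i})$ enters only into the $\mathsf{A}_i^j$-entry of $\mathsf{M}$, while each $M_i^{0,\dots,N;k}$ enters only into the $\mathsf{B}_i^k$-entry. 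Since the multipliers $\zeta_j^d$ and $\zeta_0^{\mu_{N,k}}\cdots\widehat{\zeta_k^{\mu_{N,k}}}\cdots\zeta_N^{\mu_{N,k}}\zeta_k^{d-N\mu_{N,k}}$ are invertible at every $z\in W$, Proposition~\ref{surjectivity lemma}, applied with the very ample $\mathcal{A}_i\otimes\mathcal{L}^{\epsilon_i}$, lets me independently prescribe both the value and the $\xi$-differential of each such entry for $i=1,\dots,c$; for $i=c+1,\dots,c+r$ the usual very-ampleness surjectivity prescribes the value. As distinct matrix slots use disjoint families of coefficients, these choices are mutually independent, yielding surjectivity onto the ambient matrix space.

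Since $\mathsf{ev}_{z,\xi}$ is surjective and linear, the fiber $\pi_2^{-1}(z,[\xi])=\mathsf{ev}_{z,\xi}^{-1}(\mathscr{M}_{2c+r}^N)$ has codimension in $\mathcal{P}$ equal to $\cdim_{\mathsf{Mat}}\mathscr{M}_{2c+r}^N$, which by the Core Lemma is at least $N+(2c+r)-1\geqslant 2N-1$ thanks to the standing hypothesis $2c+r\geqslant N$. Combined with $\dim W=2N-1$, the fiber-dimension inequality yields
\[
\dim\mathscr{I}\ \leqslant\ \dim\mathcal{P}+\dim W-(2N-1)\,=\,\dim\mathcal{P},
\]
so the generic fiber of $\pi_1$ has dimension $\leqslant 0$. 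In other words, for generic $\theta\in\mathcal{P}$ the set $\mathsf{BS}\cap W$ is discrete (or empty), which is the desired conclusion.

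The hard part will not be the dimension count above, which is formal once the codimension estimate is in hand, but rather the Core Lemma itself: as the Remark following the `Naive Proof' warns, turning the heuristic successive-elimination count into a genuine bound requires the delicate matrix analysis carried out in~\cite[Subsections 10.6--10.7]{Xie-2015-arxiv}, because the rank restrictions~\thetag{\ref{(ii) of M^n_2c}}--\thetag{\ref{(iii) of M^n_2c}} interact in nontrivial algebraic ways. Once that estimate is granted, the transfer from the matrix space to the parameter space $\mathcal{P}$ is purely formal via the surjective $\mathsf{ev}_{z,\xi}$, and the proposition follows.
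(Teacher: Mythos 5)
Your proposal is correct and follows essentially the same route as the paper: the paper proves this proposition by repeating the incidence-variety argument of Proposition~\ref{Bertini-type assertion} (surjectivity of the evaluation map at each point of the nonvanishing locus via Proposition~\ref{surjectivity lemma}, then a fiber-dimension count), with the underlined codimension equality there replaced by the Core Lemma bound $\cdim\,\mathscr{M}_{2c+r}^N\geqslant N+2c+r-1\geqslant 2N-1=\dim\mathbb{P}(\Omega_{\mathbf{P}})$, exactly as you do. Your write-up merely makes explicit the details the paper delegates to~\cite[Propositions 9.6, 9.7]{Xie-2015-arxiv}.
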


\begin{proof}
The proof goes much the same as that of Proposition~\ref{Bertini-type assertion},
in which the underlined codimension estimate is replaced by:
\[
\aligned
\cdim\,\mathscr{M}_{2c+r}^N\,
&
\geqslant\,
N+2c+r-1
\qquad
\explain{by the Core Lemma}
\\
\explain{use $2c+r\geqslant N$}
\qquad
&
\geqslant\,
N+N-1
\\
\explain{exercise}
\qquad
&
=\,
\dim\,
\mathbb{P}(\Omega_{\mathbf{P}})
\\
\explain{\smiley}
\qquad
&
=\,
\dim\,
\big(
\mathbb{P}(\Omega_{\mathbf{P}})
\setminus
\{\zeta_0\cdots \zeta_N\neq 0\}
\big).
\endaligned
\]
For the remaining details,
we refer the reader to~\cite[Propositions 9.6, 9.7]{Xie-2015-arxiv}.
\end{proof}

This is exactly the first emptiness assertion on the base loci in Subsection~\ref{subsection: Controlling the base loci}.
By much the same reasoning, we can also establish
the second one there 
(cf.~\cite[Proposition 9.11]{Xie-2015-arxiv}).


\end{document}